\definecolor{mygreen}{RGB}{28,172,0} % color values Red, Green, Blue
\definecolor{mylilas}{RGB}{170,55,241}
\newtheorem{theorem}{Theorem}[section]
\newtheorem{lemma}[theorem]{Lemma}
\newtheorem{oss}[theorem]{Remark}
\newtheorem{prop}[theorem]{Proposition}
\newtheorem{definizione}[theorem]{Definition}
\newtheorem{cor}[theorem]{Corollary}
\author{Sara \textsc{Fraschini}} % Your name, this is used in the title page and abstract, print it elsewhere with \authorname
\keywords{} % Keywords for your thesis, this is not currently used anywhere in the template, print it elsewhere with \keywordnames
\begin{document}

\frontmatter % Use roman page numbering style (i, ii, iii, iv...) for the pre-content pages

\pagestyle{plain} % Default to the plain heading style until the thesis style is called for the body content

%----------------------------------------------------------------------------------------
%	TITLE PAGE
%----------------------------------------------------------------------------------------

\begin{titlepage}
\begin{center}

\vspace*{.06\textheight}
{\scshape\LARGE \univname \par}\vspace{1.5cm} % University name
\centering
\includegraphics[width=0.3\textwidth]{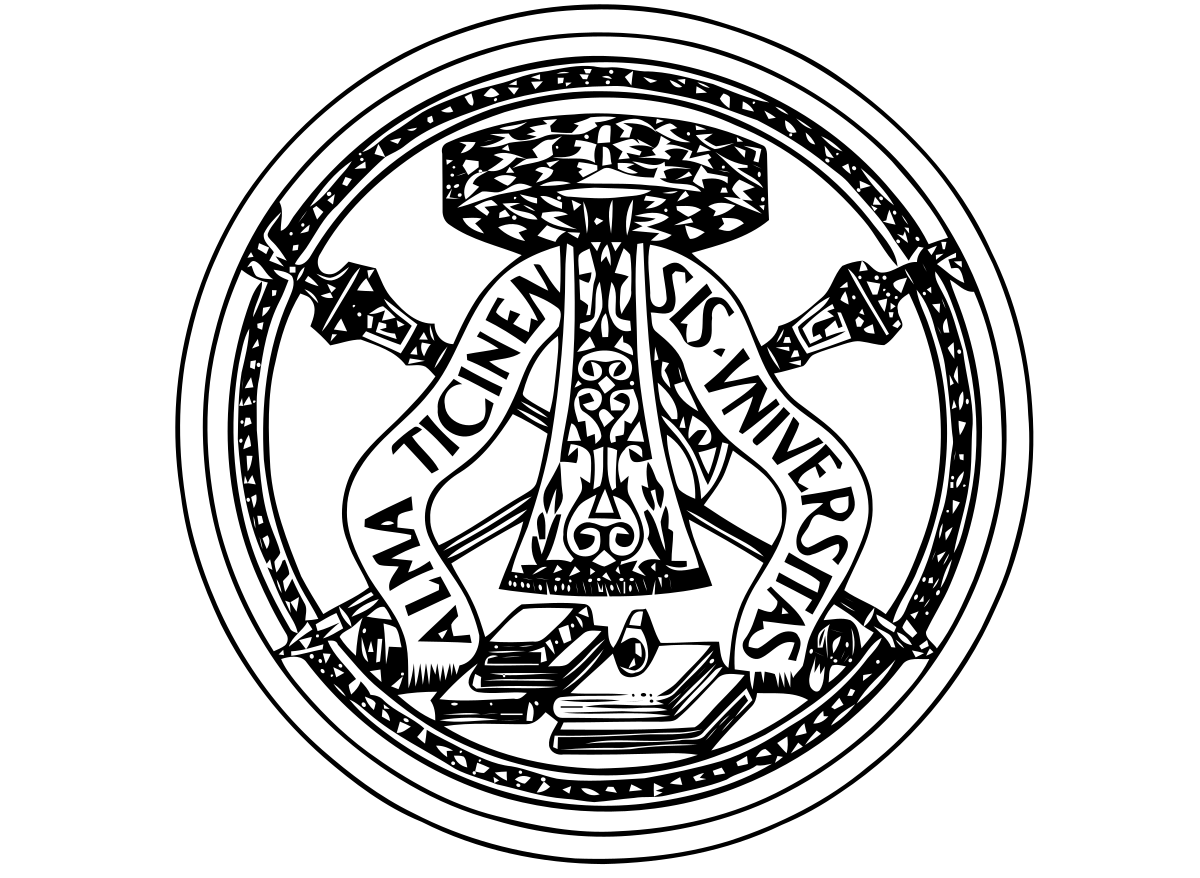}\\
\vspace{1cm}
\textsc{\Large Laurea Magistrale in Matematica}\\[0.5cm] % Thesis type
\textsc{\small (Master's Thesis in Mathematics)}\\[0.5cm] % Thesis type
\HRule \\[0.4cm] % Horizontal line
{\LARGE \bfseries \ttitle \par}\vspace{0.4cm} % Thesis title
\HRule \\[1.5cm] % Horizontal line
 
\begin{minipage}[t]{0.4\textwidth}
\begin{flushleft} \large
\emph{Author:}\\
{\authorname} % Author name - remove the \href bracket to remove the link
\end{flushleft}
\end{minipage}
\begin{minipage}[t]{0.4\textwidth}
\begin{flushright} \large
\emph{Supervisors:} \\
 \href{https://euler.unipv.it/moiola/} {Andrea \textsc{Moiola}} % Supervisor name - remove the \href bracket to remove the link  
 \href{http://euler.unipv.it/sangalli/}{Giancarlo \textsc{Sangalli}}
\end{flushright}
\end{minipage}\\[3cm]
 
\vfill

\large
\deptname \\[2cm] % Research group name and department name
 
\vfill

{\large July 20, 2021}\\[4cm] % Date

\vfill
\end{center}
\end{titlepage}

\cleardoublepage

%----------------------------------------------------------------------------------------
%	QUOTATION PAGE
%----------------------------------------------------------------------------------------

%\vspace*{0.2\textheight}

%\noindent\enquote{\itshape Thanks to my solid academic training, today I can write hundreds of words on virtually any topic without possessing a shred of information, which is how I got a good job in journalism.}\bigbreak

%\hfill Sara Faschini

%----------------------------------------------------------------------------------------

%	DEDICATION
%----------------------------------------------------------------------------------------

\dedicatory{}
\vspace{-28cm}
\begin{flushright}
	\textit{Alla mia famiglia}
\end{flushright}

%----------------------------------------------------------------------------------------
%	ABSTRACT PAGE
%----------------------------------------------------------------------------------------

\begin{abstract}
\addchaptertocentry{\abstractname} % Add the abstract to the table of contents
The goal of this thesis is to investigate the first steps towards an unconditionally stable space-time isogeometric method (IGA method) with maximal regularity, using a tensor-product approach, for the homogeneous Dirichlet problem for the second-order wave equation. 

The unconditional stability of space-time discretization for wave propagation problems is a topic of significant and recent interest, by virtue of the advantages of space-time methods when compared with more standard space discretization plus time-stepping. These methods gained an increasing interest only recently, due to the improvement in computer technology, which is now capable of handling algorithms with significantly high computational costs. However, it is important to find efficient solvers for such problems: this is another interesting challenge together with the establishment of unconditional stability.

In the case of classic Galerkin continuous finite element methods (FEM), several stabilizations have been proposed, most of them relying only on \textit{heuristics}, without actually proving their effectiveness. Inspired by one of these works, we address the stability problem by studying the IGA method for an ordinary differential equation (ODE) closely related to the wave equation.
 
We study the conditioned stability of the IGA discretization for the following problem 
\begin{equation*}
\partial_{tt}u(t)+ \mu u(t)=f(t), \quad \text{for} \ t \in (0,T), \quad u(0)=\partial_{t}u(t)_{|t=0}=0,
\end{equation*}
where $T>0$ is the final instant of time and $\mu >0$ is the square of the wave-number. Eventually, we propose a stabilization, whose good behaviour is supported by numerical tests, and a possible extension to the wave-equation is suggested.

\end{abstract}

%----------------------------------------------------------------------------------------

%----------------------------------------------------------------------------------------
%	LIST OF CONTENTS/FIGURES/TABLES PAGES
%----------------------------------------------------------------------------------------

\tableofcontents % Prints the main table of contents

\mainmatter % Begin numeric (1,2,3...) page numbering

\pagestyle{thesis} % Return the page headers back to the "thesis" style

% Include the chapters of the thesis as separate files from the Chapters folder
% Uncomment the lines as you write the chapters

% Chapter 1

\chapter{Introduction} % Main chapter title

\label{Chapter1} % For referencing the chapter elsewhere, use \ref{Chapter1} 

%----------------------------------------------------------------------------------------

% Define some commands to keep the formatting separated from the content 
\newcommand{\keyword}[1]{\textbf{#1}}
\newcommand{\tabhead}[1]{\textbf{#1}}
\newcommand{\code}[1]{\texttt{#1}}
\newcommand{\file}[1]{\texttt{\bfseries#1}}
\newcommand{\option}[1]{\texttt{\itshape#1}}

%----------------------------------------------------------------------------------------

The most widely used approaches for the numerical solution of time-de\-pen\-dent linear partial differential equations are based on semi-discretization in space and time, using a time-stepping approach: at each fixed discrete time instant, a corresponding discretization in space is considered. The proposed methodology is applied to parabolic differential equations and to hyperbolic problems in Chapter 6 and Chapter 8 of (\cite{quarteroni2009modellistica}). A possible alternative is the simultaneous discretization in space and time, i.e., a space-time discretization. Space-time methods for time-dependent linear partial differential equations present some advantages when compared with more standard space discretization plus time-stepping. For example, the approximate solutions are available at all times in the interval of interest and it could also be possible to use techniques that are already been used for elliptical problems, such as multigrid methods (\cite{multigrid}) and domain decomposition (Chapter 14 of \cite{quarteroni2009modellistica}), which may allow parallelisation in time (\cite{timeparallel}). A possible drawback is that a global linear system must be solved at once. Therefore, fast solvers and preconditioning become essential. For instance, in the case of space-time isogeometric discretization for parabolic problems see (\cite{parabsang}).

Let us now consider the homogeneous Dirichlet problem for the second-order wave equation:
\begin{equation}\label{eqonde}
\begin{cases}
\partial_{tt}u(x,t)-\Delta_xu(x,t)=g(x,t) \quad (x,t) \in \Omega \times (0,T)\\
u(x,t)=0 \quad (x,t) \in \partial{\Omega} \times [0,T]\\
u(x,0)=\partial_tu(x,t)_{|t=0}=0 \quad x \in \Omega,
\end{cases}
\end{equation}
where $\Omega \subset \mathbb{R}^d$, with $d=1,2,3$, is an open bounded Lipschitz domain and, for a real value $T>0$, $(0,T)$ is a time interval. In (\cite{Coercive}) the authors introduce a space-time variational formulation of \eqref{eqonde}, where integration by parts is also applied with respect to the time variable, and the classic anisotropic Sobolev spaces with homogeneous initial and boundary conditions are employed. Stability of a conforming tensor-product space-time discretization of this variational formulation with piecewise polynomial, continuous solution and test functions, requires a Courant – Friedrichs – Lewy (CFL) condition, i.e., 
\begin{equation}\label{CFL}
h_t \leq C h_x,
\end{equation}
with a constant $C > 0$, depending on the constant of a spatial inverse inequality, where $h_t$ and $h_x$ are the (uniform) mesh-sizes in time and space, e.g, see (\cite{Steinbach2019, Zank2020}, for the piecewise linear case). The CFL condition is necessary for convergence while solving certain partial differential equations (usually hyperbolic PDEs) numerically. It arises in the numerical analysis of explicit time integration schemes. According to that, the time step must be less than a certain value, otherwise the simulation produces incorrect results. The condition is named after Richard Courant, Kurt Friedrichs, and Hans Lewy who described it in (\cite{CFL}).

Several approaches have been proposed in order to overcome restriction \eqref{CFL}. In (\cite{Steinbach2019}) the authors, following the work of (\cite{zlotnik}), introduce a perturbation of the tensor-product space-time piecewise linear discretization of \eqref{eqonde}. As a result, they can prove unconditional stability and optimal convergence rates in space-time norms. In particular, they start by considering the ordinary differential equation 
\begin{equation}\label{ode intro}
\partial_{tt}u(t)+ \mu u(t)=f(t), \quad \text{for} \ t \in (0,T), \quad u(0)=\partial_{t}u(t)_{|t=0}=0,
\end{equation}
where $\mu >0$, and its piecewise linear finite element discretization, since its stability is linked to the stability of the space-time standard FEM discretization of \eqref{eqonde}. They perturb the conforming discrete bilinear form, by considering the $L^2$ orthogonal projection on the piecewise constant finite element space. As a consequence, the new discrete system is unconditionally stable and convergence results hold without restrictions on the mesh-size. Finally, they extend this stabilization to the discretization for the wave-equation \eqref{eqonde}. In (\cite{higherorderzank}), M. Zank generalises this stabilization idea to an arbitrary polynomial degree with global continuity. In particular, he provides numerical examples for a one-dimensional spatial domain,
where the unconditional stability and optimal convergence rates in space-time norms are shown. On the other hand, theoretical considerations showing that such stabilization works are left to future papers. 

In (\cite{löscher2021numerical}) the authors consider a suitable linear transformation that defines an isomorphism between the anisotropic solution and test spaces. In this way they are able to define a Galerkin-Bubnov formulation that is unconditionally stable without further perturbations. In particular, the operator they use is the modified Hilbert transformation introduced in (\cite{Coercive, SteinbachZanknote, Zankexact}). However, in (\cite{löscher2021numerical}), they only give numerical
examples for a one- and a two-dimensional spatial domain, where the unconditional stability and optimal convergence rates in space-time norms are illustrated, and theoretical results are left to a future work. 

As it is proven in (\cite{Zank2020}), although the variational formulation to find the weak solution of \eqref{eqonde} in a suitable anisotropic Sobolev subspace $W$ of $H^1(\Omega \times (0,T))$ is well–defined for the right-hand-side $g$ being in the dual of the anisotropic Sobolev test space $V$, it is not possible to establish unique solvability. Indeed, the \textit{solution-to-data} operator between $W$ and $V'$ is not bijective. This is due to the fact that a stability condition, with respect to the dual norm of the right-hand-side, is not satisfied, see Theorem 4.2.24 of (\cite{Zank2020}). As a consequence, by the \textit{bounded inverse Theorem} (or \textit{inverse mapping Theorem}), the \textit{solution-to-data} linear map cannot be bijective. To ensure existence and uniqueness of a weak solution, we need to assume that $g \in L^2(\Omega \times (0,T))$. This is a
standard assumption to ensure sufficient regularity for the weak solution, and therefore, to obtain linear convergence for piecewise linear finite element approximations, but, as observed before, stability of common finite element discretizations require some CFL condition. In (\cite{steinbach2021generalized}) the authors introduce a new variational setting by enlarging the solution space. In this new framework they can prove that the \textit{solution-to-data} linear map is an isomorphism. Based on these results, they aim to derive a space–time finite element method for the numerical solution of the wave equation that is unconditionally stable. \\ \bigskip

The goal of this thesis is to investigate the first steps towards an unconditionally stable space-time isogeometric method with maximal regularity, using a tensor-product approach, for the wave problem \eqref{eqonde}. In particular, following (\cite{Steinbach2019}), the starting point is the analysis and the stabilization of the conforming discretization of \eqref{ode intro}. 

The choice of isogeometric methods can be advantageous due to the high degree of approximation of B-spline technology (\cite{HUGHES20084104, n-width}) and due to the exact representation of the geometry with non uniform rational B-splines (NURBS), which simplifies mesh refinement, as further communications with CAD are not necessary (\cite{isogeoCAD}). In particular, the choice of isogeometric methods with maximal regularity can be advantageous in the case of wave propagation problems to tackle the so-called \textit{pollution-effect}, which occurs in high-frequency wave problems (\cite{Babuska2000}). Indeed, a typical solution is to raise the order of the method: for the same number of degrees of freedom, methods that use piecewise polynomials of higher degree and regularity should perform better (\cite{HUGHES20084104}). Therefore, the choice of the IGA method with maximal regularity seems to be particularly suitable for this type of problems.

\subsubsection{Outline}
The rest of this thesis is organised as follows: in Chapter \ref{Chapter2} Sobolev spaces, spline spaces and variational methods are fixed and their most important properties are repeated. In Chapter \ref{Chapter3} the quadratic isogeometric method with maximal regularity for the ODE \eqref{ode intro} is investigated. In Chapter \ref{Chapter4} some numerical results are shown. In Chapter \ref{Chapter5} a short summary of the thesis and some suggestions for future work are given.
% Chapter 2

\chapter{Preliminaries} % Main chapter title

\label{Chapter2} % For referencing the chapter elsewhere, use \ref{Chapter2} 

%----------------------------------------------------------------------------------------

% Define some commands to keep the formatting separated from the content 

%---------------------------------------------------------------------------------------- 

In this Chapter we present notations for spaces with their properties and a general variational setting, recalling the Banach-Ne$\breve{c}$as-Babu$\breve{s}$ka Theorem and the Lax-Milgram Lemma.

All the normed spaces that we will consider in the thesis will be real vector spaces. Hence, we are not going to specify this, except for the cases where it is preferable to underline.

In the whole work, for a real value $T>0$, $(0,T)$ is a time interval. 

\section{Sobolev spaces in $(0,T)$}\label{sec sobolev}
In this section we recall some useful Sobolev spaces that we will consider from now on.

With the usual notations, for $p \in \mathbb{N}$, the Hilbert space $H^{p}(0,T)$ is the Sobolev space of (classes of) real-valued functions endowed with the inner product $\langle \cdot, \cdot \rangle_{H^p(0,T)}$ and the induced norm $||\cdot||_{H^p(0,T)}$, i.e.
\begin{gather*}
	H^p(0,T):=\{u \in L^2(0,T): \ \partial^m_t u \in L^2(0,T) \ \forall \ 0\leq m \leq p \},\\
	\langle u,v \rangle_{H^p(0,T)}:=\int_{0}^{T} u(t)v(t) \ dt + \sum_{m=1}^{p} \int_0^T \partial^m_t u(t)\partial^m_t v(t) \ dt, \quad u,v \in H^p(0,T).
\end{gather*}
Also, we will consider the seminorm
\begin{equation*}
	|u|_{H^p(0,T)}:=\sqrt{\int_0^T (\partial^p_t u(t))^2 \ dt}, \quad u \in H^p(0,T).
\end{equation*}

Since $H^1(0,T) \subset AC([0,T])$, the space of absolutely continuous functions, see (\cite{Brezis2011}), we can define the following subspaces of $H^1(0,T)$ 
\begin{gather*}
	H^1_{0,*}(0,T)=\{u \in H^1(0,T): \ u(0)=0\},\\
	H^1_{*,0}(0,T)=\{u \in H^1(0,T): \ u(T)=0\},
\end{gather*}
endowed with the Hilbertian norm
\begin{gather*}
	\|u\|_{H^1_{0,*}(0,T)}:=|u|_{H^1(0,T)}=\|\partial_t u\|_{L^2(0,T)},\\
	\|v\|_{H^1_{*,0}(0,T)}:=|v|_{H^1(0,T)}=\|\partial_t v\|_{L^2(0,T)},
\end{gather*}
for $u \in H^1_{0,*}(0,T)$ and for $v \in H^1_{*,0}(0,T)$.

Clearly, for $1 \leq p \leq q$, $H^q(0,T) \subset H^p(0,T)$ with a continuous embedding, thus we can also define the following spaces
\begin{gather*}
H^p_{0,*}(0,T)=\{u \in H^p(0,T): \ u(0)=0\},\\
H^p_{*,0}(0,T)=\{u \in H^p(0,T): \ u(T)=0\}.
\end{gather*}

Let $\delta \in \mathbb{R}$, with $\delta >T$. Recalling that the space $C^{\infty}_{c}(0,\delta)$ is dense in $H^1_0(0,\delta)$, the set
\begin{equation*}
	C^{\infty}_c(0,T]:=\{\phi_{|(0,T]}: \ \phi \in C^{\infty}_{c}(0,\delta)\}
\end{equation*}
is dense in $H^1_{0,*}(0,T)$. Let $\eta \in \mathbb{R}$, with $\eta<0$. Analogously,  the set 
\begin{equation*}
	C^{\infty}_c[0,T):=\{\phi_{|[0,T)}: \ \phi \in C^{\infty}_{c}(-\eta,T)\}
\end{equation*}
is dense in $H^1_{*,0}(0,T)$.

In $H^1_{0,*}(0,T)$ and $H^1_{*,0}(0,T)$ there hold inequalities of Poincaré type with sharp constants, see Lemma 3.4.5 of (\cite{Zank2020}), i.e., for all $u \in H^1_{0,*}(0,T)$ and $v \in H^1_{*,0}(0,T)$, there hold
\begin{equation}\label{Poinc}
	\|u\|_{L^2(0,T)} \leq \frac{2T}{\pi} \|\partial_{t}u\|_{L^2(0,T)} \quad \text{and} \quad \|v\|_{L^2(0,T)} \leq \frac{2T}{\pi} \|\partial_{t}v\|_{L^2(0,T)},
\end{equation}
and the constants in these inequalities are sharp. Thus, $\|\cdot\|_{H^1(0,T)}$ and \\ $|\cdot|_{H^1(0,T)}$ are equivalent Hilbertian norm in $H^1_{0,*}(0,T)$ and in $H^1_{*,0}(0,T)$.

\begin{oss}
	The estimates \eqref{Poinc} are more accurate (by a factor of about $0.1$) than those proposed in (\cite{Steinbach2019}). Therefore, the estimate proven by using \eqref{Poinc} are slightly different from those in (\cite{Steinbach2019}).  
\end{oss}

The dual spaces $[H^1_{0,*}(0,T)]'$ and $[H^1_{*,0}(0,T)]'$ are Hilbert spaces, and they can be characterised as completions of $L^2(0,T)$ with respect to their dual Hilbertian norm, see (\cite{wloka1987partial}).\\ \bigskip

\section{Spline spaces over a real  interval}\label{sec splines} 
In this Chapter we will give a brief overview of B-splines and of the most common spline spaces, all seen in the simple case of one-dimensional domains, which are the subjects of interest of Chapter \ref{ch 3}. Our main references are (\cite{de1978practical, schumaker2007spline, Hllig2013ApproximationAM}) and Chapter 1 of (\cite{chapter1}). Note that there is a lot of literature on splines, given to the fact that they have application in several branches of the sciences.

First of all, let us remark that splines, in one- or plus- dimensional cases, in the broad sense of the term, are functions consisting of pieces of smooth functions glued together in a certain smooth way. The most popular species is the one where the pieces
are algebraic polynomials and inter-smoothness is imposed by means of equality of derivatives up to a given order. This species is the one of interest for isogeometric analysis, and is therefore the one we will consider in this Chapter in the one-dimensional case.

\subsubsection{Univariate B-splines}
The concept of knot vector is essential to define univariate B-splines.
\begin{definizione}
	A knot vector (or knot sequence) $\Xi$ is a nondecreasing sequence of real numbers,
	\begin{equation*}
	\Xi := \{\xi_i\}_{i=1}^m=\{\xi_1\leq \ldots \leq \xi_m\}, \quad m \in \mathbb{N}_{>0}.
	\end{equation*} 
The elements $\xi_i$ are called knots and, if $\xi_1 \neq \xi_m$, the nondecreasing distinct knots are called break points.
\end{definizione}

Provided that $m \geq p+2$, we can define univariate B-splines of degree $p$ over the knot vector $\Xi$.

\begin{definizione}
	Suppose for a nonnegative integer $p$ and some integer $j$ that $\xi_j \leq \ldots \leq \xi_{j+p+1}$ are $p+2$ real numbers taken from a knot vector $\Xi$. The $j$-th B-spline of degree $p$ is defined recursively by
	\begin{equation}\label{cox de boor}
	b_{j,p,\Xi}(x):=\begin{cases} \frac{x-\xi_j}{\xi_{j+p}-\xi_j} b_{j,p-1,\Xi}(x)+\frac{\xi_{j+p+1}-x}{\xi_{j+p+1}-\xi_{j+1}}b_{j+1,p-1,\xi}(x) \quad x \in [\xi_j,\xi_{j+p+1})\\
	0 \quad \text{otherwise}
	\end{cases}
	\end{equation}
	starting with
	\begin{equation*}
	b_{i,0,\Xi}(x):=\begin{cases}
	1 \quad x \in [\xi_i,\xi_{i+1}),\\
	0 \quad \text{otherwise},
	\end{cases}
	\end{equation*}
	where we adopt the convention $0/0=0$.
\end{definizione}
Formula \eqref{cox de boor} is known as \textit{Cox-de Boor recursion formula}. \\ \bigskip 

Let us underline the following properties. For a proof and for more details on B-splines properties we refer to Chapter 1 of (\cite{chapter1})
\begin{itemize}
	\item For degree $0$ the B-spline $B_{i,0,\Xi}$ is simply the characteristic function of the half open interval  $[\xi_i;\xi_{i+1})$, if $\xi_{i} \neq \xi_{i+1}$.
	\item A B-spline is right-continuous.
	\item A B-spline is locally supported on the interval given by the extreme
	knots used in its definition, i.e., supp($b_{j,p,\Xi}$) $\subseteq [\xi_j,\xi_{j+p+1}]$ is a compact subinterval of $[\xi_j,\xi_{j+p+1}]$.
	\item A B-spline is nonnegative everywhere, and positive in the interior part of its
	support (if $\xi_{j+p+1} \neq \xi_j$), i.e.,
	\begin{gather*}
	b_{j,p,\Xi}(x) \geq 0 \quad \forall x \in \mathbb{R},\\
	b_{j,p,\Xi}(x) > 0 \quad \forall x \in (\xi_j,\xi_{j+p+1}).
	\end{gather*}
	\item A B-spline $b_{j,p,\Xi}$ has a piecewise polynomial structure of degree less then or equal to $p$ over the subintervals defined by the knots.
	\item We say that a knot has multiplicity $m$ if it occurs exactly $m$ times in the knot sequence. If $\xi$ is a knot of $b_{j,p,\Xi}$ of multiplicity $m \leq p+1$, then 
	\begin{equation*}
	b_{j,p,\Xi} \in C^{p-m}(\xi),
	\end{equation*}
	i.e., its derivatives of order $0,\ldots,p-m$ are continuous at $\xi$, and $C^{-1}(\xi)$ denotes discontinuity at $\xi$. In particular, the maximal regularity at knots is $C^{p-1}$.
\end{itemize}

\begin{figure}[h]	
	\centering
		\includegraphics[scale=0.65]{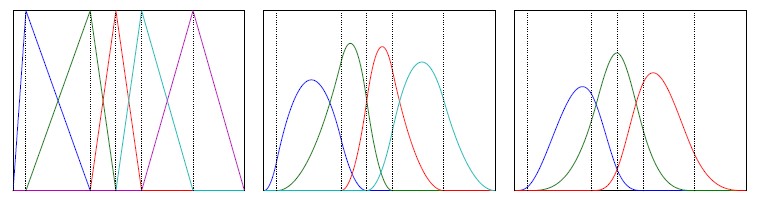}
		\caption{Several examples of B-splines of degree $p = 1,2,3$ respectively. The knot positions are visualized by vertical dotted lines. The same knot vector is chosen for the different degrees, with only simple knots, i.e., with knots with multiplicity equal to 1.}
\end{figure}

\subsubsection{Spline spaces}

Let $n>p \geq 0$ be two integers and let 
\begin{equation}\label{knot vector basis}
\Xi := \{\xi_i\}_{i=1}^{n+p+1}=\{\xi_1\leq \ldots \leq \xi_{n+p+1}\},
\end{equation}
be a given knot vector. This knot sequence allows us to define a set of $n$ B-splines of degree $p$, i.e.,
\begin{equation*}
\{ b_{1,p,\Xi}, \ldots, b_{n,p,\Xi}\}.
\end{equation*}

We are now interested in considering a family of B-splines which is a basis of the space of piecewise polynomials of degree $p$ on the intervals defined by the break points of the knot vector $\Xi$. Since the B-splines we are going to consider are restricted to the interval $[\xi_{p+1},\xi_{n+1}]$, we define the B-splines to be left continuous at the right endpoint, in order to avoid an asymmetry. Namely, we require that its value at $\xi_{n+1}$ is obtained by taking limits from the left:
\begin{equation*}
b_{j,p,\Xi}(\xi_{n+1}):=\lim_{x \rightarrow \xi_{n+1}^{-}} b_{j,p,\Xi}(x), \quad j=1,\ldots,n.
\end{equation*}

\begin{definizione}
The knot vector \eqref{knot vector basis} is said to be open on an interval $[a,b]$ if it satisfies
\begin{equation*}
a=\xi_1=\ldots=\xi_{p+1} < \xi_{p+2} \leq \ldots \leq \xi_n < \xi_{n+1} = \ldots = \xi_{n+p+1}=b.
\end{equation*}
\end{definizione}
	From now on we will simply say \textit{open knot vector} without specifying the defining range $[a,b]$, except in cases where we are interested in underling the defining domain.
\begin{oss}
	The B-splines $\{ b_{1,p,\Xi}, \ldots, b_{n,p,\Xi}\}$ defined by an open knot vector $\Xi$ satisfy the following properties:
	\begin{itemize}
		\item \textbf{Partition of unity}: 
		\begin{equation*}
		\sum_{i=1}^n b_{i,p,\Xi}(x)=1, \quad \forall x \in [\xi_{p+1},\xi_{n+1}].
		\end{equation*}
		\item \textbf{Interpolation property}: $b_{1,p,\Xi}$ and $b_{n,p,\Xi}$ are interpolatory at $\xi_{p+1}$ and $\xi_{n+1}$ respectively, i.e.,
		\begin{gather*}
		b_{1,p,\Xi}(\xi_{p+1})=1,\\
		b_{n,p,\Xi}(\xi_{n+1})=1.
		\end{gather*} 
		\item \textbf{Linear independence}: they are linearly independent on $[\xi_{p+1},\xi_{n+1}]$.
	\end{itemize}		 
\end{oss}

\begin{figure}[h]	
	\centering
	\includegraphics[scale=0.65]{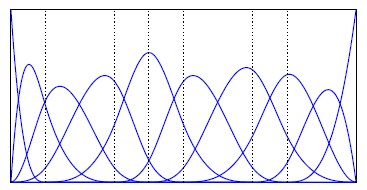}
	\caption{The B-spline basis of degree $p = 3$ on an open knot sequence. The knot positions are visualized by vertical dotted lines. The interior knots are simple, i.e., their multiplicity is equal to 1.}
\end{figure}

Let $\Delta$ be a sequence of real numbers:
\begin{equation}\label{delta}
\Delta:=\{\eta_0 < \eta_1 < \ldots < \eta_{l+1}\}.
\end{equation}
Furthermore, let $\mathbf{r}:=(r_1,\ldots,r_l)$ be a vector of integers such that $-1 \leq r_i \leq p-1$ for $i = 1,\ldots,l$. The space $S_p^{\mathbf{r}}(\Delta)$ of piecewise
polynomials of degree p with smoothness $\mathbf{r}$ over the partition \eqref{delta} is defined by
\begin{equation}\label{pol tratti}
\begin{split}
S_p^{\mathbf{r}}(\Delta):= \{ s: [\eta_0, \eta_{l+1}] \rightarrow \mathbb{R} \ | \ s \in \mathbb{P}_p([\eta_i, \eta_{i+1})) \ i=0,\ldots,l-1&, \ \\s \in \mathbb{P}_p([\eta_l, \eta_{l+1}]), \ s \in C^{r_i}(\eta_i)  \ i =1,\ldots,l \}.
\end{split}
\end{equation}

\begin{theorem}[\cite{chapter1}]
	The piecewise polynomial space \eqref{pol tratti} is characterized in terms of B-splines by
	\begin{equation}\label{spline space}
	S_p^{\mathbf{r}}(\Delta)= \text{span}\{b_{i,p,\Xi}\}_{i=1}^n,
	\end{equation}
	where $\Xi:=\{\xi_1\leq \ldots \leq \xi_{n+p+1}\}$ is an open knot sequence with $n:=\text{dim}(S_p^{\mathbf{r}}(\Delta))$ such that 
	\begin{equation*}
	\xi_1=\ldots=\xi_{p+1}:=\eta_0, \quad \xi_{n+1}=\ldots=\xi_{n+p+1}:=\eta_{l+1},
	\end{equation*}
	and
	\begin{equation*}
	\xi_{p+2},\ldots,\xi_{n}:=\overset{p-r_1}{\overbrace{\eta_1,\ldots,\eta_1}}, \ldots, \overset{p-r_l}{\overbrace{\eta_l,\ldots,\eta_l}}.
	\end{equation*}
\end{theorem}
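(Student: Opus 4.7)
The plan is to prove the identity \eqref{spline space} by a dimension-count argument combined with the smoothness properties of B-splines recalled in the previous subsection. Concretely, I would establish three things: (i) every B-spline $b_{i,p,\Xi}$ belongs to $S_p^{\mathbf{r}}(\Delta)$, so that $\mathrm{span}\{b_{i,p,\Xi}\}_{i=1}^n \subseteq S_p^{\mathbf{r}}(\Delta)$; (ii) the family $\{b_{i,p,\Xi}\}_{i=1}^n$ is linearly independent on $[\eta_0,\eta_{l+1}]$; (iii) the dimension of $S_p^{\mathbf{r}}(\Delta)$ equals $n$. Combining these three facts yields the claimed equality.

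For (i), I would use the bullet points listed after the Cox–de Boor definition. Each $b_{i,p,\Xi}$ is piecewise polynomial of degree $\leq p$ on the subintervals determined by the break points $\eta_0,\ldots,\eta_{l+1}$ (which are exactly the distinct knots of $\Xi$ by construction), so the polynomial-piece condition in \eqref{pol tratti} is satisfied. Moreover, since the interior break point $\eta_i$ appears in $\Xi$ with multiplicity exactly $p - r_i$, the smoothness property gives $b_{j,p,\Xi} \in C^{p-(p-r_i)}(\eta_i)=C^{r_i}(\eta_i)$, which is precisely the regularity required by $S_p^{\mathbf{r}}(\Delta)$. (Left-continuity at the right endpoint $\xi_{n+1}=\eta_{l+1}$ has been imposed by the limiting convention introduced just before the definition of an open knot vector.)

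For (ii), I would appeal directly to the linear-independence property of B-splines built on an open knot vector, which is the third item of the Remark preceding the theorem. For (iii), I would do the standard constraint-counting: on each of the $l+1$ subintervals $[\eta_i,\eta_{i+1}]$ we have $p+1$ free polynomial coefficients, while each interior break point $\eta_i$ imposes $r_i+1$ matching conditions on derivatives of orders $0,\dots,r_i$, giving
\begin{equation*}
\dim S_p^{\mathbf{r}}(\Delta)=(l+1)(p+1)-\sum_{i=1}^{l}(r_i+1)=p+1+\sum_{i=1}^{l}(p-r_i).
\end{equation*}
A direct count of the knots of the proposed $\Xi$ shows that its length is $2(p+1)+\sum_{i=1}^l (p-r_i)=n+p+1$, so that indeed $n=p+1+\sum_{i=1}^l(p-r_i)$ matches the computed dimension.

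Putting everything together, the $n$ functions $b_{i,p,\Xi}$ are linearly independent elements of a space of dimension $n$, hence they form a basis and \eqref{spline space} follows. I expect the only delicate step to be the constraint count in (iii), since one must make sure that the $r_i+1$ derivative-matching conditions at each interior break point are genuinely independent from the polynomial degrees of freedom on each side; this is clear because they couple coefficients on adjacent intervals via a triangular linear system in the leading coefficients, but it is worth stating explicitly rather than waving through.
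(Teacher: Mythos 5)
The paper does not prove this statement: it is quoted verbatim from the reference on B-splines and IgA, so there is no in-text argument to compare yours against. Judged on its own, your three-step plan (inclusion of each $b_{i,p,\Xi}$ in $S_p^{\mathbf{r}}(\Delta)$, linear independence, dimension match) is the standard Curry--Schoenberg-type argument and is essentially correct. Step (i) is fine: a break point $\eta_i$ either lies outside the support of a given B-spline (which is then smooth there) or is a knot of multiplicity $p-r_i$, giving $C^{p-(p-r_i)}=C^{r_i}$ regularity by the smoothness bullet point; the knot count $2(p+1)+\sum_{i=1}^l(p-r_i)=n+p+1$ is also right. The only real gap is the one you flag yourself in (iii), and your parenthetical about ``a triangular linear system in the leading coefficients'' does not quite close it. The clean way is to avoid counting constraints altogether and instead build an element of $S_p^{\mathbf{r}}(\Delta)$ interval by interval: the polynomial piece on $[\eta_0,\eta_1]$ is free ($p+1$ parameters), and once the piece on $[\eta_{i-1},\eta_i]$ is fixed, the piece on $[\eta_i,\eta_{i+1}]$ is determined by its Taylor coefficients at $\eta_i$, of which the first $r_i+1$ are prescribed by the left piece and the remaining $p-r_i$ are free; this gives $\dim S_p^{\mathbf{r}}(\Delta)=(p+1)+\sum_{i=1}^l(p-r_i)=n$ exactly, with no independence-of-constraints issue to worry about. (Equivalently, one exhibits the truncated-power basis $\{x^j\}_{j=0}^p\cup\{(x-\eta_i)_+^j\}_{i=1,\dots,l;\;j=r_i+1,\dots,p}$.) With that replacement your argument is complete.
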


The space \eqref{spline space} is called \textit{spline space}. We will denote it by $S^p_\Xi([\xi_{p+1},\xi_{n+1}])$ in the case of $C^{p-1}$ global regularity, or with $S^p_h([\xi_{p+1},\xi_{n+1}])$, where $h$ is defined as $h:=\max{\{|\xi_{i+1}-\xi_i|\ : \ i=1,\ldots,n+p+1}\}$. Our choice will depend on which notation is most convenient for the case we consider.

\begin{definizione}
	The coefficients of the B-splines generating the spline space are called control points or degrees of freedom. The elements in \eqref{spline space} are also called B-spline curves.
\end{definizione}

Let us note that an affine transformation of a B-spline curve is obtained by applying the transformation to control points (\cite{isogeoCAD}). This is one of the many reasons why these curves are widely used in Computer-Aided Design (CAD).

\begin{oss}
In general, control points do not interpolate B-spline curves in knots. Instead, extremal control points interpolate B-spline curves at the extremal knots $\xi_{p+1}$ and $\xi_{n+1}$.
\end{oss}

\begin{oss}
	In this section on spline spaces we could have followed a more general treatment, instead of considering only open knot vectors. This was not of interest to us, since open knot vectors are the ones used in CAD and isogeometric analysis. See (\cite{chapter1}) if you are interested in a more general discussion.
\end{oss}

\section{Variational methods}

\subsection{Well-posedness of abstract problems}
In this Section we introduce an abstract variational problem and determine the conditions under which this problem is well-posed. The main references of this Section are Chapter 2 of (\cite{lect2}) and (\cite{saito2018notes}). 

Let $V$ and $W$ be two vector spaces endowed with norms $\|\cdot\|_V$ and $\|\cdot\|_W$. Let $a: W \times V \rightarrow \mathbb{R}$ be a given bounded bilinear form and let $\mathcal{F} \in V'$. Let us consider the following abstract variational problem
\begin{equation}\label{pbl var}
\begin{cases}
\text{Find} \ u \in W \ \text{such that}\\
a(u,v)=\langle \mathcal{F},v\rangle_{V', V} \quad \forall v \in V.
\end{cases}
\end{equation}
$W$ is called the \textit{solution space} and $V$ is called the \textit{test space}.

\begin{oss}
	A significant question is: what do we mean by \textbf{solution} of a partial differential equation (PDE)? One could ask for the PDE of order k to be solved pointwise by one (or more) functions of class $C^k$, or even by one (or more) of class $C^\infty$: in this case the solution(s) is (are) called classical or strong solution(s). However, in general, solutions to relevant problems are not so regular. Therefore, the concept of a weak solution is introduced for a given PDE and, if necessary, its regularity is studied. Typically, the variational formulation \eqref{pbl var} results from the weak formulation of PDEs. Consider, for example, the Poisson problem:
	\begin{equation*}
	\begin{cases}
	-\Delta u(x) = f(x) \quad x \in \Omega\\
	u(x) = 0 \quad x \in \partial{\Omega},
	\end{cases}
	\end{equation*}
	where $\Omega \subset \mathbb{R}^n$ is an open bounded Lipschitz domain. By integration by part, its weak formulation reads as follows
	\begin{equation}\label{poiss var}
	\begin{cases}
	\text{Find} \ u \in H^1_0(\Omega) \quad \text{such that}\\
	\langle \nabla u, \nabla v \rangle_{L^2(\Omega)} = \langle f, v \rangle_\Omega \quad \forall v \in H^1_0(\Omega),
	\end{cases}
	\end{equation}
	where $f \in H^{-1}(\Omega)$ is given. In particular, in \eqref{poiss var}, the bilinear form is defined as
	\begin{gather*}
	a: H^1_0(\Omega) \times H^1_0(\Omega) \longrightarrow \mathbb{R} \quad \text{s.t.}\\
	a(u,v)=\langle \nabla u, \nabla v \rangle_{L^2(\Omega)} \quad \forall (u,v) \in H^1_0(\Omega) \times H^1_0(\Omega).
	\end{gather*}
	
\end{oss}

\begin{definizione}[Hadamard]
	Problem \eqref{pbl var} is said to be well-posed if it
	admits one and only one solution and if the following a priori estimate holds:
	\begin{equation}\label{stab}
		\exists C>0: \ \forall \mathcal{F} \in V', \ \|u\|_{W} \leq C \|\mathcal{F}\|_{V'}.
	\end{equation}
\end{definizione}

\begin{oss}
	The notion of well-posedness of a problem captures many of the desirable characteristics for a solution of a PDE. In particular, condition \eqref{stab} is very important for problems arising from physical applications. Indeed, it is clearly preferable that the solution has a ``little'' change when the specific conditions of the problem have ``little'' changes.
\end{oss}
\begin{oss}
	A bounded, linear operator $\mathcal{A}: W \rightarrow V'$ is associated with the bounded bilinear form $a(\cdot,\cdot)$ by setting
	\begin{equation*}
		\langle \mathcal{A}u,v \rangle_{V', V}:=a(u,v), \quad u \in W, v \in V.
	\end{equation*}
	Therefore, problem \eqref{pbl var} amounts to seeking $u \in W$ such that $\mathcal{A}u=\mathcal{F}$ in $V'$.
	
	The two following statements are equivalent:
	\begin{itemize}
		\item The variational problem \eqref{pbl var} is well-posed.
		\item The bounded linear operator $\mathcal{A}: W \rightarrow V'$ associated with the continuous bilinear form $a(\cdot,\cdot)$ is an isomorphism.
	\end{itemize}

Also, every bounded, linear operator $\mathcal{A}: W \rightarrow V'$ defines a bounded bilinear form $a: W \times V \rightarrow \mathbb{R}$ by setting
\begin{equation*}
a(u,v):=\langle \mathcal{A}u,v \rangle_{V', V}, \quad u \in W, v \in V.
\end{equation*}
\end{oss}
\hspace{0.2cm} 
\\
\textbf{The Banach-Ne$\mathbf{\breve{c}}$as-Babu$\mathbf{\breve{s}}$ka Theorem}. This Theorem gives \textit{necessary and sufficient} conditions for the well-posedness of \eqref{pbl var}.
\begin{theorem}[Banach-Ne$\mathbf{\breve{c}}$as-Babu$\mathbf{\breve{s}}$ka, (\cite{saito2018notes})]\label{BNB}
	Let $(W,\|\cdot\|_W)$ be a real Banach space, let $(V,\|\cdot\|_V)$ be a real reflexive, Banach space. Then the following statements are equivalent:
	\begin{itemize}
		\item[1.] The variational problem \eqref{pbl var} is well-posed.
		\item[2.] For the bounded bilinear form $a: W \times V \rightarrow \mathbb{R}$ there hold:
		\begin{itemize}
			\item There exists a constant $\beta >0$ such that
			\begin{equation}\label{infsup}
			\inf_{u \in W} \sup_{v \in V} 
			\frac{a(u,v)}{\|u\|_W \|v\|_V} \geq \beta.
			\end{equation}
			\item For each $v \in V$,
			\begin{equation}\label{non null}
			(\forall u \in W, \ a(u,v)=0) \Longrightarrow (v=0)
			\end{equation}
		\end{itemize}
		\item[3.] There exist two constants $\beta, \gamma >0$ such that
		\begin{gather*}
			\inf_{u \in W} \sup_{v \in V} 
			\frac{a(u,v)}{\|u\|_W \|v\|_V} \geq \beta,\\
			\inf_{v \in V} \sup_{u \in W} 
			\frac{a(u,v)}{\|u\|_W \|v\|_V} \geq \gamma.
		\end{gather*}
	\end{itemize}
\end{theorem}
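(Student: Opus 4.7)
The plan is to recast everything via the bounded linear operator $\mathcal{A}: W \to V'$ defined by $\langle \mathcal{A}u,v\rangle_{V',V}=a(u,v)$, as suggested in the remark preceding the theorem. Under this identification, the variational problem \eqref{pbl var} is well-posed precisely when $\mathcal{A}$ is a bicontinuous isomorphism onto $V'$, so the whole theorem becomes an operator-theoretic statement about $\mathcal{A}$ and its adjoint. I would prove the chain $(1)\Leftrightarrow(2)\Leftrightarrow(3)$ by handling $(1)\Leftrightarrow(2)$ first, and then using the symmetry built into $(3)$ to derive it from $(1)$.

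For $(1)\Rightarrow(2)$, if $\mathcal{A}$ is an isomorphism, then $\mathcal{A}^{-1}$ is bounded by the open mapping theorem, which gives $\|u\|_W \leq \|\mathcal{A}^{-1}\|\,\|\mathcal{A}u\|_{V'}$. Since $\|\mathcal{A}u\|_{V'}=\sup_{v\in V\setminus\{0\}} a(u,v)/\|v\|_V$, this immediately yields the inf-sup bound \eqref{infsup} with $\beta=\|\mathcal{A}^{-1}\|^{-1}$. The non-null condition \eqref{non null} follows because surjectivity of $\mathcal{A}$ forces any $v\in V$ annihilating all of $\mathrm{Range}(\mathcal{A})=V'$ to be zero. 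For $(2)\Rightarrow(1)$, I would show the two standard consequences of the inf-sup: the estimate \eqref{infsup} forces $\mathcal{A}$ to be injective and to have closed range, by a Cauchy-sequence argument applied to any sequence $\mathcal{A}u_n \to g$ in $V'$. Then I would use the non-null condition together with reflexivity of $V$ to conclude density of $\mathrm{Range}(\mathcal{A})$ in $V'$: if $\mathrm{Range}(\mathcal{A})$ were not dense, Hahn-Banach would produce a nonzero $\Phi\in V''$ vanishing on it; by reflexivity $\Phi$ is represented by some $v\in V\setminus\{0\}$, and $\Phi(\mathcal{A}u)=a(u,v)=0$ for every $u\in W$ contradicts \eqref{non null}. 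Closed plus dense range gives surjectivity, so $\mathcal{A}$ is bijective, and the bound \eqref{stab} with $C=1/\beta$ follows from the inf-sup itself, completing well-posedness.

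For $(2)\Leftrightarrow(3)$, the second inequality is just the inf-sup of the adjoint bilinear form $a^{*}(v,u):=a(u,v)$, equivalent to a bound from below on the adjoint operator $\mathcal{A}^{*}:V\to W'$ (using reflexivity of $V$ to identify $V''$ with $V$ so that $\mathcal{A}^{*}$ lands where I want). Given $(1)$ (which is equivalent to $(2)$), $\mathcal{A}$ is an isomorphism, so $\mathcal{A}^{*}$ is also an isomorphism with $\|(\mathcal{A}^{*})^{-1}\|=\|\mathcal{A}^{-1}\|$, giving the second inf-sup with $\gamma=\|\mathcal{A}^{-1}\|^{-1}$. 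Conversely, the second inf-sup implies $\mathcal{A}^{*}$ is injective, which via reflexivity is equivalent to $\mathrm{Range}(\mathcal{A})$ being dense in $V'$, hence to the non-null condition \eqref{non null}; combined with the first inf-sup we recover $(2)$.

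The main technical point is the correct use of reflexivity of $V$ to identify $V''$ with $V$ and thereby translate \enquote{no annihilator in $V''$} (which is what Hahn-Banach naturally gives for density of the range) into the concrete statement \eqref{non null} about elements $v\in V$. Without reflexivity one could only obtain density modulo the canonical image of $V$ in $V''$, and the equivalence $(2)\Leftrightarrow(1)$ would break. Everything else — the closed-range argument from the inf-sup, the application of the open mapping theorem, the passage to the adjoint for $(3)$ — is routine once this identification is in place.
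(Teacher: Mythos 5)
Your proof is correct: the reduction to the operator $\mathcal{A}:W\to V'$, the closed-range-plus-density argument for $(2)\Rightarrow(1)$ (with reflexivity of $V$ used exactly where it is needed, to turn the Hahn--Banach annihilator in $V''$ into an element of $V$), and the passage to the adjoint for $(3)$ constitute the standard proof of this theorem. The paper itself does not prove the statement --- it quotes it from (\cite{saito2018notes}) and (\cite{lect2}) --- so there is nothing to compare against; your argument matches the one in those references.
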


Condition \eqref{infsup} is usually called the \textit{inf-sup} condition or the \textit{Babu$\breve{s}$ka-Brezzi} condition.

\begin{oss}
	The following statements hold:
	\begin{itemize}
		\item Given $V$ and $W$ two real normed spaces, condition \eqref{infsup} is expressed equivalently as
		\begin{equation*}
			\exists \beta>0: \ \beta \|u\|_W \leq \sup_{v \in V} 
			\frac{a(u,v)}{\|v\|_V} \quad \forall u \in W.
		\end{equation*}
		\vspace{0.5cm} \\
		Under the assumptions of Theorem \ref{BNB}:
		\item The well-posedness of \eqref{pbl var} is satisfied with
		\begin{equation*}
			\|u\|_{W} \leq  \frac{1}{\beta} \|\mathcal{F}\|_{V'}.
		\end{equation*}
		In general, given $V$ and $W$ two normed spaces, if \eqref{pbl var} admits unique solution for every $\mathcal{F} \in V'$, then condition \eqref{stab} is equivalent to \eqref{infsup} with $\beta = \frac{1}{C}$.
		\item We can translate conditions \eqref{infsup}, \eqref{non null} into conditions for the linear operator $\mathcal{A}(\cdot)$, see (\cite{lect2}). \\ 
		\begin{gather*}
			\eqref{infsup} \Longleftrightarrow (\ker{(\mathcal{A})}=\{0\} \ \text{and} \ \text{Im}(\mathcal{A}) \ \text{is closed}) \Longleftrightarrow (\mathcal{A}^{*} \ \text{is surjective}),\\ 
			\eqref{non null} \Longleftrightarrow (\ker{(\mathcal{A}^*)}=\{0\}) \Longleftrightarrow (\mathcal{A}^* \ \text{is injective}),
		\end{gather*}
		where $\mathcal{A}^*$ is the adjoint operator of $\mathcal{A}$. \vspace{0.3cm} \\
		In general, given $X$ and $Y$ two real normed vector spaces and a linear operator $\mathcal{A}: X \rightarrow Y$, if $\mathcal{A}$ is invertible, then the adjoint operator of $\mathcal{A}$ is invertible and satisfies $(\mathcal{A}^*)^{-1}=(\mathcal{A}^{-1})^*$, see (\cite{adjoint}) for the case of Hilbert spaces (the generalisation of the results to the case of any normed space is straightforward). In particular, as a consequence of $(\mathcal{A}^*)^{-1}=(\mathcal{A}^{-1})^*$ and of the equality in norm between an operator and its adjoint, if $\mathcal{A}$ is isomorphism, then $\mathcal{A}^*$ is an isomorphism with the same continuity constants (also for the inverse maps). The BNB Theorem \ref{BNB} guarantees that, if $X=W$ is a real Banach space, and $Y=V'$, with $V$ that is a real reflexive Banach space, the reverse is also true, i.e., if $\mathcal{A}^*$ is an isomorphism, then $\mathcal{A}$ is an isomorphism.
		\item The well-posedness of the adjoint problem 
		\begin{equation*}
		\begin{cases}
		\text{Find} \ v \in V \ \text{such that}\\
		a(u,v)=\langle \mathcal{G},u\rangle_{W', W} \quad \forall u \in W,
		\end{cases}
		\end{equation*}
		where $\mathcal{G} \in W'$,
		is equivalent to $\mathcal{A}^*$ being an isomorphism.
	\end{itemize}
\end{oss}
\hspace{0.2cm} 
\\
\textbf{The Lax-Milgram Lemma}. Consider the case where the solution space and the test space are identical Hilbert spaces. Thus, the abstract variational problem has the following formulation:
\begin{equation}\label{pbl var lm}
\begin{cases}
\text{Find} \ u \in V \ \text{such that}\\
a(u,v)=\langle \mathcal{F},v\rangle_{V', V} \quad \forall v \in V
\end{cases}
\end{equation}
The Lax-Milgram Lemma gives \textit{sufficient} conditions under which problem \eqref{pbl var lm} is well-posed.
\begin{lemma}[Lax-Milgram]\label{lax-milgram}
	Let $V$ be a Hilbert space, let $a: V \times V \rightarrow \mathbb{R}$ be a bounded bilinear form. Assume that $a(\cdot,\cdot)$ is coercive, i.e, 
	\begin{equation}\label{coerc}
	\exists \alpha >0: \ \forall u \in V, \ a(u,u)\geq \alpha \|u\|_V^2.
	\end{equation}
	Let $\mathcal{F} \in V'$. Then problem \eqref{pbl var} is well-posed with the following a priori estimate
	\begin{equation*}
		\|u\|_V \leq \frac{1}{\alpha} \|\mathcal{F}\|_{V'}.
	\end{equation*}
\end{lemma}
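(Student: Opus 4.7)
The plan is to obtain the Lax-Milgram Lemma as a direct corollary of the Banach-Nečas-Babuška Theorem \ref{BNB} already at our disposal. Since a Hilbert space is automatically a real reflexive Banach space, taking $W = V$, it suffices to verify the two hypotheses of item 2 of Theorem \ref{BNB} from the single assumption of coercivity, and to check that the resulting constant is exactly $\beta = \alpha$.

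First I would handle the inf-sup condition. For any $u \in V$ with $u \neq 0$, coercivity gives
\begin{equation*}
\alpha \|u\|_V^2 \leq a(u,u) \leq \|u\|_V \sup_{v \in V \setminus \{0\}} \frac{a(u,v)}{\|v\|_V},
\end{equation*}
where the second inequality uses $v = u$ as a particular admissible candidate in the supremum. Dividing by $\|u\|_V$ (the case $u=0$ is trivial) yields \eqref{infsup} with constant $\alpha$. For the non-degeneracy condition \eqref{non null}, suppose $v \in V$ satisfies $a(u,v)=0$ for every $u \in W = V$; picking $u = v$ and invoking coercivity gives $0 = a(v,v) \geq \alpha \|v\|_V^2$, hence $v = 0$.

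With both conditions in hand, Theorem \ref{BNB} delivers existence, uniqueness, and the a priori estimate $\|u\|_V \leq \tfrac{1}{\beta}\|\mathcal{F}\|_{V'} = \tfrac{1}{\alpha}\|\mathcal{F}\|_{V'}$, which is precisely the stated bound.

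Honestly, once BNB is available there is no genuine obstacle: the whole argument reduces to plugging $v=u$ into the bilinear form. The only place where care is needed is making sure that the constant $\beta$ produced by coercivity is really $\alpha$ (and not a worse constant), so that the a priori bound has the sharp form $1/\alpha$; this is why the estimate above keeps $\|u\|_V$ on the left-hand side and does not absorb it into a larger constant. If instead one wanted a self-contained proof avoiding BNB, the main difficulty would shift to proving surjectivity of the operator $\mathcal{A}: V \to V'$ associated with $a$: one would identify $V' \cong V$ through the Riesz representation theorem, rewrite the equation $\mathcal{A}u = \mathcal{F}$ as a fixed-point problem $u = T_\rho u := u - \rho(\mathcal{A}u - f)$ in $V$, and use coercivity together with boundedness to show that $T_\rho$ is a strict contraction for sufficiently small $\rho > 0$, concluding via the Banach fixed-point theorem. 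The present route via BNB is considerably shorter.
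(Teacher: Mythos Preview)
Your proof is correct and follows exactly the route taken in the paper: the Lax-Milgram Lemma is obtained as a corollary of the BNB Theorem \ref{BNB} by verifying that coercivity implies both \eqref{infsup} and \eqref{non null} with constant $\beta=\alpha$ (this is precisely the content of Lemma \ref{LM then BNB}). Your derivation of each condition via the test choice $v=u$ (respectively $u=v$) matches the paper's argument line by line.
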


The Lax-Milgram Lemma can be viewed as a Corollary of the Banach-Ne$\breve{c}$as-Babu$\breve{s}$ka Theorem, since the coercivity of $a(\cdot,\cdot)$ implies statement $2$ of Theorem \ref{BNB}.

\begin{lemma}\label{LM then BNB}
	The coercivity condition \eqref{coerc} implies conditions \eqref{infsup}, \eqref{non null} of the BNB Theorem. 
\end{lemma}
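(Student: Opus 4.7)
The plan is to verify each of the two conditions \eqref{infsup} and \eqref{non null} directly from the coercivity hypothesis \eqref{coerc}, exploiting the fact that the solution and test spaces coincide so that we may test with $v = u$.

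First I would check the inf-sup condition. Fix any $u \in V$ with $u \neq 0$. Since the supremum over $v \in V$ of $a(u,v)/\|v\|_V$ is at least the value attained at the specific choice $v = u$, coercivity yields
\begin{equation*}
\sup_{v \in V} \frac{a(u,v)}{\|v\|_V} \geq \frac{a(u,u)}{\|u\|_V} \geq \frac{\alpha \|u\|_V^2}{\|u\|_V} = \alpha \|u\|_V.
\end{equation*}
Dividing by $\|u\|_V$ and taking the infimum over $u \neq 0$ gives \eqref{infsup} with constant $\beta = \alpha$. (The case $u = 0$ is trivially compatible with the inf-sup formulation.)

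Next I would verify \eqref{non null}. Suppose $v \in V$ satisfies $a(u,v) = 0$ for every $u \in V$. Specializing to $u = v$ and applying \eqref{coerc} gives
\begin{equation*}
0 = a(v,v) \geq \alpha \|v\|_V^2,
\end{equation*}
which, since $\alpha > 0$, forces $\|v\|_V = 0$ and hence $v = 0$.

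There is no real obstacle here: both implications rest entirely on the ability to test with $v = u$, which is available precisely because solution and test spaces coincide. The only remark worth making is that the argument shows a stronger statement than needed, namely that the symmetric inf-sup condition $\inf_{v} \sup_{u} a(u,v)/(\|u\|_V \|v\|_V) \geq \alpha$ also holds, giving statement $3$ of Theorem \ref{BNB} with $\beta = \gamma = \alpha$; this makes explicit why Lax--Milgram is a corollary of BNB.
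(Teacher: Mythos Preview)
Your proof is correct and follows essentially the same approach as the paper: both arguments test with $v=u$ (respectively $u=v$) and read off \eqref{infsup} and \eqref{non null} directly from coercivity. Your additional remark that the symmetric inf-sup condition also holds with $\beta=\gamma=\alpha$ is a nice observation beyond what the paper records.
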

\begin{proof}
	Let $u \in V$. Condition \eqref{infsup} is immediately deduced from
	\begin{equation*}
		\alpha \|u\|_V \overset{\eqref{coerc}}\leq \frac{a(u,u)}{\|u\|_V} \leq \sup_{v \in V} \frac{a(u,v)}{\|v\|_V}.
	\end{equation*}
	Let now $v \in V$. As a consequence of coercivity there hold
	\begin{equation*}
		\sup_{u \in V} a(u,v) \geq a(v,v) \overset{\eqref{coerc}} \geq \alpha \|v\|_V^2.
	\end{equation*}
	Therefore, $\sup_{u \in V} a(u,v)=0$ implies $v=0$, i.e., \eqref{non null}.
\end{proof}

\begin{oss}
	The reversal of Lemma \ref{LM then BNB} is wrong, i.e., \eqref{coerc} is not equivalent to the well-posedness of \eqref{pbl var lm}. However, when the bilinear form $a(\cdot,\cdot)$ is symmetric and positive, coercivity is equivalent to well-posedness, see (\cite{lect2}).
\end{oss}

\subsection{Galerkin method}\label{galerkin method}
Let $W_h$ be a finite-dimensional subspace of $W$ and $V_h$ be a finite-dimensional subspace of $V$. The Galerkin method constructs an approximation of the solution $u$ of the abstract variational problem \eqref{pbl var} by solving the following problem
\begin{equation}\label{galerkin petrov}
\begin{cases}
\text{Find} \ u_h \in W_h \ \text{such that}\\
a(u_h,v_h)=\langle \mathcal{F}_{|V_h},v_h\rangle_{V_h', V_h} \quad \forall v_h \in V_h.
\end{cases}
\end{equation}
$W_h$ is called the \textit{solution space} or the \textit{trial space}, whereas $V_h$ is called the \textit{test space}. If the trial space and the test space are the same, the Galerkin method is the following problem:
\begin{equation}\label{galerkin bubnov}
\begin{cases}
\text{Find} \ u_h \in V_h \ \text{such that}\\
a(u_h,v_h)=\langle \mathcal{F}_{|V_h},v_h\rangle_{V_h', V_h} \quad \forall v_h \in V_h.
\end{cases}
\end{equation}
Typically, the former is called \textit{Galerkin-Petrov method}, whereas the latter is called \textit{Galerkin-Bubnov method}.

\begin{oss}
	We can also consider $W_h$ and $V_h$ as closed subspaces of $W$ and $V$ respectively, but, for the numerical approximation of $u$, we are interested in finite-dimensional subspaces of the trial and test spaces.
\end{oss}

We are interested in investigating the well-posedness of the approximate problems \eqref{galerkin petrov}, \eqref{galerkin bubnov}, in proving a stability condition, i.e., a uniform (w.r.t a numerical parameter $h$ indexing the discrete spaces) a priori estimate \eqref{stab} for the discrete problem, and convergence results (w.r.t $h$) of the discrete solution to the abstract one. Let us begin by considering the first and second questions. The following Proposition is straightforward.
\begin{prop}
	If the variational problem \eqref{pbl var lm} satisfies Lax-Milgram hypothesis, then the variational problem \eqref{galerkin bubnov} is well-posed. In particular, for all $\mathcal{F} \in V'$ the stability estimate $\|u_h\|_V \leq \frac{1}{\alpha} \|\mathcal{F}\|_{V'}$ holds.
\end{prop}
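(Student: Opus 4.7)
The plan is to observe that the Galerkin-Bubnov problem \eqref{galerkin bubnov} is itself an instance of the abstract variational problem \eqref{pbl var lm}, posed on the finite-dimensional subspace $V_h \subset V$, so that the conclusion will follow by a direct application of the Lax-Milgram Lemma \ref{lax-milgram} to this restricted problem.

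First, I would verify that the functional framework required by Lemma \ref{lax-milgram} transfers to $V_h$. Since $V_h$ is a finite-dimensional subspace of the Hilbert space $V$, it is automatically closed and therefore a Hilbert space with the induced inner product $\langle \cdot, \cdot \rangle_V$ and the norm $\|\cdot\|_V$. The restriction $a_{|V_h \times V_h}$ of $a(\cdot,\cdot)$ remains a bilinear form, and it inherits boundedness from $a$ on $V\times V$. The restricted functional $\mathcal{F}_{|V_h}$ belongs to $V_h'$, since for every $v_h \in V_h$
\begin{equation*}
|\langle \mathcal{F}_{|V_h}, v_h\rangle_{V_h',V_h}| = |\langle \mathcal{F}, v_h\rangle_{V',V}| \leq \|\mathcal{F}\|_{V'}\|v_h\|_V,
\end{equation*}
which in particular yields $\|\mathcal{F}_{|V_h}\|_{V_h'} \leq \|\mathcal{F}\|_{V'}$.

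Next, I would check that coercivity is preserved. For any $u_h \in V_h \subset V$, the hypothesis \eqref{coerc} applied to $u_h$ gives
\begin{equation*}
a(u_h,u_h) \geq \alpha \|u_h\|_V^2,
\end{equation*}
with the very same constant $\alpha>0$, so the restricted bilinear form is coercive on $V_h$ with coercivity constant $\alpha$.

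Having gathered all the hypotheses of Lemma \ref{lax-milgram} for the problem \eqref{galerkin bubnov} posed on $V_h$, I would conclude that \eqref{galerkin bubnov} admits a unique solution $u_h \in V_h$, and that it satisfies the a priori estimate
\begin{equation*}
\|u_h\|_V \leq \frac{1}{\alpha}\|\mathcal{F}_{|V_h}\|_{V_h'} \leq \frac{1}{\alpha}\|\mathcal{F}\|_{V'},
\end{equation*}
which is precisely the claimed stability bound. There is essentially no obstacle here: the only point that deserves explicit mention is that the coercivity, continuity and norm on $V_h$ are inherited with the same constants as on $V$, so the estimate is \emph{uniform} in the choice of the finite-dimensional subspace $V_h$, which is exactly what one needs for the subsequent convergence analysis.
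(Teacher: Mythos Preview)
Your proof is correct and follows exactly the same approach as the paper, which simply notes that Lax--Milgram applies because coercivity of $a(\cdot,\cdot)$ is inherited on $V_h$. You spell out more carefully the inheritance of the Hilbert-space structure, boundedness, coercivity, and the bound $\|\mathcal{F}_{|V_h}\|_{V_h'}\le\|\mathcal{F}\|_{V'}$, but the argument is the same.
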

\begin{proof}
	We can apply Lax-Milgram Lemma \ref{lax-milgram} since the bilinear form $a(\cdot,\cdot)$ is coercive in $V_h$.
\end{proof}

In general, instead, there is no guarantee that conditions \eqref{infsup}, \eqref{non null} of the BNB Theorem are automatically transferred from the abstract problem to the approximate problem. Thanks to the BNB Theorem, the well-posedness of \eqref{galerkin petrov} is equivalent to the following discrete conditions:
\begin{itemize}
	\item There exists a constant $\beta_h >0$ such that
	\begin{equation}\label{infsup discreta}
	\inf_{u_h \in W_h} \sup_{v_h \in V_h} 
	\frac{a(u_h,v_h)}{\|u_h\|_{W_h} \|v_h\|_{V_h}} \geq \beta_h.
	\end{equation}
	\item For each $v_h \in V_h$,
	\begin{equation}\label{non null discreta}
	(\forall u_h \in W_h, \ a(u_h,v_h)=0) \Longrightarrow (v_h=0)
	\end{equation}
\end{itemize}
Condition \eqref{infsup discreta} is usually called \textit{discrete inf-sup} condition.
\begin{oss}
	In many cases, the subscript $h$ denotes the mesh-size of the discretization and, in general, we are interested in proving an a priori estimate \eqref{stab} for the discrete problem that is independent of $h$. Hence, the discrete inf-sup condition \eqref{infsup discreta} is not sufficient for the desired stability, if no information on the value of $\beta_h$ is available.
\end{oss}

\begin{oss}\label{remark discrete system}
	Evaluating the bilinear form $a(\cdot,\cdot)$ and the linear operator $\mathcal{F}(\cdot)$ on the basis functions of the spaces $W_h$ and $V_h$, we obtain a linear system that is equivalent to the general approximate problem \eqref{galerkin petrov}, see (\cite{lect2}). We use the notation $\mathbf{A}$ for the system matrix, which will be refer to as the \textit{stiffness matrix}. \\
	There hold the following statements.
	\begin{itemize}
		\item[1.] The well-posedness of the approximate problem is equivalent to non-singularity of  $\mathbf{A}$.
		\item[2.] If the abstract bilinear form $a(\cdot,\cdot)$ of problem \eqref{pbl var lm} is coercive, $\mathbf{A}$ is positive definite, i.e., defined $M:=\text{dim}(V_h)$, there holds 
		\begin{multline*}
		(\exists \alpha >0: \ \forall u \in V, \ a(u,u) \geq \alpha \|u\|_V^2) \Longrightarrow\\
		(\forall X \in \mathbb{R}^M, \ (\mathbf{A}X,X) \geq 0 \ \text{and} \ ((\mathbf{A}X,X)=0 \Longleftrightarrow X=0) ).
		\end{multline*}
		\item[3.] If $a(\cdot,\cdot)$ is symmetric, $\mathbf{A}$ is symmetric.
		\item[4.] Condition \eqref{infsup discreta} is equivalent to $\ker{(A)}=\{0\}$, i.e., to the injectivity of $\mathbf{A}$.
		\item[5.] Condition \eqref{non null discreta} is equivalent to rank$(\mathbf{A})=$dim$(V_h)$, i.e., to the surjectivity of $\mathbf{A}$.
		\item[6.] If dim$(V_h)=$ dim$(W_h)$, \eqref{infsup discreta} is equivalent to \eqref{non null discreta}.
	\end{itemize}
	For a proof we refer to (\cite{lect2}).
\end{oss}
\vspace{0.3cm} 
Let us now consider the convergence of the discrete solution to the abstract solution. In this connection, we recall some classic results.
\begin{lemma}[Galerkin orthogonality]
	Let $u$ be the solution of the general abstract problem \eqref{pbl var} and $u_h$ be the solution of the general approximate problem \eqref{galerkin petrov}, then there holds
	\begin{equation}\label{gal ort}
		a(u-u_h,v_h)=0 \quad \forall v_h \in V_h.
	\end{equation}
\end{lemma}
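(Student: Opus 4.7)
The plan is to exploit the conformity of the Galerkin discretization, namely the fact that $V_h \subseteq V$, which allows the same element of $V_h$ to serve as a test function both in the continuous variational problem \eqref{pbl var} and in the discrete one \eqref{galerkin petrov}. The proof should therefore consist of nothing more than writing the two variational equations against the same test function and subtracting.

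Concretely, I would first fix an arbitrary $v_h \in V_h$. Since $V_h \subseteq V$, the element $v_h$ is in particular admissible as a test function in \eqref{pbl var}, so $a(u,v_h) = \langle \mathcal{F}, v_h \rangle_{V',V}$. Second, because $u_h$ solves the Galerkin problem \eqref{galerkin petrov}, I have $a(u_h, v_h) = \langle \mathcal{F}_{|V_h}, v_h \rangle_{V_h', V_h}$, and the two right-hand sides agree since $\mathcal{F}_{|V_h}$ is by definition the restriction of $\mathcal{F}$ to $V_h$ and $v_h \in V_h$. Finally, subtracting the two identities and using bilinearity of $a(\cdot,\cdot)$ in its first argument gives $a(u - u_h, v_h) = 0$. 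Arbitrariness of $v_h \in V_h$ yields \eqref{gal ort}.

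There is essentially no obstacle here: the argument is a one-line consequence of conformity plus bilinearity, and the only subtlety worth flagging is the (immediate) identification of $\langle \mathcal{F}_{|V_h}, v_h \rangle_{V_h', V_h}$ with $\langle \mathcal{F}, v_h \rangle_{V', V}$ for $v_h \in V_h$, which is just the definition of restriction of a continuous linear functional. Note that well-posedness of either problem is not needed for the orthogonality relation itself; one only needs the existence of $u$ and $u_h$ satisfying the respective equations.
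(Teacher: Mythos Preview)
Your proof is correct and matches the paper's own argument, which simply notes that the result is immediate from the bilinearity of $a(\cdot,\cdot)$. You have merely spelled out the details (conformity $V_h\subseteq V$, identification of the two right-hand sides, and subtraction) that the paper leaves implicit.
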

\begin{proof}
	This is immediate using the bilinearity of $a(\cdot,\cdot)$.
\end{proof}

\begin{lemma}[Céa]\label{céa}
	Let the hypothesis of Lax-Milgram Lemma \ref{lax-milgram} be satisfied. Assuming $u$ is the solution of \eqref{pbl var lm} and $u_h$ is the solution of \eqref{galerkin bubnov}, then the following estimate holds
	\begin{equation}\label{céa bound}
		\|u-u_h\|_V \leq \frac{C_a}{\alpha} \inf_{v_h \in V_h} \|u-v_h\|_V,
	\end{equation}
	where $C_a$ is the continuity constant of $a(\cdot,\cdot)$.
\end{lemma}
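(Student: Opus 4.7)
The plan is to combine the three available ingredients in the standard way: coercivity from below, continuity from above, and Galerkin orthogonality to transfer the error to an arbitrary element of $V_h$.

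First I would fix an arbitrary $w_h \in V_h$ and observe that the discrete solution $u_h$ exists and is unique by the preceding proposition (Lax--Milgram applied to $V_h$), so that $u - u_h \in V$ is well defined. The quantity to bound is $\|u-u_h\|_V$, which by coercivity \eqref{coerc} satisfies $\alpha \|u-u_h\|_V^2 \leq a(u-u_h, u-u_h)$.

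Next I would split the second argument as $u - u_h = (u - w_h) + (w_h - u_h)$ and exploit bilinearity to write
\[
a(u-u_h, u-u_h) = a(u-u_h, u-w_h) + a(u-u_h, w_h - u_h).
\]
The key observation is that $w_h - u_h \in V_h$, so by the Galerkin orthogonality \eqref{gal ort} the second term vanishes. What remains is controlled by continuity of $a(\cdot,\cdot)$: $|a(u-u_h, u-w_h)| \leq C_a \|u-u_h\|_V \|u-w_h\|_V$. Combining the two chains yields $\alpha \|u-u_h\|_V^2 \leq C_a \|u-u_h\|_V \|u-w_h\|_V$. If $\|u-u_h\|_V = 0$ the bound \eqref{céa bound} is trivial; otherwise I divide by $\|u-u_h\|_V$ to obtain $\|u-u_h\|_V \leq (C_a/\alpha) \|u-w_h\|_V$. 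Since $w_h \in V_h$ was arbitrary, taking the infimum over $V_h$ on the right-hand side produces exactly \eqref{céa bound}.

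There is no real obstacle here: every step is a direct application of one of the three structural hypotheses, and the Galerkin-Bubnov setting $W_h = V_h$ is precisely what guarantees that $w_h - u_h$ lies in the test space so that orthogonality can be invoked. The only point that deserves a word of care is the zero-case split before dividing by $\|u-u_h\|_V$, which is a triviality but worth mentioning for rigor.
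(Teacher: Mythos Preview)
Your proof is correct and follows essentially the same route as the paper's: coercivity, then split the second argument, kill one piece by Galerkin orthogonality, bound the other by continuity, and divide. The only difference is cosmetic (you write $w_h$ where the paper writes $v_h$, and you add the harmless zero-case remark before dividing, which the paper omits).
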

\begin{proof}
	Let $v_h \in V_h$. As a consequence of coercivity, bilinearity, continuity and Galerkin ortogonality there hold
	\begin{equation*}
		\begin{split}
			\alpha \|u-u_h\|^2 &\overset{\eqref{coerc}}\leq a(u-u_h,u-u_h) \\
			&=a(u-u_h,u-v_h+v_h-u_h)\\
			&=a(u-u_h,u-v_h)+a(u-u_h,v_h-u_h) \quad \text{(bilinearity of $a(\cdot,\cdot)$)}\\
			&\overset{\eqref{gal ort}}=a(u-u_h,u-v_h)\\
			&\leq C_a \|u-u_h\|_V\|u-v_h\| \quad \text{(continuity of $a(\cdot,\cdot)$)}.
		\end{split}
	\end{equation*}
\end{proof}

\begin{oss}
	Note that \eqref{céa bound} is a quasi-optimality estimate. Typically, a quasi-optimality bound is an important result, since, according to that, the error committed by the Galerkin method depends on two terms. The first one, which is $\frac{C_a}{\alpha}$ in this case, is a stability term: it is only related to the continuous problem. The second one, i.e., the best approximation error $\inf_{v_h \in V_{h}} \|u-v_h\|_V$, measures how well the discrete space is able to approximate the solution $u$. In particular, if $(V_h)_h$ are dense in $V$, i.e., if
	\begin{equation*}
	 \inf_{v_h \in V_{h}} \|u-v_h\|_V \underset{h \rightarrow 0}{\longrightarrow} 0 \quad \forall u \in V,
	\end{equation*}
	estimate \eqref{céa bound} tells us that we eventually reach convergence.
\end{oss}

In the Banach-Ne$\mathbf{\breve{c}}$as-Babu$\mathbf{\breve{s}}$ka setting, quasi-optimality estimates are also obtained.

Let us assume that the general abstract problem \eqref{pbl var} is well-posed. We denote by $C_a$ the continuity constant of the bilinear form $a(\cdot,\cdot)$. Let us suppose that the discrete problem \eqref{galerkin petrov} satisfies the two BNB conditions \eqref{infsup discreta}, \eqref{non null discreta}, and that the discrete inf-sup condition is independent of the index $h$ (this is the case of greatest interest), and we denote it by $\beta_{dis}$. The following result holds.

\begin{prop}\label{err BNB}
	Assuming $u$ is the solution of \eqref{pbl var lm} and $u_h$ is the solution of \eqref{galerkin petrov}, then the following quasi-optimality estimate holds
	\begin{equation*}
	\|u-u_h\|_W \leq \Bigg[1+\frac{C_a}{\beta_{dis}}\Bigg]\inf_{w_h \in W_h}\|u-w_h\|_W,
	\end{equation*}
	where $C_a$ is the continuity constant of $a(\cdot,\cdot)$ and $\beta_{dis}$ is the inf-sup uniform constant of the discrete problem \eqref{galerkin petrov}.
\end{prop}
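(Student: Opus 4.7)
The plan is to follow the classical Céa-style argument, but with the role of coercivity replaced by the discrete inf-sup condition, and with Galerkin orthogonality used to shift the argument of the bilinear form onto an element of $W_h$ where inf-sup applies.

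First, I would fix an arbitrary $w_h \in W_h$ and split the error via the triangle inequality as
\begin{equation*}
\|u - u_h\|_W \leq \|u - w_h\|_W + \|w_h - u_h\|_W,
\end{equation*}
so that it suffices to control the second, \emph{discrete} term by $\|u - w_h\|_W$ up to the constant $C_a/\beta_{dis}$. Next I would apply the discrete inf-sup condition \eqref{infsup discreta} to $w_h - u_h \in W_h$, which gives
\begin{equation*}
\beta_{dis}\|w_h - u_h\|_W \leq \sup_{v_h \in V_h} \frac{a(w_h - u_h, v_h)}{\|v_h\|_V}.
\end{equation*}

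Then I would note that Galerkin orthogonality \eqref{gal ort}, which holds equally in the Petrov setting by subtracting \eqref{galerkin petrov} from \eqref{pbl var} tested against $v_h \in V_h$, lets me rewrite
\begin{equation*}
a(w_h - u_h, v_h) = a(w_h - u, v_h) + a(u - u_h, v_h) = a(w_h - u, v_h),
\end{equation*}
for every $v_h \in V_h$. Boundedness of $a(\cdot,\cdot)$ with constant $C_a$ then yields $a(w_h - u, v_h) \leq C_a \|u - w_h\|_W \|v_h\|_V$, and the $\|v_h\|_V$ factor cancels in the supremum, giving $\|w_h - u_h\|_W \leq (C_a/\beta_{dis})\|u - w_h\|_W$.

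Combining with the triangle inequality produces $\|u - u_h\|_W \leq (1 + C_a/\beta_{dis})\|u - w_h\|_W$, and since $w_h \in W_h$ was arbitrary, taking the infimum over $W_h$ concludes the proof. There is no real obstacle here: the only delicate point is being explicit that Galerkin orthogonality and the ``shift from $W_h$ into $V_h$'' step are the Petrov analogues of what coercivity plus orthogonality do in the Bubnov/Céa setting of Lemma \ref{céa}; beyond that, the argument is a chain of triangle inequality, inf-sup, orthogonality, and boundedness.
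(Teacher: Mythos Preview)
Your proof is correct and follows essentially the same approach as the paper: both use the triangle inequality on $u-u_h = (u-w_h)+(w_h-u_h)$, then control $\|w_h-u_h\|_W$ via the discrete inf-sup condition, Galerkin orthogonality, and boundedness of $a(\cdot,\cdot)$. The only cosmetic difference is that the paper packages the last step by introducing the Galerkin projection $G_h w \in W_h$ defined by $a(G_h w,v_h)=a(w,v_h)$ and bounding its operator norm by $C_a/\beta_{dis}$, whereas you apply the three ingredients directly to $w_h-u_h$; the content is identical.
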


\begin{proof}
	For any $w \in W$ we define $w_h:= G_h w \in W_h$ as the Galerkin projection satisfying 
	\begin{equation*}
	a(G_hw,v_h)=a(w, v_h) \quad \forall v_h \in V_h,
	\end{equation*}
	which is well defined thanks to the well-posedness of the discrete problem. Hence, by using the stability estimate \eqref{infsup discreta} with $\beta_{dis}$ and the continuity of $a(\cdot,\cdot)$ with $C_a$, there hold
	\begin{equation*}
	\begin{split}	
	\beta_{dis}\|G_hw\|_{W}  
	&\leq \sup_{0 \neq v_h \in V_h} \frac{a(G_hw, v_h)}{\|v_h\|_{V}} \quad \text{(discrete infsup)}\\
	&=\sup_{0 \neq v_h \in V_h} \frac{a(w, v_h)}{\|v_h\|_{V}}\\
	 &\leq  C_a\|w\|_{W} \quad \text{(boundness of $a(\cdot,\cdot)$)}.
	 \end{split}	
	\end{equation*}
	Since $u_h=G_h u$ and $w_h = G_h w_h$ for all $w_h \in W_h$, we conclude
	\begin{equation*}
	\begin{split}
	\|u-u_h\|_{W} &\leq \|u-w_h\|_{W}+\|G_h(w_h-u)\|_{W} \\
	& \leq \Bigg[ 1 + \frac{C_a}{\beta_{dis}} \Bigg] \|u-w_h\|_{W}.
	\end{split}
	\end{equation*}
\end{proof}

\section{Preliminaries on compact perturbations}\label{sec prel ode}
In this Section we recall the definitions of two important classes of bounded linear operators between Hilbert spaces, emphasizing the most important properties that will be useful in Chapter \ref{ch 3}. The main references of this Section are (\cite{sayas, Brezis2011, var_techniques, moiola}). 
\begin{definizione}[Compact operator]
	Let $H_1$ and $H_2$ be two Hilbert (or Banach) spaces. A bounded linear operator $K: H_1\rightarrow H_2$ is compact if the image of a bounded sequence admits a converging subsequence, i.e., the image of a bounded set in $H_1$ is pre-compact in $H_2$.
\end{definizione} 

\begin{definizione}[Fredholm operator]\label{fredh}
	Let $H_1$ and $H_2$ be two Hilbert spaces. A bounded linear operator $T: H_1\rightarrow H_2$ is a Fredholm operator of index 0 if it is the sum of an invertible one and a compact one.
\end{definizione} 
\begin{oss}
	Actually, definition \ref{fredh} is a possible characterization of Fredholm operators of index 0. E.g. (\cite{Brezis2011}) defines $T: H_1\rightarrow H_2$ (bounded linear operator between Hilbert spaces) as Fredholm operator of $Ind(T):=dim(KerT)-dim(ImT)^\perp$ if $dim(KerT), \ dim(ImT)^\perp < \infty$. For a proof of the equivalence with this classic definition we refer to (\cite{moiola}). 
\end{oss}
Henceforth, Fredholm operators of index 0 will be refer to as Fredholm operators.

An important result is the \textit{Fredholm alternative}, which, in its simplest form, reads as follows; see (\cite{Brezis2011}).
\begin{theorem}[Fredholm alternative]\label{fredh th}
	Let $T: H_1\rightarrow H_2$ be a Fredholm operator. Then $T$ is injective if and only if it is surjective. In this case its inverse is bounded.
\end{theorem}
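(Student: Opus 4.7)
The plan is to reduce the statement to the classical Riesz--Schauder theory for compact perturbations of the identity, and then to conclude by the bounded inverse theorem.

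By Definition \ref{fredh}, we may write $T = A + K$ with $A : H_1 \to H_2$ a Hilbert space isomorphism and $K : H_1 \to H_2$ compact. Since $A$ is a bounded linear bijection between Banach spaces, the bounded inverse theorem gives that $A^{-1}: H_2 \to H_1$ is bounded, and the composition $S := A^{-1} K : H_1 \to H_1$ is compact (a bounded operator composed with a compact one is compact). The factorisation $T = A(I_{H_1} + S)$ then shows that $T$ is injective (resp.\ surjective) if and only if $I_{H_1} + S$ is injective (resp.\ surjective). Thus the statement reduces to proving the Fredholm alternative for operators of the form \emph{identity plus compact}, which is exactly the setting of the Riesz--Schauder theorems in (\cite{Brezis2011}, Chapter 6).

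At that point I would invoke the following standard facts about $I_{H_1} + S$ with $S$ compact: \emph{(i)} $\ker(I_{H_1} + S)$ is finite-dimensional, \emph{(ii)} $\mathrm{Im}(I_{H_1} + S)$ is closed, and \emph{(iii)} $\mathrm{Im}(I_{H_1} + S) = \ker(I_{H_1} + S^*)^\perp$ together with the index equality $\dim\ker(I_{H_1} + S) = \dim\ker(I_{H_1} + S^*)$. From these one reads off the desired equivalence: $I_{H_1}+S$ is injective iff $\ker(I_{H_1}+S) = \{0\}$ iff, by the index equality, $\ker(I_{H_1}+S^*) = \{0\}$ iff, by \emph{(ii)} and \emph{(iii)}, $\mathrm{Im}(I_{H_1}+S) = H_1$, i.e.\ $I_{H_1}+S$ is surjective. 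Transporting back through $A$ yields that $T$ is injective if and only if $T$ is surjective.

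For the last claim, suppose $T$ is bijective. Then $T : H_1 \to H_2$ is a bounded linear bijection between Hilbert (hence Banach) spaces, and a second application of the bounded inverse theorem shows that $T^{-1}$ is continuous. The main obstacle in this programme is property \emph{(iii)} above, i.e.\ the nontrivial equality $\dim\ker(I_{H_1}+S) = \dim\ker(I_{H_1}+S^*)$, which lies at the heart of Fredholm theory; since (\cite{Brezis2011}) is cited and the section is preliminary in nature, I would simply quote it rather than reprove it.
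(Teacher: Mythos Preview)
Your argument is correct: the factorisation $T = A(I_{H_1} + A^{-1}K)$ reduces the claim to the classical Riesz--Schauder alternative for $I+S$ with $S$ compact, and the bounded inverse theorem handles the continuity of $T^{-1}$. The paper itself does not supply a proof of this theorem at all---it is stated as a preliminary result with a reference to (\cite{Brezis2011}), followed only by a remark that the boundedness of the inverse is due to the bounded inverse theorem---so your sketch in fact goes further than the paper does, and is entirely in line with the cited source.
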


\begin{oss}
	Note that the boundedness of the inverse of an invertible Fredholm operator is due to the ``bounded inverse Theorem''.
\end{oss}

\begin{oss}
	In a finite dimensional setting Fredholm operators are precisely those associated to square matrices. Indeed, an invertible linear operator between finite-dimensional spaces corresponds to an invertible square matrix, and all finite-range operators are compact, since all bounded sequences of $\mathbb{R}^n$ admits converging subsequences. Thus, Theorem \ref{fredh} is an extension of the finite-dimensional case.
\end{oss}
\hspace{0.2cm}
\\
\textbf{Abstract problem and Galerkin method}. Let $H$ be an Hilbert space and $\mathcal{F} \in H'$. Let now consider the following variational problem
\begin{equation}\label{compact var}
\begin{cases}
\text{Find} \ u \in H \quad \text{such that}\\
a(u,v):=b(u,v)+d(u,v)=\langle\mathcal{F},v \rangle_{H',H} \quad \forall v \in H,
\end{cases}
\end{equation}
where:
\begin{itemize}
	\item[1.] The bilinear form $b(\cdot,\cdot)$ is bounded and coercive.
	\item[2.] The bilinear form $d(\cdot,\cdot)$ defines a compact operator $\mathcal{D}: H \rightarrow H'$ by setting
	\begin{equation*}
	\langle \mathcal{D}u,v \rangle_{H',H}:=d(u,v), \quad u,v \in H.
	\end{equation*}
	\item[3.] The linear operator $\mathcal{A}:=\mathcal{B}+\mathcal{D}: H \rightarrow H'$ associated to the bilinear form $a(\cdot,\cdot)$ is injective, where
	\begin{equation*}
	\langle \mathcal{B}u,v \rangle_{H',H}:=b(u,v) \quad u,v \in H.
	\end{equation*}
\end{itemize}
Assumptions $1$ and $2$, and Lax-Milgram Lemma \ref{lax-milgram}, ensure that the operator $\mathcal{A}$ is Fredholm. As a consequence of assumption 3 and Theorem \ref{fredh}, problem \eqref{compact var} is well-posed.

Let now consider a family of finite-dimensional subspaces $V_h \subset H$ directed in a real non-negative parameter $h \rightarrow 0$ and let $\pi_h: H \rightarrow V_{h}$ be the orthogonal projection. Let us also assume that
\begin{equation}
\pi_h u \underset{h \rightarrow 0}{\longrightarrow} u, \quad \forall u \in H,
\end{equation}
i.e., $(V_{h})_h$ is a dense discrete family of subspaces in $H$.
As in Section \ref{galerkin method}, we refer to the following problem as the Galerkin approximation of \eqref{compact var}:
\begin{equation}\label{gal comp}
\begin{cases}
\text{Find} \ u_h \in V_{h} \quad \text{such that}\\
a(u_h,v_h)=\langle\mathcal{F}_{|V_h},v_h \rangle \quad \forall v_h \in V_{h}.
\end{cases}
\end{equation}
In our work the subscript $h$ corresponds to the sequence of mesh-sizes of our discretization. 

The results that we are going to recall establish the uniform (w.r.t. $h$) well posedness of problem \eqref{gal comp}, provided the mesh-size is \textit{small enough} (Proposition 8.8 of \cite{var_techniques}). Moreover, a quasi-optimality estimate will follow.

\begin{prop}\label{prop infsup comp}
	In the hypothesis (1)-(3) for the bilinear forms, there exist two constants $C,\overline{h}>0$ such that the following inf-sup estimate holds
	\begin{equation*}
	\beta \|u_h\|_{H} \leq  \sup_{0 \neq u_h \in V_{h}} \frac{b(u_h,v_h)+d(u_h,v_h)}{\|v_h\|_H} \quad \forall u_h \in V_{h}, \ \forall h \leq \overline{h}.
	\end{equation*}
\end{prop}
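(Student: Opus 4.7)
The plan is to argue by contradiction, exploiting the compactness of $\mathcal{D}$, the injectivity of $\mathcal{A}$, and the density of the family $(V_h)_h$. The key structural ingredient, which motivates the contradiction strategy, is that the inf-sup bound cannot possibly be proved by purely local/algebraic manipulation: the compact operator $\mathcal{D}$ must be ``resolved'' by extracting a strongly convergent subsequence, and this kind of qualitative information fits naturally into a compactness argument.

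Concretely, suppose the conclusion fails. Then there exist sequences $h_n \to 0$ and $u_n \in V_{h_n}$ with $\|u_n\|_H = 1$ such that
\begin{equation*}
S_n := \sup_{0 \neq v_h \in V_{h_n}} \frac{a(u_n, v_h)}{\|v_h\|_H} \longrightarrow 0.
\end{equation*}
Since $(u_n)$ is bounded in $H$, I would extract a subsequence (still denoted $u_n$) with $u_n \rightharpoonup u$ weakly in $H$ for some $u \in H$. By compactness of $\mathcal{D}: H \to H'$, one gets $\mathcal{D} u_n \to \mathcal{D} u$ strongly in $H'$. The first task is to identify the limit: for any test $w \in H$, I would set $w_n := \pi_{h_n} w \in V_{h_n}$, which satisfies $w_n \to w$ strongly in $H$ by the density hypothesis, and pass to the limit in $a(u_n, w_n) = b(u_n, w_n) + \langle \mathcal{D} u_n, w_n\rangle$. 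The $b$-term converges to $b(u,w)$ by weak-strong convergence and boundedness of $b$; the $d$-term converges to $\langle \mathcal{D}u, w\rangle = d(u,w)$ by strong-strong convergence. Since $|a(u_n, w_n)| \leq S_n \|w_n\|_H \to 0$, this identifies $a(u, w) = 0$ for every $w \in H$, hence $\mathcal{A} u = 0$, which by hypothesis~(3) forces $u = 0$.

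Having established $u_n \rightharpoonup 0$, and hence $\mathcal{D} u_n \to 0$ strongly in $H'$, I would close the argument using the coercivity of $b$ on the test $v_h = u_n \in V_{h_n}$:
\begin{equation*}
\alpha_b \|u_n\|_H^2 \leq b(u_n, u_n) = a(u_n, u_n) - d(u_n, u_n).
\end{equation*}
The first term is bounded in modulus by $S_n \|u_n\|_H = S_n \to 0$, while $|d(u_n, u_n)| \leq \|\mathcal{D} u_n\|_{H'}\|u_n\|_H \to 0$. This contradicts $\|u_n\|_H = 1$ and closes the proof; the resulting constant $C$ and threshold $\overline{h}$ are extracted from the (non-quantitative) contradiction.

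The main obstacle I anticipate is being careful about what the density hypothesis really gives: the argument uses $\pi_{h_n} w \to w$ in $H$ for arbitrary fixed $w \in H$, which is exactly the stated assumption, but it is crucial that this is used \emph{before} taking the limit and independently of the sequence $u_n$. A secondary, more subtle point is that the weak-strong pairing $\langle \mathcal{D}u_n, w_n\rangle$ requires handling two simultaneously varying factors; writing $\langle \mathcal{D}u_n, w_n\rangle - \langle \mathcal{D}u, w\rangle = \langle \mathcal{D}u_n - \mathcal{D}u, w_n\rangle + \langle \mathcal{D}u, w_n - w\rangle$ and using that $(w_n)$ is bounded makes this routine, but it is the place where the compactness hypothesis on $\mathcal{D}$ is essentially consumed.
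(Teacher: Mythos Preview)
Your argument is correct and follows precisely the approach indicated by the paper: the statement is quoted from Proposition~8.8 of \cite{var_techniques} without an explicit proof, and the subsequent remark notes that ``the proof of this result is made by contradiction,'' which is exactly what you do. The extraction of a weakly convergent subsequence, the use of compactness of $\mathcal{D}$ to upgrade to strong convergence in $H'$, the identification of the weak limit via density of $(V_h)_h$ and injectivity of $\mathcal{A}$, and the final contradiction via coercivity of $b$ constitute the standard proof of this result, and your handling of the weak--strong pairings is accurate.
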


\begin{oss}
	The bound on the mesh-size and the inf-sup constant are not explicit because the proof of this result is made by contradiction.
\end{oss}

\begin{cor}\label{well-pos comp dis}
	Let the hypothesis of Proposition \ref{prop infsup comp} be satisfied. Let $h \leq \overline{h}$. Then problem \eqref{gal comp} is well-posed, with the following stability estimate
	\begin{equation*}
	\|u_h\|_H \leq \frac{1}{\beta} \|\mathcal{F}\|_{H'}, \quad \mathcal{F} \in H'.
	\end{equation*}
\end{cor}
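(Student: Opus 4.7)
The plan is to derive the corollary as a direct consequence of the discrete Banach--Ne\v{c}as--Babu\v{s}ka framework, using Proposition \ref{prop infsup comp} as the crucial ingredient and exploiting the fact that \eqref{gal comp} is a Galerkin--Bubnov method in a finite-dimensional space.

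First I would recall the abstract setting: problem \eqref{gal comp} is of the form \eqref{galerkin bubnov} with $W_h = V_h$, where $V_h$ is finite-dimensional. By Theorem \ref{BNB}, in order to conclude well-posedness it suffices to verify the two discrete conditions \eqref{infsup discreta} and \eqref{non null discreta}. The first of these, with the explicit constant $\beta$ independent of $h$, is exactly the statement of Proposition \ref{prop infsup comp}, provided $h \leq \overline{h}$. The second condition is automatic in our setting: since $\dim(W_h) = \dim(V_h) < \infty$, point 6 of Remark \ref{remark discrete system} tells us that \eqref{infsup discreta} is equivalent to \eqref{non null discreta}, so no extra argument is needed.

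Having verified both BNB conditions, I would invoke Theorem \ref{BNB} to conclude the well-posedness of \eqref{gal comp}. The a priori estimate is then obtained from the standard chain of inequalities associated with the inf-sup condition: for the unique solution $u_h \in V_h$ one has
\begin{equation*}
\beta \|u_h\|_H \leq \sup_{0 \neq v_h \in V_h} \frac{a(u_h,v_h)}{\|v_h\|_H} = \sup_{0 \neq v_h \in V_h} \frac{\langle \mathcal{F}_{|V_h}, v_h\rangle}{\|v_h\|_H} \leq \|\mathcal{F}\|_{H'},
\end{equation*}
from which the claimed stability estimate $\|u_h\|_H \leq \beta^{-1}\|\mathcal{F}\|_{H'}$ follows immediately.

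There is essentially no obstacle here, since all the heavy lifting (the compact-perturbation argument ensuring that the discrete inf-sup survives from the continuous problem for $h$ small enough) has already been done in Proposition \ref{prop infsup comp}. The only thing to keep in mind is to make explicit the use of the finite-dimensional equivalence between the inf-sup condition and the non-degeneracy condition \eqref{non null discreta}, which is what allows us to bypass a separate uniqueness argument.
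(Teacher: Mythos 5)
Your proof is correct and follows exactly the route the paper indicates: the paper's own justification is the single remark that the corollary ``is a consequence of Remark \ref{remark discrete system} and of BNB Theorem \ref{BNB}'', which is precisely your combination of the uniform discrete inf-sup from Proposition \ref{prop infsup comp}, the finite-dimensional equivalence of \eqref{infsup discreta} and \eqref{non null discreta}, and the standard inf-sup chain of inequalities for the a priori bound. Nothing to add.
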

Note that Corollary \ref{well-pos comp dis} is a consequence  of Remark \ref{remark discrete system} and of BNB Theorem \ref{BNB}.
\begin{cor}\label{céa comp}
	Under the hypothesis of Proposition \ref{prop infsup comp}, let $h \leq \overline{h}$.  Then a quasi-optimality estimate holds
	\begin{equation}\label{quas opt comp}
	\|u-u_h\|_H \leq \Bigg(1+\frac{\|\mathcal{B}+\mathcal{D}\|}{\beta} \Bigg) \inf_{v_h \in V_{h}} \|u-v_h\|_H,
	\end{equation}
	where $u,u_h$ are the solutions of \eqref{compact var},\eqref{gal comp} respectively and with $\|\cdot\|$ we denote the operator norm of linear bounded maps from $H$ to $H'$. 
\end{cor}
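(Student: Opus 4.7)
The plan is to mimic the argument of Proposition \ref{err BNB}, since Corollary \ref{céa comp} is essentially Céa's lemma in the Banach--Ne\v cas--Babu\v ska setting, now applied to the perturbed bilinear form $a = b+d$. The two ingredients I already have at my disposal, for $h \leq \overline{h}$, are the uniform discrete inf-sup estimate from Proposition \ref{prop infsup comp} with constant $\beta$, and the Galerkin orthogonality $a(u-u_h,v_h)=0$ for all $v_h \in V_h$, which follows immediately from subtracting \eqref{gal comp} from \eqref{compact var} restricted to test functions in $V_h$.

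The first step is to fix an arbitrary $w_h \in V_h$ and write $u_h - w_h \in V_h$. Applying the discrete inf-sup inequality to $u_h - w_h$ and then using the linearity of $a(\cdot,\cdot)$ together with Galerkin orthogonality, I get
\begin{equation*}
\beta \, \|u_h - w_h\|_H \leq \sup_{0 \neq v_h \in V_h} \frac{a(u_h - w_h, v_h)}{\|v_h\|_H} = \sup_{0 \neq v_h \in V_h} \frac{a(u - w_h, v_h)}{\|v_h\|_H} \leq \|\mathcal{B}+\mathcal{D}\| \, \|u - w_h\|_H,
\end{equation*}
where the last step uses the boundedness of $\mathcal{A} = \mathcal{B} + \mathcal{D}: H \to H'$, whose operator norm coincides with the continuity constant of $a(\cdot,\cdot)$.

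The second step is the standard triangle-inequality splitting
\begin{equation*}
\|u - u_h\|_H \leq \|u - w_h\|_H + \|w_h - u_h\|_H \leq \Bigl(1 + \frac{\|\mathcal{B}+\mathcal{D}\|}{\beta}\Bigr) \|u - w_h\|_H,
\end{equation*}
after which taking the infimum over $w_h \in V_h$ yields the desired bound. There is really no main obstacle here: the only subtlety is to correctly identify the continuity constant with $\|\mathcal{B}+\mathcal{D}\|$ (rather than $\|\mathcal{B}\| + \|\mathcal{D}\|$) in order to match the form of the statement, and to make sure that the restriction $h \leq \overline{h}$ is invoked so that the discrete inf-sup constant $\beta$ is genuinely uniform, which is exactly what makes the quasi-optimality estimate meaningful as $h \to 0$.
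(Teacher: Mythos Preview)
Your proof is correct and follows essentially the same approach as the paper, which simply invokes Proposition \ref{err BNB} with continuity constant $C_a=\|\mathcal{B}+\mathcal{D}\|$ and discrete inf-sup constant $\beta_{dis}=\beta$. Your argument is just the proof of that proposition unfolded in this particular setting, using Galerkin orthogonality directly rather than phrasing it via the Galerkin projection $G_h$.
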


\begin{oss}
	Note that Corollary \ref{céa comp} is a consequence of Proposition \ref{err BNB} and tells that we eventually reach convergence for $h \rightarrow 0$, since the succession of discrete spaces is dense in $H$.
\end{oss}

\subsubsection{Galerkin method applied to G\aa rding-type problems}
In this Section we consider a special case of Fredholm operators. Roughly speaking, we consider Fredholm operators whose compact perturbation we can "quantify". For these operators it is possible to have stability and error estimates with explicit constants. The theory we are interested in is that of \textit{Galerkin method applied to G\aa rding-type problems}. Our main reference are (\cite{spence14, moiola}).

Let $H$ be a real Hilbert space, $V_h \subset H$ a finite-dimensional subspace, $a(\cdot,\cdot)$ and $\mathcal{F}(\cdot)$ a bounded bilinear form and a bounded linear operator on $H$, respectively. Let us now consider the following abstract variational problem 
\begin{equation}\label{var gard}
\begin{cases}
\text{Find} \ u \in H \ \text{such that}\\
a(u,v)=\langle \mathcal{F},v\rangle_{H', H} \quad \forall v \in H,
\end{cases}
\end{equation}
and its Galerkin discretization
\begin{equation}\label{gal gard}
\begin{cases}
\text{Find} \ u_h \in V_h \ \text{such that}\\
a(u_h,v_h)=\langle \mathcal{F}_{|V_h},v_h\rangle_{V_h', V_h} \quad \forall v_h \in V_h.
\end{cases}
\end{equation}

\begin{theorem}[Galerkin method with G\aa rding inequality]\label{theo gard}
	Let $H \subset V$ be real Hilbert spaces and the inclusion be compact. Let $a(\cdot,\cdot)$ be a bounded bilinear form on $H$ with respect to a continuity constant $C_a >0$:
	\begin{equation*}
	|a(v,w)|\leq C_a \|v\|_H \|w\|_H \quad \forall v,w \in H, 
	\end{equation*}
	that satisfies the G\aa rding inequality with respect to $\alpha, C_v >0$
	\begin{equation}\label{gard}
	a(v,v) \geq \alpha \|v\|_H^2-C_V \|v\|_V^2  \quad \forall v \in H.
	\end{equation}
	Assume that the only $u_0 \in H$ such that $a(u_0,v)=0$ for all $v \in H$ is $u_0=0$ (so that the variational problem \eqref{var gard} is well-posed for any right-hand side). Let $\mathcal{F}(\cdot)$ be a bounded linear operator on H and $u$ be the solution of the variational problem \eqref{var gard}.\\
	Given $g \in V$, let $z_g \in H$ be the solution of the adjoint problem
	\begin{equation}\label{adj}
	a(v,z_g)=(g,v)_V \quad \forall v \in H,
	\end{equation}
	where $(\cdot,\cdot)_V$ is the scalar product in $V$. Let $V_h \subset H$ be a finite-dimensional subspace of H and define
	\begin{equation}\label{eta gard}
	\eta(V_h):=\sup_{0 \neq g \in V} \inf_{v_h \in V_h} \frac{\|z_g-v_h\|_H}{\|g\|_V}.
	\end{equation}
	If $\eta(V_h)$ satisfies the threshold condition
	\begin{equation}\label{cond eta}
	\eta(V_h) \leq \frac{1}{C_a} \sqrt{\frac{\alpha}{2 C_V}},
	\end{equation}
	then the Galerkin method \eqref{gal gard} is well-posed with the following stability estimate
	\begin{equation*}
	\|u_h\|_H \leq C_{stab}\Bigg(1+\frac{2C_a}{\alpha}\Bigg)\|\mathcal{F}\|_{H'},
	\end{equation*}
	where $C_{stab}$ is the stability constant of the abstract problem \eqref{var gard}, and its solution $u_h$ satisfies the quasi-optimality bound
	\begin{equation*}
	\|u-u_h\|_H \leq \frac{2 C_a}{\alpha} \inf_{v_h \in V_h} \|u-v_h\|_H.
	\end{equation*}
\end{theorem}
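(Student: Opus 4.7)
The plan is to apply a Schatz-type duality argument tailored to Gårding bilinear forms, built on two pillars: the Gårding inequality \eqref{gard}, which lower-bounds $a(v,v)$ by $\alpha\|v\|_H^2$ modulo the weaker $V$-norm, and a duality estimate that controls the $V$-norm of any element of $V_h$ satisfying a Galerkin-type orthogonality by its $H$-norm, with $\eta(V_h)$ as the proportionality constant. The threshold condition \eqref{cond eta} is precisely what is needed to absorb the duality term back into the Gårding left-hand side.

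First I would prove well-posedness of the Galerkin system. Since it is a square system, it suffices to establish injectivity. So I would take $u_h\in V_h$ with $a(u_h,v_h)=0$ for all $v_h\in V_h$ and solve the adjoint problem \eqref{adj} with datum $g=u_h\in H\subset V$; this adjoint problem is well-posed because the assumption on $a(\cdot,\cdot)$ makes the primal operator $\mathcal{A}$ an isomorphism, and hence so is $\mathcal{A}^{*}$. Testing the adjoint against $v=u_h$ and exploiting the Galerkin-type identity $a(u_h,v_h)=0$ gives $\|u_h\|_V^2=a(u_h,z_{u_h}-v_h)$ for every $v_h\in V_h$. Continuity of $a(\cdot,\cdot)$ with constant $C_a$ together with the definition \eqref{eta gard} then yields $\|u_h\|_V\leq C_a\,\eta(V_h)\,\|u_h\|_H$. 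Plugging this into Gårding applied to $u_h$ (with $a(u_h,u_h)=0$) and invoking the threshold \eqref{cond eta} forces $u_h=0$.

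Next I would run the same duality argument with $u-u_h$ in place of $u_h$, using Galerkin orthogonality $a(u-u_h,v_h)=0$ to derive $\|u-u_h\|_V\leq C_a\,\eta(V_h)\,\|u-u_h\|_H$. Applying Gårding to $u-u_h$, splitting $u-u_h=(u-v_h)+(v_h-u_h)$, and using Galerkin orthogonality once more produces
\begin{equation*}
\alpha\|u-u_h\|_H^2\leq C_a\|u-u_h\|_H\|u-v_h\|_H+C_V C_a^2\,\eta(V_h)^2\|u-u_h\|_H^2.
\end{equation*}
The threshold \eqref{cond eta} is calibrated precisely so that $C_V C_a^2\eta(V_h)^2\leq \alpha/2$; absorbing this term on the left and taking the infimum over $v_h\in V_h$ yields the quasi-optimality constant $2C_a/\alpha$. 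The stability estimate then follows immediately: choosing $v_h=0$ gives $\|u-u_h\|_H\leq (2C_a/\alpha)\|u\|_H$, and combining with the triangle inequality and the abstract stability bound $\|u\|_H\leq C_{stab}\|\mathcal{F}\|_{H'}$ produces the claimed factor $C_{stab}(1+2C_a/\alpha)$.

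The step I expect to be the main obstacle is the duality estimate for $\|u-u_h\|_V$: although $\|u-u_h\|_V$ is a strictly weaker norm than $\|u-u_h\|_H$, the compactness of the embedding $H\hookrightarrow V$ is what lets one trade a loss in regularity for a gain in $h$, and that gain is quantified exactly by $\eta(V_h)$. Identifying $\eta(V_h)$ as the correct quantity to measure this trade-off, and recognising that the threshold \eqref{cond eta} is the sharp condition for the absorption argument to close, is the conceptual core of the proof; once this is in place, the remainder of the argument is bookkeeping of constants.
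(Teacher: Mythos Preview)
Your proof is correct and follows the classical Schatz duality argument, which is precisely the approach of the references the paper cites for this result (\cite{spence14, moiola}). Note, however, that the paper itself does not supply a proof of this theorem: it is stated as a known result from the literature, followed only by Remark~\ref{oss gard} explaining why the hypotheses guarantee well-posedness of the primal and adjoint problems. Your write-up therefore fills in exactly what the paper leaves to its references, and every step---the duality bound $\|u-u_h\|_V\le C_a\,\eta(V_h)\,\|u-u_h\|_H$ via the adjoint problem and Galerkin orthogonality, the absorption using \eqref{cond eta}, and the deduction of stability from quasi-optimality with $v_h=0$---is carried out correctly with the stated constants.
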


\begin{oss}\label{oss gard}
	Note the following facts.
	\begin{itemize}
		\item [1.] The G\aa rding inequality \eqref{gard} and the compact inclusion $H \subset V$ imply that the \textit{solution-to-data} operator $\mathcal{A}: H \rightarrow H'$,$u \mapsto a(u,\cdot)$ is Fredholm (of index $0$). For a proof we refer to (\cite{spence14}). Therefore, the assumption
		\begin{center}
			``the only $u_0 \in H$ such that $a(u_0,v)=0$ for all $v \in H$ is $u_0=0$''
		\end{center}
		implies the well-posedness of the variational problem \eqref{var gard}, since the above assumption is equivalent to the injectivity of $\mathcal{A}$. In particular we can apply Theorem \ref{fredh th}.
		\item[2.] The well-posedness of the adjoint problem \eqref{adj} (w.r.t. the dual norm $\|g\|_{H'}$, and hence w.r.t. $\|g\|_V$) is a consequence of the well-posedness of the primal problem \eqref{var gard}. Indeed, if $\mathcal{A}:H \rightarrow H'$ is an isomorphism, its adjoint operator $\mathcal{A^*}:H \rightarrow H'$ is an isomorphism, with $(\mathcal{A}^*)^{-1}=(\mathcal{A}^{-1})^*$, see (\cite{adjoint}). Also, as a consequence of $(\mathcal{A}^*)^{-1}=(\mathcal{A}^{-1})^*$, there holds $\|(\mathcal{A}^*)^{-1}\|=\|\mathcal{A}^{-1}\|$. Hence, the stability constant of the adjoint problem (w.r.t. the dual norm $\|g\|_{H'}$) is equal to the stability constant of the primal problem.
		\item [3.] The parameter $\eta(V_h)$ precisely quantify how well $V_h$ approximates the solution of the adjoint problem, whose data $g$ is an element of the larger space $V$. Thus, roughly speaking, Theorem \ref{theo gard} states that if the discrete space is sufficiently fine, the Galerkin method applied to a G\aa rding-type (well-posed) problem is well-posed, stable and quasi-optimal.
	\end{itemize}
\end{oss}
 
% Chapter 3

\chapter{Second-order ordinary differential equation}\label{ch 3} % Main chapter title

\label{Chapter3} % For referencing the chapter elsewhere, use \ref{Chapter3} 

%----------------------------------------------------------------------------------------

% Define some commands to keep the formatting separated from the content 

%----------------------------------------------------------------------------------------

\section{Variational formulation for $\partial_{tt}u+ \mu u=f$}\label{sect var form}
As a model problem we consider the following second-order linear equation: 
\begin{equation}\label{eq ode}
\partial_{tt}u(t)+ \mu u(t)=f(t), \quad \text{for} \ t \in (0,T), \quad u(0)=\partial_{t}u(t)_{|t=0}=0,
\end{equation}
where $\mu >0$.

The variational formulation of \eqref{eq ode} reads as follows:
\begin{equation}\label{var ode}
\begin{cases}
\text{Find} \ u \in H^1_{0,*}(0,T) \quad \text{such that}\\
a(u,v)=\langle f,v \rangle_{(0,T)} \quad  \forall v \in H^1_{*,0}(0,T),
\end{cases}
\end{equation}
where $T>0$ and $f \in [H^1_{*,0}(0,T)]' $ are given, and where the bilinear form $a(\cdot,\cdot): H^1_{0,*}(0,T) \times H^1_{*,0}(0,T) \longrightarrow \mathbb{R}$ is 
\begin{equation}\label{bil ode}
a(u,v):=-\langle \partial_{t}u, \partial_{t}v \rangle_{L^2(0,T)}+ \mu \langle u,v \rangle_{L^2(0,T)},
\end{equation} 
for all $u \in H^1_{0,*}(0,T)$, $v \in H^1_{*,0}(0,T)$. The notation $\langle \cdot,\cdot \rangle_{(0,T)}$ denotes the duality pairing as extension of the inner product in $L^2(0,T)$, and the Sobolev spaces $H^1_{0,*}(0,T), H^1_{*,0}(0,T)$ are introduced in Section \ref{sec sobolev}. Note that the first initial condition $u(0)=0$ is incorporated in the solution space $H^1_{0,*}(0,T)$, whereas the second initial condition $\partial_{t}u(t)_{|t=0}=0$ is considered as a natural
condition in the variational formulation.

Thanks to the Cauchy-Schwarz inequality and the Poincaré inequality \eqref{Poinc}, the bilinear form $a(\cdot,\cdot)$ is bounded with
\begin{equation}\label{cont of a}
|a(u,v)| \leq \Bigg(1+\frac{4 T^2 \mu }{\pi^2} \Bigg) |u|_{H^1(0,T)}|v|_{H^1(0,T)},
\end{equation}
for all $(u,v) \in H^1_{0,*}(0,T) \times H^1_{*,0}(0,T)$. 

In order to prove the well-posedness of \eqref{var ode} we consider an equivalent variational problem. Thus, we introduce the isomorphism
\begin{gather}\label{HT}
\overline{\mathcal{H}}_T: H^1_{0,*}(0,T) \longrightarrow H^1_{*,0}(0,T)\\
u \mapsto u(T)-u(\cdot),
\end{gather}
where its inverse is given by
\begin{gather*}
\overline{\mathcal{H}}_T^{-1}: H^1_{*,0}(0,T) \longrightarrow H^1_{0,*}(0,T)\\
v \mapsto v(0)-v(\cdot).
\end{gather*}
Since the above mappings are actually isometries with respect to $|\cdot|_{H^1(0,T)}$, then the well-posedness of  \eqref{var ode} is equivalent to the well-posedness (with the same stability constant) of the following problem
\begin{equation}\label{var ode equiv}
\begin{cases}
\text{Find} \ u \in H^1_{0,*}(0,T) \ \text{such that}\\
a(u,\overline{\mathcal{H}}_Tv)=\langle f,\overline{\mathcal{H}}_Tv \rangle_{(0,T)} \quad  \forall v \in H^1_{0,*}(0,T),
\end{cases}
\end{equation}
where the solution and test space coincide. Note that the continuity of the bilinear form in \eqref{var ode equiv} is an immediate consequence of estimate \eqref{cont of a}. Unfortunately, at least for $\mu$ sufficiently large, the bilinear form of \eqref{var ode equiv} is not coercive, hence, we cannot rely on the classical Lax-Milgram Lemma \ref{lax-milgram}, but we need more specific tools. The branch of functional analysis that we exploit is called \textit{Fredholm theory} and studies compact perturbations of linear bounded operators. In particular, there holds the following Theorem.

\begin{theorem}[\cite{Zank2020}]\label{zank}
	For a given $f \in [H^1_{*,0}(0,T)]'$, there exists a unique solution $u \in H^1_{0,*}(0,T)$ of the variational formulation \eqref{var ode equiv}, and the following a priori estimate holds
	\begin{equation}\label{stab ode}
	|u|_{H^1(0,T)}\leq \Bigg(\frac{2+\sqrt{\mu}T}{2}\Bigg) \|f\|_{[H^1_{*,0}(0,T)]'}.
	\end{equation}
	In addition, \eqref{stab ode} is optimal with respect to the order of $\mu$ and $T$.
\end{theorem}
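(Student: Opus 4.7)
The plan is to obtain well-posedness of \eqref{var ode equiv} from the Fredholm framework of Section \ref{sec prel ode}, and then to extract the sharp constant by a direct argument on the explicit representation of $u$.

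First, I rewrite the bilinear form in the equivalent problem. Since $\overline{\mathcal{H}}_T v(t) = v(T) - v(t)$, and therefore $\partial_t \overline{\mathcal{H}}_T v = -\partial_t v$, one obtains
\begin{equation*}
a(u, \overline{\mathcal{H}}_T v) = \langle \partial_t u, \partial_t v\rangle_{L^2(0,T)} + \mu \, v(T)\, \langle u, 1\rangle_{L^2(0,T)} - \mu \langle u, v\rangle_{L^2(0,T)}.
\end{equation*}
I set $b(u,v) := \langle \partial_t u, \partial_t v\rangle_{L^2(0,T)}$, which is bounded and coercive on $H^1_{0,*}(0,T)$ with constant $1$ because $|\cdot|_{H^1(0,T)}$ is the Hilbert norm on $H^1_{0,*}(0,T)$, and I collect the remaining lower-order terms in $d(u,v) := \mu\, v(T)\langle u,1\rangle_{L^2(0,T)} - \mu \langle u,v\rangle_{L^2(0,T)}$.

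The second step is to check that the bounded operator associated with $d(\cdot,\cdot)$, from $H^1_{0,*}(0,T)$ to $[H^1_{0,*}(0,T)]'$, is compact. The summand $-\mu\langle u,v\rangle_{L^2(0,T)}$ factors through the Rellich compact embedding $H^1_{0,*}(0,T) \hookrightarrow L^2(0,T)$, while $\mu\, v(T)\langle u,1\rangle_{L^2(0,T)}$ has rank one in $v$ and is therefore also compact. Consequently the operator associated with $a(\cdot, \overline{\mathcal{H}}_T \cdot)$ is Fredholm of index $0$, and by the Fredholm alternative (Theorem \ref{fredh th}) it suffices to prove injectivity. If $a(u, \overline{\mathcal{H}}_T v) = 0$ for all $v \in H^1_{0,*}(0,T)$, which via the isomorphism $\overline{\mathcal{H}}_T$ is equivalent to $a(u,w) = 0$ for all $w \in H^1_{*,0}(0,T)$, then test functions $w \in C_c^\infty(0,T)$ yield $\partial_{tt}u + \mu u = 0$ distributionally, hence $u \in H^2(0,T)$, and integration by parts against a general $w$ with $w(0) \neq 0$, $w(T) = 0$ extracts the natural condition $\partial_t u(0) = 0$; together with $u(0) = 0$, uniqueness for the initial value problem forces $u \equiv 0$, so existence and uniqueness follow.

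For the quantitative bound \eqref{stab ode}, I would argue by density from $f \in L^2(0,T)$ and use the Duhamel representation
\begin{equation*}
u(t) = \frac{1}{\sqrt{\mu}} \int_0^t \sin\bigl(\sqrt{\mu}(t-s)\bigr) f(s)\, ds, \qquad \partial_t u(t) = \int_0^t \cos\bigl(\sqrt{\mu}(t-s)\bigr) f(s)\, ds.
\end{equation*}
To control $|u|_{H^1(0,T)} = \|\partial_t u\|_{L^2(0,T)}$ in terms of $\|f\|_{[H^1_{*,0}(0,T)]'}$, I would test against an arbitrary $g \in L^2(0,T)$ of unit $L^2$-norm, swap the order of integration, and observe that $\phi(s) := \int_s^T g(t) \cos(\sqrt{\mu}(t-s))\, dt$ lies in $H^1_{*,0}(0,T)$ with $\phi'(s) = -g(s) + \sqrt{\mu}\int_s^T g(t)\sin(\sqrt{\mu}(t-s))\, dt$, so that $\int_0^T g\, \partial_t u\, dt = \int_0^T f\, \phi\, ds \leq \|f\|_{[H^1_{*,0}(0,T)]'}\, |\phi|_{H^1(0,T)}$. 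A careful $L^2$ estimate of the convolution in $\phi'$, possibly sharpened via a spectral/Fourier-series decomposition along the eigenfunctions of $-\partial_{tt}$ with the relevant boundary conditions, would then yield $|\phi|_{H^1(0,T)} \leq \frac{2+\sqrt{\mu}T}{2} \|g\|_{L^2(0,T)}$, and optimality of the orders in $\mu$ and $T$ would be obtained by plugging an explicit saturating right-hand side (e.g.\ a trigonometric polynomial tuned to the eigenfrequencies of the problem) into \eqref{stab ode}. I expect this sharpness step to be the main obstacle: the Fredholm alternative together with ODE uniqueness delivers existence, uniqueness, and qualitative stability almost for free, but pinning down the precise constant $(2+\sqrt{\mu}T)/2$ and verifying its optimality is substantially more technical than the existence argument and is the quantitative heart of the theorem.
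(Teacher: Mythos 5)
Your proposal follows essentially the same route as the paper: the paper likewise splits the operator of \eqref{var ode equiv} into a coercive part $\mathcal{B}$ and a compact part $\mu\mathcal{D}$ (your $b$ and $d$), invokes the Fredholm alternative plus injectivity for well-posedness, and obtains the explicit constant by solving the adjoint problem with a Green's function — which is exactly what your $\phi(s)=\int_s^T g(t)\cos\bigl(\sqrt{\mu}(t-s)\bigr)\,dt$ is — deferring the detailed computation and the optimality claim to \cite{Zank2020}. For what it is worth, your sketched quantitative step does close without any Fourier refinement: Cauchy--Schwarz against the sine kernel gives $\bigl\|\int_s^T g(t)\sin(\sqrt{\mu}(t-s))\,dt\bigr\|_{L^2(0,T)}\le \frac{T}{2}\|g\|_{L^2(0,T)}$, since $\int_0^T\int_0^{T-s}\sin^2(\sqrt{\mu}\tau)\,d\tau\,ds=\frac{T^2}{4}+\frac{\cos(2\sqrt{\mu}T)-1}{8\mu}\le\frac{T^2}{4}$, which yields exactly $|\phi|_{H^1(0,T)}\le\frac{2+\sqrt{\mu}T}{2}\|g\|_{L^2(0,T)}$.
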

The well-posedness of \eqref{var ode equiv} is an immediate consequence of the results presented in Section \ref{sec prel ode}. Indeed, defining two bounded linear operators $\mathcal{B},\mathcal{D}:H^1_{0,*}(0,T) \rightarrow [H^1_{0,*}(0,T)]'$ by
\begin{gather*}
\langle \mathcal{B}u,v \rangle _{(0,T)} := \langle \partial_tu,\partial_t v\rangle_{L^2(0,T)}, \quad u,v \in H^1_{0,*}(0,T)\\
\langle \mathcal{D}u,v \rangle _{(0,T)} := \langle u,v(T)-v \rangle_{L^2(0,T)}, \quad u,v \in H^1_{0,*}(0,T),
\end{gather*}
 the map $\mathcal{B}+\mu \mathcal{D}: H^1_{0,*}(0,T) \rightarrow [H^1_{0,*}(0,T)]'$ is an injective Fredholm operator of index 0, where the compactness of $\mathcal{D}$ follows from the properties of trace operators. However, estimate \eqref{stab ode} gives an explicit dependency relation of its stability constant on $T,\mu$. In order to prove \eqref{stab ode}, we use the notation $\mathcal{A}$ for the bounded linear operator related to the bilinear form \eqref{bil ode}, i.e.,
\begin{gather*}
\mathcal{A}:  H^1_{0,*}(0,T) \rightarrow [H^1_{*,0}(0,T)]' \quad \text{s.t.}\\
\langle \mathcal{A}u,v \rangle _{(0,T)}:=a(u,v), \quad u \in H^1_{0,*}(0,T), v \in H^1_{*,0}(0,T).
\end{gather*}
Since $\mathcal{A}$ is an isomorphism, its adjoint operator $\mathcal{A}^*:H^1_{*,0}(0,T)\rightarrow  [H^1_{0,*}(0,T)]' $ is an isomorphism with $(\mathcal{A}^*)^{-1}=(A^{-1})^*$, see (\cite{adjoint}). As a consequence, given $g \in [H^1_{0,*}(0,T)]'$, the adjoint problem 
\begin{equation}\label{adj dim}
\begin{cases}
\text{Find} \ z \in H^1_{*,0}(0,T) \quad \text{such that}\\
a(w,z)=\langle g,w \rangle_{(0,T)} \quad \forall w \in H^1_{0,*}(0,T)
\end{cases}
\end{equation}
is well-posed. In particular, it is possible to compute the exact solution of problem \eqref{adj dim} using a Green's function, if the right-hand side  $g \in [H^1_{0,*}(0,T)]'$ depends on a fixed $u \in H^1_{0,*}(0,T)$; for more details we refer to (\cite{Zank2020}). For the optimality of the estimate we refer to Theorem 4.2.6 of (\cite{Zank2020}).

\section{Isogeometric discretization}\label{sec iga}
As discrete spaces for the Galerkin discretization of \eqref{var ode} we choose to consider spline spaces of degree two with maximal regularity, i.e., piecewise polynomials of degree two with global $C^1$ regularity.

Given a positive integer $N$, let $\Xi:=\{t_1, \ldots, t_{N+3}\}$ be an open knot vector in $[0,T]$ with the interior knots that appear just once, i.e., $0=t_1=t_2=t_3 < \ldots  <t_{N+1}=t_{N+2}=t_{N+3}=T$. By means of Cox-de Boor recursion formulas \eqref{cox de boor} we define the quadratic univariate B-spline basis functions ${b}_{i,2}: [0,T] \rightarrow \mathbb{R}$ for $i=1, \ldots, N$ (we omit the subscript $\Xi$ to lighten the notation). Thus, the univariate spline space is defined as 
\begin{equation*}
{S}^2_h:= \text{span}\{{b}_{i,2}\}_{i=1}^N,
\end{equation*}
where $h$ is the mesh-size, i.e., $h:=\max{\{|t_{i+1}-t_i|\ : \ i=1,\ldots,N+2}\}$. 

We introduce the spline space with initial conditions as
\begin{equation}\label{iga space}
\begin{split}
V^h_{0,*} &:= {S}^2_h \cap H^1_{0,*}(0,1)
= \{v_h \in {S}^2_h \ : \ {v}_h(0)=0 \}\\
&= \text{span}\{{b}_{i,2}\}_{i=2}^N,
\end{split}
\end{equation}
which is our isogeometric solution space, and the spline space 
\begin{equation}\label{iga test space}
\begin{split}
{V}^h_{*,0} &:= {S}^2_h \cap H^1_{*,0}(0,1)
= \{ {v}_h \in {S}^2_h \ : \ {v}_h(T)=0 \}\\
&= \text{span}\{ {b}_{i,2}\}_{i=1}^{N-1},
\end{split}
\end{equation}
which is our isogeometric test space. Note that $V^h_{0,*}=\text{span}\{{b}_{i,2}\}_{i=2}^N$  and ${V}^h_{*,0}=\text{span}\{ {b}_{i,2}\}_{i=1}^{N-1}$ since the first and last B-spline basis functions of an open knot vector are interpolating functions.

\begin{oss}
	Typically, in isogeometric discretizations, and in the GeoPDEs library that we are going to use for the numerical experiments, one considers the parametric domain $[0,1]^n$ and the splines/NURBS on this domain. Once these spaces are constructed, 
	one maps, via a NURBS function $F$,	
	the parametric domain to the physical domain of interest, \begin{minipage}[r]{0.6\linewidth} 
		\includegraphics[width=\textwidth]{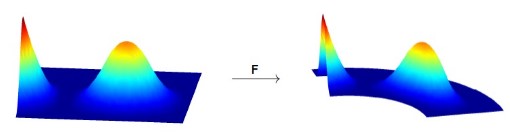}
	\end{minipage} \\ and the discrete solution and test spaces are the pushforward by $F$ of splines/NURBS spaces  on $[0,1]^n$. \\
	In our case, the map $F$ would be $F: [0,1] \rightarrow [0,T]$ s.t. $\tau \mapsto T\tau$. Therefore, doing the pushforward of the spline spaces on the parametric interval, we would obtain spline spaces on $[0,T]$ with B-spline basis functions that are the pushforward of the B-spline basis functions on $[0,1]$. It is then sufficient for us to construct the discrete spaces directly on $[0,T]$.
\end{oss}

A conforming Galerkin-Petrov isogeometric discretization
of \eqref{var ode} is the following problem
\begin{equation}\label{iga ode}
\begin{cases}
\text{Find} \ u_h \in V^h_{0,*} \quad \text{such that}\\
a(u_h,v_h)=\langle f,v_h \rangle_{(0,T)} \quad  \forall v_h \in V^h_{*,0}.
\end{cases}
\end{equation}
As in the continuous framework, the restricted operator
\begin{gather*}
\overline{\mathcal{H}}_{T_{|V^h_{0,*}}}: V^h_{0,*} \longrightarrow V^h_{*,0}\\
u_h \mapsto u_h(T)-u_h(\cdot)
\end{gather*}
is actually an isometric isomorphism with respect to $|\cdot|_{H^1(0,T)}$. Therefore, the well-posedness of \eqref{iga ode} is  equivalent to the well-posedness (with the same stability constant) of the conforming Galerkin-Bubnov isogeometric discretization of \eqref{var ode equiv}:
\begin{equation}\label{iga ode equiv}
\begin{cases}
\text{Find} \ u_h \in V^h_{0,*} \ \text{such that}\\
a(u_h,\overline{\mathcal{H}}_Tv_h)=\langle f,\overline{\mathcal{H}}_Tv_h \rangle_{(0,T)} \quad  \forall v_h \in V^h_{0,*}.
\end{cases}
\end{equation}
The isogeometric spaces $V^h_{0,*}$ define a dense discrete sequence in $H^1_{0,*}(0,T)$ (as we will see in Section \ref{approx properties spline}), directed in the real parameter $h$. Hence, we can conclude the following Theorem.

\begin{theorem}\label{cond well-posed iga}
	There exists two constants $C(T,\mu),\overline{h}>0$ such that, if $h\leq \overline{h}$, problem \eqref{iga ode equiv} is well-posed, with the stability estimate
	\begin{equation*}
	|u_h|_{H^1(0,T)} \leq C(T,\mu) \|f\|_{[H^1_{*,0}(0,T)]'} \quad f \in [H^1_{*,0}(0,T)]',
	\end{equation*}
	where $u_h$ is the unique solution of \eqref{iga ode equiv}.
	Moreover, if $h\leq \overline{h}$, a quasi-optimality estimate holds
	\begin{equation*}
	|u-u_h|_{H^1(0,T)} \leq \Big(1+C(T,\mu) \|\mathcal{A}\| \Big) \inf_{v_h \in V^h_{0,*}} |u-v_h|_{H^1(0,T)},
	\end{equation*}
	where $u \in H^1_{0,*}(0,T)$ is the unique solution of \eqref{var ode equiv}, and where $\|\cdot\|$ is the norm of the \textit{solution-to-data} operator $\mathcal{A}:H^1_{0,*}(0,T) \rightarrow [H^1_{0,*}(0,T)]'$, related to the bilinear form of the variational formulation \eqref{var ode equiv}.
\end{theorem}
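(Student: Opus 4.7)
The plan is to recognize that \eqref{iga ode equiv} is exactly the Galerkin-Bubnov approximation \eqref{gal comp} of the Fredholm-type variational problem \eqref{var ode equiv}, so the conclusion should follow by invoking the abstract framework of Section \ref{sec prel ode}. Concretely, the bilinear form $\tilde{a}(u,v) := a(u,\overline{\mathcal{H}}_T v)$ decomposes, using $\partial_t \overline{\mathcal{H}}_T v = -\partial_t v$, as $\tilde{a}(u,v) = b(u,v) + \mu\, d(u,v)$ with
\[
b(u,v) := \langle \partial_t u, \partial_t v\rangle_{L^2(0,T)}, \qquad d(u,v) := \langle u, v(T)-v\rangle_{L^2(0,T)}.
\]
The form $b(\cdot,\cdot)$ is bounded and coercive on $H^1_{0,*}(0,T)$ with continuity and coercivity constants both equal to $1$ w.r.t.\ $|\cdot|_{H^1(0,T)}$. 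The form $d(\cdot,\cdot)$ induces a compact operator $\mathcal{D}:H^1_{0,*}(0,T)\to [H^1_{0,*}(0,T)]'$, as already remarked after the statement of Theorem \ref{zank}: the $-\langle u,v\rangle_{L^2}$ piece is compact by the compact embedding $H^1_{0,*}(0,T) \hookrightarrow L^2(0,T)$, while the $v(T)\langle u,1\rangle_{L^2}$ piece defines a rank-one, hence compact, operator thanks to the continuity of the trace $v\mapsto v(T)$. Finally, the injectivity of $\mathcal{B}+\mu\mathcal{D}$ is given by Theorem \ref{zank}, so hypotheses (1)--(3) of Section \ref{sec prel ode} are all in force.

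Next I would invoke Proposition \ref{prop infsup comp} to produce the threshold $\overline{h}>0$ and a uniform inf-sup constant $\beta>0$ such that \eqref{iga ode equiv} satisfies the discrete inf-sup condition for every $h\leq\overline{h}$. This step rests on the density of $(V^h_{0,*})_h$ in $H^1_{0,*}(0,T)$, which I would justify by appealing to (or anticipating) the spline approximation results of Section \ref{approx properties spline}; a classical quasi-interpolant argument on maximally-regular quadratic splines would also suffice. Well-posedness together with the stability estimate with $C(T,\mu) = 1/\beta$ then comes from Corollary \ref{well-pos comp dis}, and the quasi-optimality bound is an immediate consequence of Corollary \ref{céa comp}, identifying $\|\mathcal{B}+\mu\mathcal{D}\|$ with the operator norm $\|\mathcal{A}\|$ that appears in the theorem statement.

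The main obstacle I anticipate is not the abstract argument itself, which reduces to checking the hypotheses of Section \ref{sec prel ode}, but rather the fact that both $\overline{h}$ and $C(T,\mu)$ remain implicit. Proposition \ref{prop infsup comp} is proved by a compactness/contradiction scheme and therefore yields no quantitative control of these constants, in striking contrast with the explicit $(2+\sqrt{\mu}T)/2$ dependence exhibited by Theorem \ref{zank} at the continuous level. Making the threshold and the stability constant effective --- presumably by means of a G\aa rding-type analysis in the spirit of Theorem \ref{theo gard}, which requires an explicit estimate of the adjoint-approximability quantity $\eta(V^h_{0,*})$ --- would be the real work, and I would expect the remainder of Chapter \ref{ch 3} to be devoted precisely to this quantitative sharpening.
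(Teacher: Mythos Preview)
Your proposal is correct and follows exactly the paper's approach: the paper's proof is a one-line invocation of Proposition \ref{prop infsup comp}, Corollary \ref{well-pos comp dis}, and Corollary \ref{céa comp}, and you have spelled out precisely the verification of hypotheses (1)--(3) (coercivity of $b$, compactness of $\mathcal{D}$, injectivity via Theorem \ref{zank}) and the density input that makes this invocation legitimate. Your anticipation that the remainder of the chapter is devoted to making $\overline{h}$ and $C(T,\mu)$ explicit is also accurate.
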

\begin{proof}
	Theorem \ref{cond well-posed iga} is a straightforward consequence of the results presented in Section \ref{sec prel ode}, i.e., Proposition \ref{prop infsup comp}, Corollary \ref{well-pos comp dis} and Corollary \ref{céa comp}.
\end{proof}

\subsection{Approximation properties of spline spaces with an initial boundary condition}\label{approx properties spline}
So far we have shown that, if the IGA discretization is \textit{sufficiently fine}, problem \eqref{iga ode equiv} is well posed, stability holds and we eventually reach convergence. However, we would like to make explicit the threshold on the mesh-size and (possibly) the stability and quasi-optimality constants. In order to get these results we need a priori error estimate in the Sobolev semi-norm $|\cdot|_{H^1(0,T)}$
for approximation in spline spaces of maximal smoothness on grids defined by arbitrary break points. As pointed out in the paper (\cite{n-width}), classical error estimates for spline approximation are expressed in terms of:
\begin{itemize}
	\item [1.] A power of the mesh-size.
	\item [2.] An appropriate semi-norm of the function to be approximated.
	\item [3.] A constant which is independent of the previous quantities.
\end{itemize}

However, we are interested in estimates with the constant of point 3 made explicit. In this respect, article (\cite{n-width}) is relevant to us, since we slightly modify the construction of this work in order to obtain the estimates we need for test and trial spaces of our interest. In particular, the authors study the approximation properties of spline spaces, without boundary conditions, and of periodic spline spaces. We partially extend their work by including an initial boundary condition.

Let $S_\Xi^p(0,T)$ be the spline space of degree $p \geq 0$ and maximal regularity, where $\Xi$ is the open knot vector that defines the B-spline basis functions of $S_\Xi^p(0,T)$. We firstly observe that we use the terms \textit{knot vector} and \textit{break points} as introduced in Section \ref{sec splines}, unlike (\cite{n-width}) in which the sequence of break points is called knot vector. Let $Q_p^q: H^q(0,T) \rightarrow S_\Xi^p(0,T)$, $q=0,\ldots,p$, be a sequence of bounded linear operators such that 
\begin{gather}
Q^0_p:=P_p \quad \text{is the $L^2(0,T)$ orthogonal projection on} \ S^p_\Xi(0,T) \label{Qp0}, \\
\begin{split}
(Q^q_p&u)(t):=u(0) + \int_0^t(Q_{p-1}^{q-1}\partial_t u)(s) \ ds \label{Qpq},\\
& \quad \ 1 \leq q \leq p, \ \forall u \in H^q(0,T),
\end{split}
\end{gather}
%where $c(u)$ is a constant such that
%\begin{equation}
%\int_{0}^{T} (Q_p^qu)(t)dt = \int_0^T u(t) dt, 
%\end{equation}
%i.e., $c(u)$ is the mean of the function $u(\cdot)-\int_0^{(\cdot)}(Q_{p-1}^{q-1}\partial u)(s) ds $
%on the interval $[0,T]$. 
Firstly, let now observe the reason why the operators $Q^p_q$ maps $H^q(0,T)$ into $S^p_\Xi(0,T)$. Let $K$ be the integral operator such that, for $f \in L^2(0,T)$, 
\begin{equation}\label{K}
Kf(t):=\int_0^t f(s) \ ds.
\end{equation}
Recall from Theorem 17 of (\cite{chapter1}) that the spline space $S_\Xi^p(0,T)$ satisfies 
\begin{equation}\label{der spline}
\partial^q_tS_\Xi^p(0,T)=S_\Xi^{p-q}(0,T), \quad \text{for any} \ q=0,\ldots,p,
\end{equation}
where, by abuse of notation, we denote by the letter $\Xi$ both the open knot vector $\{t_1, \ldots, t_{N+p+1}\}$ (with $0=t_1=\ldots=t_{p+1} <\ldots  <t_{N+1}=\ldots =t_{N+p+1}=T$) and the one obtained from $\{t_1, \ldots, t_{N+p+1}\}$ by reducing the external nodes from $p+1$ to $p-q+1$. 
Thus, as a consequence of \eqref{der spline} and of the Fundamental Theorem of Calculus, there holds
\begin{equation}
S_\Xi^p(0,T)=\mathbb{P}_0 + K\Big(S_\Xi^{p-1}(0,T)\Big), \quad \forall p \geq 1
\end{equation}
where $\mathbb{P}_0$ is the space of constant functions. Hence, by an inductive argument, the range of $Q_p^q$ is a subspace of $S_\Xi^p(0,T)$. The linearity and continuity of $Q^p_q$ are straightforward.

\begin{prop}
	The maps $Q_p^q$ defined in \eqref{Qp0} and \eqref{Qpq} are projection operators with range $S_\Xi^p(0,T)$. They also satisfies
	\begin{equation}\label{commut Q}
	\partial_t Q_p^q = Q^{q-1}_{p-1}\partial_t, \quad \text{for all} \ p \geq 1, \ q=1,\ldots,p.
	\end{equation}
\end{prop}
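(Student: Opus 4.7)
The plan is to handle the commutation formula \eqref{commut Q} first, since it is a direct consequence of the definition, and then to prove that each $Q_p^q$ is a projection onto $S_\Xi^p(0,T)$ by induction on $q$.

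For the commutation, note that by definition \eqref{Qpq}, for any $u \in H^q(0,T)$ with $q \geq 1$ we have $Q_p^q u = u(0) + K(Q_{p-1}^{q-1} \partial_t u)$, where $K$ is the integral operator defined in \eqref{K}. Since $Q_{p-1}^{q-1}\partial_t u \in S_\Xi^{p-1}(0,T) \subset L^2(0,T)$, applying $\partial_t$ and invoking the Fundamental Theorem of Calculus immediately gives $\partial_t Q_p^q u = Q_{p-1}^{q-1} \partial_t u$, which is \eqref{commut Q}.

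For the projection property I would argue by induction on $q$. The base case $q=0$ is immediate: $Q_p^0 = P_p$ is the $L^2(0,T)$-orthogonal projection onto $S_\Xi^p(0,T)$ by \eqref{Qp0}, hence idempotent with range $S_\Xi^p(0,T)$. For the inductive step, assume that for every admissible $p$ the operator $Q_{p-1}^{q-1}$ is a projection with range $S_\Xi^{p-1}(0,T)$. Pick an arbitrary $u \in S_\Xi^p(0,T)$; by \eqref{der spline} we have $\partial_t u \in S_\Xi^{p-1}(0,T)$, so the inductive hypothesis gives $Q_{p-1}^{q-1} \partial_t u = \partial_t u$. Substituting into \eqref{Qpq} yields
\begin{equation*}
(Q_p^q u)(t) = u(0) + \int_0^t \partial_t u(s)\, ds = u(t),
\end{equation*}
so $Q_p^q$ fixes $S_\Xi^p(0,T)$. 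Combined with the inclusion (established just before the statement) that the image of $Q_p^q$ lies in $S_\Xi^p(0,T)$, this shows that $\mathrm{range}(Q_p^q) = S_\Xi^p(0,T)$ and that $(Q_p^q)^2 = Q_p^q$, completing the induction.

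The only mild subtlety is that $\partial_t$ applied to the antiderivative must be interpreted in the Sobolev (a.e.) sense, since splines of degree $p-1$ with maximal regularity are only $C^{p-2}$ across knots; however the FTC holds for $L^2$ integrands, so no difficulty arises. I do not expect any other obstacle: once the recursive definition \eqref{Qpq} is unfolded, both claims reduce to the base case $Q_p^0 = P_p$ plus the stability property \eqref{der spline} of spline spaces under differentiation.
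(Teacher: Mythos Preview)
Your proof is correct and follows essentially the same approach as the paper: both derive the commutation relation directly from the definition and then use induction (the paper on $p$, you on $q$) together with the already-established inclusion $\mathrm{range}(Q_p^q)\subset S_\Xi^p(0,T)$ and the derivative property \eqref{der spline}. Your organization is in fact slightly more economical, since showing $Q_p^q|_{S_\Xi^p(0,T)}=\mathrm{Id}$ immediately yields idempotency, whereas the paper first verifies $(Q_p^q)^2=Q_p^q$ by direct expansion and only afterwards observes that $Q_p^q$ fixes $S_\Xi^p(0,T)$.
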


\begin{proof} 
	Equality \eqref{commut Q} is satisfied by definition. 
	
	If $q=0$, $Q^q_p$ is a projection operator by definition. We use an inductive argument in order to prove that $Q^q_p\big(Q^q_pu \big) = Q^q_p u$ for all $u \in H^q(0,T)$ if $p \geq 1$ and $q=1,\ldots,p$. Let $p=q=1$, then
	\begin{equation*}
	\begin{split}
	Q^1_1\big(Q^1_1u \big)(t)&=Q^1_1\Big( u(0) +\int_0^{(\cdot)}(Q_0^0\partial_t u)(s) \ ds \Big)(t) \\
	&=Q^1_1 \big( u(0) \big) + Q^1_1\Big(\int_0^{(\cdot)}(Q_0^0\partial_t u)(s) \ ds \Big) (t) \quad \text{(linearity of $Q^p_q$)} \\
	&=u(0)+ \int_0^t Q^0_0 \Big(\partial_t \int_0^{(\cdot)}(Q_0^0\partial_t u)(s) \ ds \Big)(\tau) \ d\tau \quad \text{(definition of $Q^p_q$)} \\
	&\overset{\eqref{commut Q}}=u(0)+ \int_0^t Q^0_0 \Big(\partial_t \int_0^{(\cdot)}(\partial_t Q_1^1u)(s) \ ds \Big)(\tau) \ d\tau\\
	&=u(0)+ \int_0^t Q^0_0 \big(\partial_t Q^1_1u(\tau)-\partial_t(Q^1_1u(0)) \big) \ d\tau \quad \text{(Fundamental Th. of Calculus)}\\
	&\overset{\eqref{commut Q}}=u(0)+ \int_0^t Q^0_0 \big(Q^0_0\partial_t u(\tau)\big) \ d\tau \\
	&=u(0)+\int_0^t (Q^0_0 \partial_t u)(\tau) \ d\tau \quad \text{($Q^0_0$ is $L^2(0,T)$-projection)}.
	\end{split}
	\end{equation*}
    Let now assume that the statement is true for $p \geq 1, \ q=1,\ldots,p$, and let now consider $p+1, \ q=1,\ldots,p+1$. Hence, as before, there hold the following identities
	\begin{equation*}
	\begin{split}
	Q^q_{p+1}\big(Q^q_{p+1}u \big)(t)&=Q^q_{p+1}\Big( u(0) +\int_0^{(\cdot)}(Q_p^{q-1}\partial_t u)(s) \ ds \Big)(t) \\
	&=u(0)+ \int_0^t Q_p^{q-1} \Big(\partial_t \int_0^{(\cdot)}(Q_p^{q-1}\partial u)(s) \ ds \Big)(\tau) \ d\tau \\
	&=u(0)+\int_0^t (Q_p^{q-1} \partial_t u)(\tau) \ d\tau.
	\end{split}	
	\end{equation*}
If $q=0$, the range of $Q^q_p$ is clearly $S^p_\Xi(0,T)$. Indeed $Q^0_p$ is the $L^2(0,T)$-projection on $S^p_\Xi(0,T)$, hence, in particular, ${Q^0_p}_{|S^p_\Xi(0,T)} \equiv Id$. If $p \geq 1$ and $q=1,\ldots,p$, the fact that the range of $Q^q_p$ is $S^p_\Xi(0,T)$ is straightforward. Indeed, by using an inductive argument and \eqref{der spline}, one can prove that ${Q^q_p}_{|S^p_\Xi(0,T)} \equiv Id$ also for $p \geq 1$ and $q=1,\ldots,p$.
\end{proof}

Let now recall Theorem $1$ of (\cite{n-width}).

\begin{theorem}[E. Sande, C. Manni, H. Speleers]\label{sande,manni,speleers}
	For any sequence of break points that defines the open knot vector $\Xi$ with maximal regularity, let $h$ denote its maximal knot distance, and let $P_p$ denote the $L^2(0,T)$ orthogonal projection onto the spline space $S^p_\Xi(0,T)$. Then, for any function $u \in H^r(0,T)$ with $r \geq 1$,
	\begin{equation}\label{proj L2}
	\|u-P_pu\|_{L^2(0,T)} \leq \Big ( \frac{h}{\pi} \Big)^r |u|_{H^r(0,T)},
	\end{equation}
	for all $p \geq r-1$.
\end{theorem}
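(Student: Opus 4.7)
The plan is to combine the best-approximation property of $P_p$ with the family of operators $Q^q_p$ introduced above, exploiting the commutation relation $\partial_t Q^q_p = Q^{q-1}_{p-1}\partial_t$ just proved. Since $P_p = Q^0_p$ realises the $L^2(0,T)$-best approximation in $S^p_\Xi(0,T)$, and since the hypothesis $p\geq r-1$ ensures that $Q^{r-1}_p$ is well defined on $H^{r-1}(0,T)\supset H^r(0,T)$, one has
\[
  \|u-P_p u\|_{L^2(0,T)} \;\leq\; \|u - Q^{r-1}_p u\|_{L^2(0,T)}.
\]
The identity $(u-Q^{r-1}_p u)(0)=0$ (immediate from the definition of $Q^{r-1}_p$) places $u-Q^{r-1}_p u$ in $H^1_{0,*}(0,T)$, so the Poincar\'e inequality \eqref{Poinc} together with the commutation yields
\[
  \|u-Q^{r-1}_p u\|_{L^2} \;\leq\; \frac{2T}{\pi}\,\|\partial_t u - Q^{r-2}_{p-1}\partial_t u\|_{L^2}.
\]
Iterating $r-1$ times peels off one derivative and drops the spline degree by one at each step, reducing the whole estimate to a single $L^2$-projection bound for $\partial_t^{r-1} u\in H^1(0,T)$ onto $S^{p-r+1}_\Xi(0,T)$, which is the base case $r=1$.

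First I would establish the base case $r=1$ starting from $p=0$, where the problem is elementary: $P_0$ acts cell-by-cell as the mean, so the sharp Poincar\'e-Wirtinger inequality on each knot interval $[t_i,t_{i+1}]$ with constant $(t_{i+1}-t_i)/\pi\leq h/\pi$ gives the local bound, and summing squares over the cells yields $\|u-P_0u\|_{L^2}\leq (h/\pi)\,|u|_{H^1}$. For $p\geq 1$ the same bound is more delicate, because the piecewise-constant space is \emph{not} contained in $S^p_\Xi$ (max regularity forces $C^{p-1}$ continuity), so the trivial comparison $\|u-P_p u\|_{L^2}\leq \|u-P_0 u\|_{L^2}$ is unavailable; a dedicated argument exploiting the orthogonality of $u-P_p u$ against all of $S^p_\Xi$ is needed.

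The main obstacle is sharpening the \emph{global} Poincar\'e constant $2T/\pi$ that appears in each antidifferentiation step to the \emph{local} constant $h/\pi$. As it stands, the naive induction above delivers only the suboptimal estimate $\|u-P_p u\|_{L^2}\leq (2T/\pi)^{r-1}(h/\pi)\,|u|_{H^r}$, which is far from the claimed $(h/\pi)^r$. Closing this gap is the heart of the argument in (\cite{n-width}): one restricts the Volterra operator $K$ from \eqref{K} to $(S^{p-q}_\Xi)^\perp\subset L^2(0,T)$ at each inductive level $q$ and, via a spectral/duality analysis that uses the orthogonality of the residual against the \emph{entire} spline space (not merely against the image of $Q^{q-1}_{p-1}$), proves that this restricted $K$ has operator norm at most $h/\pi$. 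Substituting this sharpened local bound at every level of the induction then produces the claimed $(h/\pi)^r$, and the whole result for $P_p$ follows by the opening best-approximation inequality.
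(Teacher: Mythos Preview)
The paper does not prove this theorem: it is introduced with ``Let now recall Theorem~1 of (\cite{n-width})'' and stated without argument, as an external result of Sande, Manni and Speleers. There is therefore no ``paper's own proof'' to compare against beyond the citation.

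Your proposal goes further than the paper does, and your outline is honest about where the real work lies. The reduction via $\|u-P_pu\|_{L^2}\le\|u-Q^{r-1}_p u\|_{L^2}$ and the commutation $\partial_t Q^q_p=Q^{q-1}_{p-1}\partial_t$ are correct, and you rightly flag that the na\"ive iteration with the global Poincar\'e constant $2T/\pi$ only yields the useless bound $(2T/\pi)^{r-1}(h/\pi)|u|_{H^r}$. You also correctly identify that the base case $r=1$, $p\ge1$ is nontrivial because $S^0_\Xi\not\subset S^p_\Xi$ under maximal regularity, and that the heart of the matter is the spectral estimate $\|K\|_{(S^{p}_\Xi)^\perp\to L^2}\le h/\pi$ from (\cite{n-width}). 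Since you ultimately defer this central estimate to the same reference the thesis cites, your ``proof'' and the paper's treatment coincide in substance: both rely on (\cite{n-width}) for the actual argument.
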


\begin{oss}
	Let $P$ be the $L^2(0,T)$ orthogonal projection onto a finite dimensional subspace $\mathcal{X}$ of $L^2(0,T)$. For $A \subseteq L^2(0,T)$ we define
	\begin{equation*}
	E(A,\mathcal{X})=\sup_{u \in A}{\|u-Pu\|_{L^2(0,T)}},
	\end{equation*}
	i.e., $E(A,\mathcal{X})$ is the ``maximal of the minimal distances'' between $A$ and the projection space $\mathcal{X}$.
	The \textit{Kolmogorov $L^2$ n-width} of $A$ is defined as 
	\begin{equation*}
	d_n(A)=\inf_{\mathcal{X} \subset L^2(0,T), dim\mathcal{X}=n} E(A,\mathcal{X}).
	\end{equation*}
	The projection space $\mathcal{X}$ is called an optimal subspace for $d_n(A)$ if $d_n(A)=E(A,\mathcal{X})$.
	
Let $r \geq 1$ and let $A=\{u \in H^r(0,T): \|\partial^r_t{u}\|_{L^2(0,T)} \leq 1 \}$. 
	Clearly, for any finite subspace $\mathcal{X}$ of $L^2(0,T)$ the following estimate holds
	\begin{equation*}
	\|u-Pu\|_{L^2(0,T)} \leq E(A,\mathcal{X})\|\partial^r_t{u}\|_{L^2(0,T)} \quad \forall u \in H^r(0,T),
	\end{equation*}
	and, in (\cite{n-width}), an error estimate of the form
	\begin{equation}\label{proj}
	\|u-Pu\|_{L^2(0,T)} \leq C\|\partial^r_t{u}\|_{L^2(0,T)} \quad \forall u \in H^r(0,T)
	\end{equation}
	is said to be \textit{sharp} if 
	\begin{equation*}
	C=E(A,\mathcal{X}).
	\end{equation*}
Also, in (\cite{n-width}), a projection error estimate of the form \eqref{proj} is said to be \textit{optimal} if the subspace we project onto is optimal for the Kolmogorov $L^2$ n-width of $A$ and the projection error estimate is sharp.

In (\cite{n-width}) the following results are proven.
\begin{itemize}
	\item If $r=1$ and $p=0$ and the sequence of break points that defines $\Xi$ is uniform, then the estimate \eqref{proj L2} is optimal.
	\item If $r=1$ and $p>0$ and the sequence of break points that defines $\Xi$ is uniform, then the estimate \eqref{proj L2} is asymptotically $\big($with respect to the dimension of the spline space $S^p_\Xi(0,T)$ $\big)$ optimal.
\end{itemize}
In general, the authors conjecture that:
\begin{center}
 for all degree $p \geq 0$ there exists a sequence of break points such that for at least an $r=1,\ldots,p+1$, the estimate \eqref{proj L2} is optimal.
 \end{center}
\end{oss}

As a consequence of Theorem \ref{sande,manni,speleers} there holds the following result, partially extending Theorem 3 of (\cite{n-width}).

\begin{theorem}
	Let $u \in H^r(0,T)$ for $r \geq 2$. For any $q=1,\ldots,r-1$ and any sequence of break points that defines the open knot vector $\Xi$ with maximal regularity, let $h$ denote its maximal knot distance, and let $Q^q_p$ be the projection onto $S^p_\Xi(0,T)$ defined in \eqref{Qpq}. Then,
	\begin{equation}\label{err spline}
	\big |u-Q^q_pu \big |_{H^q(0,T)}\leq \Big(\frac{h}{\pi}\Big)^{r-q}\big |u \big |_{H^r(0,T)},
	\end{equation}
	for all $p \geq r-1$.
\end{theorem}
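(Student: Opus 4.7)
The plan is to reduce the $H^q$-seminorm bound to the $L^2$-projection estimate of Theorem \ref{sande,manni,speleers}, using the commutation identity \eqref{commut Q} to trade a derivative of $Q_p^q u$ for the operator $Q_{p-1}^{q-1}$ acting on $\partial_t u$.

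First I would observe that by definition of the seminorm,
\begin{equation*}
\big|u - Q_p^q u\big|_{H^q(0,T)} = \big\|\partial_t^q(u - Q_p^q u)\big\|_{L^2(0,T)}.
\end{equation*}
Next I would iterate the commutation property \eqref{commut Q} exactly $q$ times:
\begin{equation*}
\partial_t^q Q_p^q u = \partial_t^{q-1} Q_{p-1}^{q-1} \partial_t u = \partial_t^{q-2} Q_{p-2}^{q-2} \partial_t^2 u = \cdots = Q_{p-q}^{0}\, \partial_t^q u = P_{p-q}\, \partial_t^q u,
\end{equation*}
where the last equality uses the definition \eqref{Qp0} of $Q_{p-q}^0$ as the $L^2(0,T)$-orthogonal projection onto $S_\Xi^{p-q}(0,T)$. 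Combining the two displays,
\begin{equation*}
\big|u - Q_p^q u\big|_{H^q(0,T)} = \big\|\partial_t^q u - P_{p-q}\, \partial_t^q u \big\|_{L^2(0,T)}.
\end{equation*}

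At this point I would apply Theorem \ref{sande,manni,speleers} to the function $\partial_t^q u \in H^{r-q}(0,T)$, with regularity index $r-q \geq 1$ (which is guaranteed by $q \leq r-1$) and with projection onto $S_\Xi^{p-q}(0,T)$. The hypothesis $p \geq r-1$ translates into $p-q \geq (r-q)-1$, which is exactly the admissibility condition required by Theorem \ref{sande,manni,speleers}. Applying that estimate gives
\begin{equation*}
\big\|\partial_t^q u - P_{p-q}\, \partial_t^q u\big\|_{L^2(0,T)} \leq \Bigl(\frac{h}{\pi}\Bigr)^{r-q} \big|\partial_t^q u\big|_{H^{r-q}(0,T)} = \Bigl(\frac{h}{\pi}\Bigr)^{r-q} |u|_{H^r(0,T)},
\end{equation*}
which is the desired bound.

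The only delicate point, and what I expect to be the main bookkeeping obstacle, is to verify that the open knot vector whose external multiplicity has been reduced (as in the discussion following \eqref{der spline}) really yields the space $S_\Xi^{p-q}(0,T)$ onto which $P_{p-q}$ projects, with the same maximal knot distance $h$; beyond this, the argument is just a clean composition of the commutation identity with the optimal $L^2$ projection estimate.
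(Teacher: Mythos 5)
Your proposal is correct and follows essentially the same route as the paper's proof: both reduce the $H^q$-seminorm error to the $L^2$-projection error of $\partial_t^q u$ onto $S^{p-q}_\Xi(0,T)$ via the iterated commutation identity $\partial_t^q Q_p^q = P_{p-q}\partial_t^q$, and then invoke Theorem \ref{sande,manni,speleers} under the condition $p-q \geq (r-q)-1$. The paper phrases the first step through the decomposition $u = g + K^q v$ with $v = \partial_t^q u$, but this is only a cosmetic difference, and your ``delicate point'' about the break points (and hence $h$) being unchanged when the external knot multiplicities are reduced is exactly what the paper's abuse-of-notation remark after \eqref{der spline} takes care of.
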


\begin{proof}
Firstly, as a consequence of the Fundamental Theorem of Calculus for absolutely continuous functions, observe that the space $H^r(0,T)$, with $r \geq 1$, satisfies
\begin{equation}\label{Hr}
H^r(0,T)=\mathbb{P}_0+K\big(H^{r-1}(0,T)\big)=\ldots=\mathbb{P}_{r-1}+K^r\big(H^0(0,T)\big),
\end{equation}
where $\mathbb{P}_{r-1}$ is the space of polynomials of degree at most $r-1$ and $K$ is the integral operator defined in \eqref{K}.

From \eqref{Hr} we know that $u \in H^r(0,T)$ can be written as $u=g+K^qv$, for $g \in \mathbb{P}_{q-1} \subset S^{p}_\Xi$, with $q \geq 1$, and $v \in H^{r-q}(0,T)$. Since the projection operator $Q^q_p: H^q(0,T) \rightarrow S^p_\Xi(0,T)$ is surjective and $\partial^q Q^q_p = Q^{q-q}_{p-q}\partial^q=P_{p-q}\partial^q$, as a consequence of Theorem \ref{sande,manni,speleers} there hold the following relations
\begin{equation*}
\begin{split}
\big\|\partial^q\big(u-Q^q_pu \big) \big \|_{L^2(0,T)}&=\big\| v-P_{p-q}v \big \|_{L^2(0,T)} \\
&\overset{(\text{if} \ p-q \geq r-q-1 \geq 0)}\leq \Big(\frac{h}{\pi}\Big)^{r-q} \big \| \partial^{r-q}v \big \|_{L^2(0,T)}=\big \|\partial^r u \big \|_{L^2(0,T)}, 
\end{split}
\end{equation*}
$q \leq r-1$ and $p \geq r-1$. Since $q = 1, \ldots, r-1$, the last inequality holds for $r \geq 2$.
\end{proof}

\begin{oss}
	The density of the family of spline spaces $(V^h_{0,*})_h$ in $H^1_{0,*}(0,T)$ is a consequence of the result \eqref{err spline} with $p=r=2$, $q=1$, and of the density of $C^\infty_c(0,T]$ in $H^1_{0,*}(0,T)$ observed in Section \ref{sec sobolev}. 
	\begin{proof}
		Let $u \in H^1_{0,*}(0,T)$ and let $\epsilon >0$ be fixed. As a consequence of the density of $C^\infty_c(0,T]$ in $H^1_{0,*}(0,T)$, there exists $\phi \in C^\infty_c(0,T]$ such that\\ $|u-\phi|_{H^1(0,T)} \leq \frac{\epsilon}{2}$. As a consequence of \eqref{err spline}, there exists $\overline{h}>0$ such that \\ $\big |\phi-Q^1_2\phi \big |_{H^1(0,T)} \leq \frac{\epsilon}{2}$ for all $h \leq \overline{h}$. We then obtain:
		\begin{equation*}
		\begin{split}
		\inf_{v_h \in V^h_{0,*}}|u-v_h|_{H^1(0,T)} &\leq |u-Q^1_2\phi \big |_{H^1(0,T)} \\
		&\leq |u-\phi|_{H^1(0,T)} + |\phi-Q^1_2\phi \big |_{H^1(0,T)} \leq \epsilon \quad \forall h \leq \overline{h}.
		\end{split}
		\end{equation*}
	\end{proof}
\end{oss}

Let us recall that we have modified the projection operator of (\cite{n-width}) in order to get a projection operator whose restriction to $H^1_{0,*}(0,T) \cap H^q(0,T)$ ($q \geq 1$) assumes values in $V^h_{0,*}$ defined in \eqref{iga space}.

\subsection{Bound on the mesh-size, stability and quasi-optimality constants}
In this Section we give two results of conditioned stability \textit{with an explicit bound on the mesh-size} and we also make explicit the stability and quasi-optimality constants.
\subsubsection{Extension to quadratic IGA with maximal regularity of conditioned stability for piecewise continuous linear FEM}
A first result is an extension to the IGA discretization of Theorem 4.7 of (\cite{Coercive}).

\begin{theorem}\label{teo stab IGA zank}
	Let 
	\begin{equation}\label{h bound}
	h \leq \frac{\pi^2}{\sqrt{2}(2+\sqrt{\mu}T)\mu T}
	\end{equation}
	be satisfied. Then, the bilinear form $a(\cdot,\cdot)$ as defined in \eqref{bil ode} satisfies the inf-sup condition
	\begin{equation}\label{infsup zank}
	\frac{2 \pi^2}{(2+\sqrt{\mu}T)^2(\pi^2+4\mu T^2)} |u_h|_{H^1(0,T)} \leq \sup_{0 \neq v_h \in V^h_{0,*}} \frac{a(u_h,\overline{\mathcal{H}}_T v_h)}{|v_h|_{H^1(0,T)}},
	\end{equation}
	for all $u_h \in V^h_{0,*}$.
\end{theorem}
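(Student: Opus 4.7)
The strategy is to transfer the continuous inf-sup from Theorem \ref{zank} to the discrete level by projecting an optimal continuous dual test function via the spline projector $Q_2^1$ from Section \ref{approx properties spline}, and to exploit the commutativity $\partial_t Q_2^1 = P_1 \partial_t$ to remove the kinetic part of the error. Given $u_h \in V^h_{0,*}$, I invoke the continuous stability (equivalent to the adjoint being an isomorphism with the same constant, as in the Remark following Theorem \ref{BNB}) to pick $w \in H^1_{0,*}(0,T)$ solving
\begin{equation*}
a(z, \overline{\mathcal{H}}_T w) = \langle \partial_t u_h, \partial_t z\rangle_{L^2(0,T)} \quad \forall z \in H^1_{0,*}(0,T).
\end{equation*}
Testing with $z = u_h$ yields $a(u_h, \overline{\mathcal{H}}_T w) = |u_h|^2_{H^1(0,T)}$, and the continuous stability gives $|w|_{H^1(0,T)} \leq \frac{2+\sqrt{\mu}T}{2}|u_h|_{H^1(0,T)}$.

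Next, I extract $H^2$-regularity for the \emph{difference} $w - u_h$ (not for $w$ itself) in order to avoid a $\|\partial_t^2 u_h\|_{L^2}$-term that would behave as $1/h$ after an inverse inequality. Integration by parts in the adjoint variational equation (using $z(0)=0$) produces the strong form
\begin{equation*}
\partial_t^2(w - u_h) = -\mu(w - w(T)) \quad \text{in } L^2(0,T),
\end{equation*}
and since $w - w(T) \in H^1_{*,0}(0,T)$ the Poincaré inequality \eqref{Poinc} gives
\begin{equation*}
|w - u_h|_{H^2(0,T)} = \mu\|w - w(T)\|_{L^2(0,T)} \leq \frac{\mu T(2+\sqrt{\mu}T)}{\pi}\,|u_h|_{H^1(0,T)}.
\end{equation*}

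As discrete test function I take $v_h := Q_2^1 w \in V^h_{0,*}$. Because $Q_2^1$ reproduces $u_h$, estimate \eqref{err spline} applied to $w - u_h$ yields $|Q_2^1 w - w|_{H^1(0,T)} \leq (h/\pi)\,|w - u_h|_{H^2(0,T)}$. The commutativity $\partial_t Q_2^1 = P_1 \partial_t$ together with $\partial_t u_h \in S^1_h$ gives $\langle \partial_t u_h,\, \partial_t(Q_2^1 w - w)\rangle_{L^2} = 0$, while the mean-preservation of $P_1$ (constants lie in $S^1_h$) implies $(Q_2^1 w - w)(T) = 0$; hence $Q_2^1 w - w \in H^1_0(0,T)$ and $\overline{\mathcal{H}}_T(Q_2^1 w - w) = -(Q_2^1 w - w)$. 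Assembling,
\begin{equation*}
a(u_h,\, \overline{\mathcal{H}}_T Q_2^1 w) = |u_h|^2_{H^1(0,T)} \;-\; \mu\, \langle u_h,\, Q_2^1 w - w\rangle_{L^2(0,T)}.
\end{equation*}

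The perturbation is then bounded by Cauchy--Schwarz together with the Poincaré constant $2T/\pi$ in $H^1_{0,*}$ applied to $u_h$ and the sharper constant $T/\pi$ in $H^1_0$ applied to $Q_2^1 w - w$, combined with the $H^2$-bound above; under a sufficiently small mesh-size the remainder is strictly smaller than $|u_h|^2_{H^1(0,T)}$, and dividing by $|Q_2^1 w|_{H^1(0,T)} \leq |w|_{H^1(0,T)} + |Q_2^1 w - w|_{H^1(0,T)}$ produces the discrete inf-sup. The main obstacle I foresee is matching \emph{precisely} the constants in the statement: the factor $(\pi^2+4\mu T^2)/\pi^2$ in the denominator of \eqref{infsup zank} coincides with the continuity constant from \eqref{cont of a}, suggesting that the remainder should in fact be estimated via continuity rather than by the sharper commutativity-based bound, while the $\sqrt{2}$ in \eqref{h bound} points to a Young-type splitting in the error estimate; tuning these two ingredients so that both the inf-sup constant and the threshold on $h$ come out with exactly the stated coefficients will be the delicate technical part of the proof.
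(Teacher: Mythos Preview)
Your argument is correct and proves a discrete inf-sup, but by a route genuinely different from the paper's, and it will \emph{not} reproduce the stated constants --- and this is not merely a tuning issue.

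The paper chooses the auxiliary $w$ as the solution of $\partial_{tt}w=\mu u_h$ (with $w(0)=0$, $\partial_t w(T)=0$), so that $a(u_h,\overline{\mathcal{H}}_T(u_h-w))=|u_h-w|^2_{H^1}$; it then takes the \emph{Galerkin} approximation $w_h\in V^h_{0,*}$ of $w$ and uses $u_h-w_h$ as the discrete test function. The crucial step is an Aubin--Nitsche duality: a further auxiliary problem (for a function $\psi$) is introduced to bound $\|\overline{\mathcal{H}}_T(w-w_h)\|_{L^2}$, which, combined with Galerkin orthogonality for $w-w_h$, yields a remainder of order $h^2$, namely $\big(\tfrac{2T\mu h}{\pi^2}\big)^2|u_h|^2_{H^1}$. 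Balancing this $O(h^2)$ term against the main term $\tfrac{4}{(2+\sqrt{\mu}T)^2}$ is exactly what produces the threshold \eqref{h bound}, and the factor $(\pi^2+4\mu T^2)$ in \eqref{infsup zank} comes from the separate estimate $|u_h-w_h|_{H^1}\le(1+\tfrac{4\mu T^2}{\pi^2})|u_h|_{H^1}$, not from the continuity constant of $a$.

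Your route --- adjoint $w$, then the spline projector $Q_2^1w$, with the commutativity $\partial_tQ_2^1=P_1\partial_t$ killing the kinetic part --- is more economical and in fact yields a \emph{larger} inf-sup constant, of order $\mu^{-1/2}$ rather than the paper's $\mu^{-2}$. However your remainder $\mu\langle u_h,\,Q_2^1w-w\rangle$ is only $O(h)$: even exploiting $Q_2^1w-w\in H^1_0(0,T)$ with the sharp constant $T/\pi$, you obtain at best
\[
\big|\mu\langle u_h,\,Q_2^1w-w\rangle\big|\;\le\;\frac{2\mu^2T^3(2+\sqrt{\mu}T)}{\pi^4}\,h\,|u_h|^2_{H^1},
\]
which forces a threshold $h\lesssim\mu^{-5/2}$ instead of the stated $\mu^{-3/2}$. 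This gap is structural: $Q_2^1$ is not the $L^2$-projection, so there is no direct $O(h^2)$ bound on $\|Q_2^1w-w\|_{L^2}$, and recovering one would require precisely the kind of duality step the paper performs. Replacing your commutativity bound by the continuity bound, as you suggest, only makes the remainder larger; it cannot convert $O(h)$ into $O(h^2)$. If you are content with a sharper inf-sup constant at the price of a more restrictive mesh condition, your proof stands; if you want exactly \eqref{h bound} and \eqref{infsup zank}, you need the Aubin--Nitsche argument.
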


\begin{proof}
	Let $u_h \in V^h_{0,*}$. As a consequence of Lax-Milgram Lemma \ref{lax-milgram}, let $w \in H^1_{0,*}(0,T)$ be the unique solution of the variational problem
	\begin{equation}\label{1 var}
	-\int_0^T \partial_t w(t) \partial_t(\overline{\mathcal{H}}_T v)(t) \ dt = -\mu \int_0^T u_h(t)(\overline{\mathcal{H}}_T v)(t) \ dt  \quad \forall v \in H^1_{0,*}(0,T).
	\end{equation}
	Hence, recalling that $(\overline{\mathcal{H}}_T v)(t)=v(T)-v(t)$, 
	\begin{equation}\label{auh-w}
	\begin{split}
		a(u_h,\overline{\mathcal{H}}_T(u_h-w))&=-\int_0^T \partial_t u_h(t) \partial_t[(\overline{\mathcal{H}}_T u_h)(t)-(\overline{\mathcal{H}}_T w)(t)] \ dt \\
		 & \quad + \mu \int_0^T u_h(t) [(\overline{\mathcal{H}}_T u_h)(t)-(\overline{\mathcal{H}}_T w)(t)] \ dt\\
	&\overset{\eqref{1 var}}= \int_0^T \partial_t u_h(t) \partial_t[u_h(t)- w(t)] \ dt - \int_0^T \partial_t w(t) [\partial_t u_h(t)-\partial_t w(t)] \ dt\\
	&= \int_0^T [\partial_t u_h(t)-\partial_t w(t)]^2 \ dt=|u_h-w|_{H^1(0,T)}^2.
	\end{split}
	\end{equation}
	Also, thanks to Lax-Milgram Lemma \ref{lax-milgram}, let $z \in H^1_{0,*}(0,T)$ be the unique solution of the following variational problem
	\begin{equation}\label{2 var}
	\begin{split}
	-\int_0^T \partial_t z(t) \partial_t (\overline{\mathcal{H}}_T v)(t) \ dt = -\int_0^T &\partial_t u_h(t) \partial_t(\overline{\mathcal{H}}_T v)(t) \ dt \\
	& + \mu \int_0^T u_h(t)(\overline{\mathcal{H}}_T v)(t)\ dt \quad \forall v \in H^1_{0,*}(0,T)
	\end{split}
	\end{equation}
	Since $w \in H^1_{0,*}(0,T)$ is the solution of \eqref{1 var}, problem \eqref{2 var} is equivalent to
	\begin{equation*}
	-\int_0^T \partial_t [z(t)-(u_h(t)-w(t))] \partial_t(\overline{\mathcal{H}}_T v)(t) \ dt =0 \quad \forall v \in H^1_{0,*}(0,T),
	\end{equation*}
	from which we conclude, by choosing $v=z-(u_h-w) \in H^1_{0,*}(0,T)$, that
	$z(t)=u_h(t)-w(t)$ for all $t \in [0,T]$, since $u_h(0)=w(0)=z(0)=0$. Therefore, from \eqref{auh-w}, we conclude that
	\begin{equation}\label{a=z}
	a(u_h,\overline{\mathcal{H}}_T(u_h-w))=|z|^2_{H^1(0,T)}.
	\end{equation}
	On the other hand, the variational formulation \eqref{2 var} gives
	\begin{equation*}
	\begin{split}
	|z|_{H^1(0,T)} &\overset{\eqref{2 var}}= \frac{a(u_h,\overline{\mathcal{H}}_T z)}{|z|_{H^1(0,T)} } \leq \sup_{0 \neq v \in H^1_{0,*}(0,T)} \frac{a(u_h,\overline{\mathcal{H}}_T v)}{|v|_{H^1(0,T)}} \\
	&\overset{\eqref{2 var}} = \sup_{0 \neq v \in H^1_{0,*}(0,T)} \frac{ \langle \partial_tz,\partial_tv \rangle _{L^2(0,T)}}{|v|_{H^1(0,T)}} \overset{(C-S)}\leq |z|_{H^1(0,T)}.
	\end{split}
	\end{equation*}
	Hence, by using \eqref{stab ode}, there hold the following
	\begin{equation*}
	|z|_{H^1(0,T)}= \sup_{0 \neq v \in H^1_{0,*}(0,T)} \frac{a(u_h,\overline{\mathcal{H}}_T v)}{|v|_{H^1(0,T)}} \overset{\eqref{stab ode}}\geq \frac{2}{2+\sqrt{\mu}T} |u_h|_{H^1(0,T)},
	\end{equation*}
	from which, recalling \eqref{a=z}, we conclude
	\begin{equation}\label{magg a}
	a(u_h,\overline{\mathcal{H}}_T(u_h-w)) \overset{\eqref{a=z}}= |z|^2_{H^1(0,T)} \geq \frac{4}{(2+\sqrt{\mu}T)^2}|u_h|^2_{H^1(0,T)}.
	\end{equation}
	We now discretize the first variational formulation \eqref{1 var}. Let $w_h \in V^h_{0,*}$ be the unique solution of the following variational problem
	\begin{equation}\label{disc dim}
	\int_0^T \partial_t w_h(t) \partial_t v_h(t) dt = -\mu \int_0^T u_h(t) (\overline{\mathcal{H}}_T v_h)(t) \ dt \quad \forall v_h \in V^h_{0,*}.
	\end{equation}
	By using Céa's Lemma \ref{céa} and the error estimate \eqref{err spline} with $r=2$, $p=2$, $q=1$, there hold the following relations
	\begin{equation}\label{w-wh}
	\begin{split}
	|w-&w_h|_{H^1(0,T)}\leq \inf_{0 \neq v_h \in V^h_{0,*}}|w-v_h|_{H^1(0,T)} \quad \text{(Céa)}\\
	& \leq |w-Q^1_2 w|_{H^1(0,T)} \overset{\eqref{err spline}}\leq \frac{h}{\pi}\|\partial_{tt}w\|_{L^2(0,T)} \overset{\eqref{disc dim}}= \mu \frac{h}{\pi}\|u_h\|_{L^2(0,T)}.
	\end{split}
	\end{equation}
	Furthermore, Galerkin orthogonality \eqref{gal ort} is satisfied
	\begin{equation}\label{gal ort dim}
	\int_0^T[\partial_t w(t)-\partial_t w_h(t)] \partial_t v_h(t) \ dt =0 \quad \forall v_h \in V^h_{0,*}.
	\end{equation}
	As a consequence of \eqref{gal ort dim}, we have
	\begin{equation}\label{magg uh}
	\begin{split}
	a(u_h,\overline{\mathcal{H}}_T(w-w_h))&=\int_0^T \partial_t u_h(t)\partial_t[w(t)-w_h(t)] \ dt + \mu \int_0^T u_h(t)[\overline{\mathcal{H}}_T(w-w_h)](t) \ dt \\
	&\overset{\eqref{gal ort dim}}=\mu \int_0^T u_h(t)[\overline{\mathcal{H}}_T(w-w_h)](t) \ dt \\  
	& \leq \mu \|u_h\|_{L^2(0,T)} \|\overline{\mathcal{H}}_T(w-w_h)\|_{L^2(0,T)}.
	\end{split}
	\end{equation}
	As a consequence of Lax-Milgram Lemma \ref{lax-milgram}, let now $\psi \in H^1_{0,*}(0,T)$ be the unique solution of the following variational problem
	\begin{equation}\label{3 var}
	- \int_0^T \partial_t \psi(t) \partial_t [\overline{\mathcal{H}}_Tv](t) \ dt = \int_0^T(\overline{\mathcal{H}}_T(w-w_h))(t)(\overline{\mathcal{H}}_Tv)(t) \ dt \quad \forall v \in H^1_{0,*}(0,T).
	\end{equation}
	In particular, by choosing $v=w-w_h \in H^1_{0,*}(0,T)$ and recalling \eqref{gal ort dim}, we obtain
	\begin{equation*}
	\begin{split}
	\|\overline{\mathcal{H}}_T(w-w_h)\|^2_{L^2(0,T)}&=\int_0^T[\overline{\mathcal{H}}_T(w-w_h)](t)[\overline{\mathcal{H}}_T(w-w_h)](t) \ dt \\
	&\overset{\eqref{3 var}}=-\int_0^T\partial_t \psi(t)\partial_t [\overline{\mathcal{H}}_T(w-w_h)](t) \ dt \\
	&=\int_0^T\partial_t \psi(t)[\partial_t w(t)-\partial_t w_h(t)] \ dt \\
	&\overset{\eqref{gal ort dim}}=\int_0^T \partial_t[\psi(t)-Q^1_2 \psi(t)][\partial_t w(t)-\partial_t w_h(t)] \ dt\\
	& \leq |\psi-Q^1_2 \psi|_{H^1(0,T)}|w-w_h|_{H^1(0,T)}\\
	&\overset{\eqref{err spline},\eqref{w-wh}}\leq \Big(\frac{h}{\pi} \Big)^2\mu\|\partial_{tt} \psi \|_{L^2(0,T)}\|u_h\|_{L^2(0,T)}\\
	&\overset{\eqref{3 var}}= \Big(\frac{h}{\pi} \Big)^2\mu \|\overline{\mathcal{H}}_T(w-w_h)\|_{L^2(0,T)} \|u_h\|_{L^2(0,T)},
	\end{split}
	\end{equation*}
	i.e.,
	\begin{equation*}
	\|\overline{\mathcal{H}}_T(w-w_h)\|_{L^2(0,T)}\leq \frac{h^2}{\pi^2}\mu\|u_h\|_{L^2(0,T)}.
	\end{equation*}
	Therefore, by using \eqref{magg uh} and Poincaré inequality \eqref{Poinc},
	\begin{equation}\label{stim}
     a(u_h,\overline{\mathcal{H}}_T(w-w_h)) \overset{\eqref{magg uh}}\leq \Big(\frac{h}{\pi} \mu \Big)^2  \|u_h\|^2_{L^2(0,T)} \overset{\eqref{Poinc}}\leq \Big(\frac{2 T}{\pi^2} \mu h \Big)^2|u_h|_{H^1(0,T)}^2
  	\end{equation}
  	follows. Hence, as a consequence of \eqref{magg a} and \eqref{stim} we conclude
  	\begin{equation*}
  	\begin{split}
  	a(u_h,\overline{\mathcal{H}}_T(u_h-w_h))&=a(u_h,\overline{\mathcal{H}}_T(u_h-w))+a(u_h,\overline{\mathcal{H}}_T(w-w_h))\\
  	& \overset{\eqref{magg a},\eqref{stim}}\geq \Bigg[ \frac{4}{(2+\sqrt{\mu}T)^2} - \Big(\frac{2 T}{\pi^2} \mu h \Big)^2 \Bigg] |u_h|^2_{H^1(0,T)}\\
  	& \geq \frac{2}{(2+\sqrt{\mu}T)^2} |u_h|^2_{H^1(0,T)},
  	\end{split}
  	\end{equation*}
  	if
  	\begin{equation*}
  	\Big(\frac{2 T}{\pi^2} \mu h \Big)^2 \leq \frac{2}{(2+\sqrt{\mu}T)^2}
  	\end{equation*}
  	is satisfied, i.e.,
  	\begin{equation*}
  	h \leq \frac{\pi^2}{\sqrt{2}(2+\sqrt{\mu}T)\mu T}.
  	\end{equation*}
  	We now consider a lower bound for $|u_h|_{H^1(0,T)}$ dependent of $|u_h-w_h|_{H^1(0,T)}$. Indeed, we have
  	\begin{equation*}
  	\|\partial_t(u_h-w_h)\|_{L^2(0,T)}\leq \|\partial_t u_h\|_{L^2(0,T)}+\|\partial_t w_h\|_{L^2(0,T)},
  	\end{equation*}
  	and, thanks to \eqref{disc dim} and Poincaré inequality \eqref{Poinc},
  	\begin{equation*}
  	\begin{split}
  	\|\partial_t w_h\|^2_{L^2(0,T)}&=-\int_0^T \partial_t w_h(t) \partial_t (\overline{\mathcal{H}}_T w_h)(t) \ dt \overset{\eqref{disc dim}}= -\mu \int_0^T u_h(t)(\overline{\mathcal{H}}_T w_h)(t) \ dt \\
  	&  \leq \mu \|u_h\|_{L^2(0,T)} \|\overline{\mathcal{H}}_T w_h \|_{L^2(0,T)} \overset{\eqref{Poinc}} \leq \frac{4 T^2}{\pi^2}\mu |u_h|_{H^1(0,T)}|w_h|_{H^1(0,T)},
  	\end{split}
  	\end{equation*}
  	i.e.,
  	\begin{equation*}
  	|u_h-w_h|_{H^1(0,T)} \leq \Big(1+\frac{4 T^2}{\pi^2}\mu \Big) |u_h|_{H^1(0,T)}.
  	\end{equation*}
  	Therefore, we obtain the following inequality
  	\begin{equation*}
  	\frac{2 \pi^2}{(2+\sqrt{\mu}T)^2(\pi^2+4\mu T^2)} |u_h|_{H^1(0,T)} \leq \frac{a(u_h,\overline{\mathcal{H}}_T(u_h-w_h))}{|u_h-w_h|_{H^1(0,T)}}.
  	\end{equation*}
\end{proof}

Hence, we obtain the following result.

\begin{theorem}
	Let \eqref{h bound} be satisfied. Then, problem \eqref{iga ode equiv} is well-posed with the stability estimate
	\begin{equation}\label{stab zank}
	|u_h|_{H^1(0,T)} \leq \frac{(2+\sqrt{\mu}T)^2(\pi^2+4\mu T^2)}{2\pi^2}\|f\|_{[H^1_{*,0}(0,T)]'},
	\end{equation}
	where $f \in [H^1_{*,0}(0,T)]'$ and $u_h$ is the unique solution of \eqref{iga ode equiv}. Moreover, a quasi-optimality estimate holds
	\begin{equation}\label{quasi opt zank}
	|u-u_h|_{H^1(0,T)} \leq \Bigg[ 1 + \frac{(2+\sqrt{\mu} T)^2(\pi^2+4 \mu T^2)^2}{2 \pi^4} \Bigg] \inf_{v_h \in V^h_{0,*}} |u-v_h|_{H^1(0,T)},
	\end{equation}
	where $u$ is the unique solution of \eqref{var ode equiv}.
\end{theorem}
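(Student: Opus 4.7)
The plan is to combine the discrete inf-sup condition established in Theorem~\ref{teo stab IGA zank} with the general BNB-type framework already recorded in this chapter; concretely, Remark~\ref{remark discrete system} (item 6) handles well-posedness, the inf-sup inequality directly yields the stability bound, and Proposition~\ref{err BNB} produces the quasi-optimality estimate. No new analytical ingredient is needed beyond a bookkeeping of the continuity and inf-sup constants and the use of the fact that $\overline{\mathcal{H}}_T$ is an isometry with respect to $|\cdot|_{H^1(0,T)}$.

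First, under \eqref{h bound}, Theorem~\ref{teo stab IGA zank} provides the discrete inf-sup condition on $V^h_{0,*} \times V^h_{0,*}$ for the bilinear form $(u_h,v_h) \mapsto a(u_h,\overline{\mathcal{H}}_T v_h)$ with constant
\begin{equation*}
\beta_{dis} = \frac{2\pi^2}{(2+\sqrt{\mu}T)^2(\pi^2+4\mu T^2)}.
\end{equation*}
Since trial and test spaces have the same (finite) dimension, Remark~\ref{remark discrete system}, item 6, shows that the inf-sup condition \eqref{infsup discreta} is equivalent to the non-degeneracy condition \eqref{non null discreta}, so by the discrete BNB theorem \eqref{iga ode equiv} is well-posed. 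For the stability bound, I would start from the inf-sup inequality, bound the right-hand side by using that $\overline{\mathcal{H}}_T : V^h_{0,*} \to V^h_{*,0}$ is an isometry with respect to $|\cdot|_{H^1(0,T)}$, and apply the definition of the dual norm: for any $v_h \in V^h_{0,*}$,
\begin{equation*}
|\langle f, \overline{\mathcal{H}}_T v_h\rangle_{(0,T)}| \leq \|f\|_{[H^1_{*,0}(0,T)]'}\,|\overline{\mathcal{H}}_T v_h|_{H^1(0,T)} = \|f\|_{[H^1_{*,0}(0,T)]'}\,|v_h|_{H^1(0,T)}.
\end{equation*}
Dividing by $|v_h|_{H^1(0,T)}$, taking the supremum, and using the inf-sup bound with $1/\beta_{dis}$ reproduces \eqref{stab zank}.

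Second, for the quasi-optimality estimate \eqref{quasi opt zank}, I would invoke Proposition~\ref{err BNB}. The inf-sup constant is again $\beta_{dis}$, and the continuity constant of the bilinear form on $V^h_{0,*} \times V^h_{0,*}$ can be read from \eqref{cont of a}: since $|\overline{\mathcal{H}}_T v_h|_{H^1(0,T)} = |v_h|_{H^1(0,T)}$,
\begin{equation*}
|a(u_h,\overline{\mathcal{H}}_T v_h)| \leq \frac{\pi^2 + 4\mu T^2}{\pi^2}\,|u_h|_{H^1(0,T)}\,|v_h|_{H^1(0,T)},
\end{equation*}
so $C_a = (\pi^2+4\mu T^2)/\pi^2$. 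Plugging $C_a$ and $\beta_{dis}$ into the factor $1 + C_a/\beta_{dis}$ of Proposition~\ref{err BNB} and simplifying gives exactly the constant $1 + (2+\sqrt{\mu}T)^2(\pi^2+4\mu T^2)^2/(2\pi^4)$ appearing in \eqref{quasi opt zank}.

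There is no real obstacle here: all the heavy lifting has been done already in the proof of Theorem~\ref{teo stab IGA zank}, which is where the interplay between the mesh-size restriction \eqref{h bound} and the compact perturbation argument is resolved. The only points to be careful about are (i) checking that the isometry $\overline{\mathcal{H}}_T$ transfers both the continuity bound \eqref{cont of a} and the right-hand side estimate without loss, and (ii) arithmetic simplification of $C_a/\beta_{dis}$ to match the stated quasi-optimality constant.
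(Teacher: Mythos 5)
Your proposal is correct and follows essentially the same route as the paper: well-posedness and the stability bound are read off from the discrete inf-sup of Theorem~\ref{teo stab IGA zank} via the BNB framework, and the quasi-optimality constant is obtained as $1+C_a/\beta_{dis}$ with $C_a=(\pi^2+4\mu T^2)/\pi^2$ from \eqref{cont of a} and $\beta_{dis}$ from \eqref{infsup zank}. The only cosmetic difference is that the paper re-derives the Galerkin-projection bound of Proposition~\ref{err BNB} explicitly rather than citing it, which your bookkeeping of constants reproduces exactly.
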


\begin{proof}
	The well-posedness with stability estimate \eqref{stab zank} is an immediate consequence of BNB Theorem \ref{BNB}.
	
	In order to prove \eqref{quasi opt zank}, we repeat, with explicit constants, the proof of Proposition \ref{err BNB}. For any $w \in H^1_{0,*}(0,T)$ we define $w_h:= G_h w$ as the Galerkin projection satisfying 
	\begin{equation*}
	a(G_hw,\overline{\mathcal{H}}_T v_h)=a(w,\overline{\mathcal{H}}_T v_h) \quad \forall v_h \in V^h_{0,*},
	\end{equation*}
	which is well defined thanks to the well-posedness of the discrete problem. Hence, by using the stability estimate \eqref{infsup zank} and the continuity of $a(\cdot,\cdot)$ \eqref{cont of a}, there hold
	\begin{equation*}
	|G_h w|_{H^1(0,T)} \leq \frac{(2+\sqrt{\mu}T)^2(\pi^2+4 \mu T^2)^2}{2 \pi^4} |w|_{H^1(0,T)}.
	\end{equation*}
	Indeed,
	\begin{equation*}
	\begin{split}
		\frac{2 \pi^2}{(2+\sqrt{\mu}T)^2(\pi^2+4\mu T^2)} &|G_hw|_{H^1(0,T)}  
		\overset{\eqref{infsup zank}}\leq \sup_{0 \neq v_h \in V^h_{0,*}} \frac{a(G_hw,\overline{\mathcal{H}}_T v_h)}{|v_h|_{H^1(0,T)}}\\
		&=\sup_{0 \neq v_h \in V^h_{0,*}} \frac{a(w,\overline{\mathcal{H}}_T v_h)}{|v_h|_{H^1(0,T)}}\\
		& \overset{\eqref{cont of a}}\leq  \Bigg(1+\frac{4 T^2 \mu }{\pi^2} \Bigg) |w|_{H^1(0,T)}.
	\end{split}
	\end{equation*}
	Since $u_h=G_h u$ and $v_h = G_h v_h$ for all $v_h \in V^h_{0,*}$, we conclude
	\begin{equation*}
	\begin{split}
	|u-u_h|_{H^1(0,T)} &\leq |u-v_h|_{H^1(0,T)}+|G_h(v_h-u)|_{H^1(0,T)} \\
	& \leq \Bigg[ 1 + \frac{(2+\sqrt{\mu} T)^2(\pi^2+4 \mu T^2)^2}{2 \pi^4} \Bigg] |u-v_h|_{H^1(0,T)}.
	\end{split}
	\end{equation*}
\end{proof}

Thus, we are in a position to state a convergence result for the isogeometric solution $u_h$ of the variational formulation \eqref{var ode equiv}.

\begin{cor}
	Let $u \in H^1_{0,*}(0,T)$ and $u_h \in V^h_{0,*}$ be the unique solutions of the variational formulations \eqref{var ode equiv} and \eqref{iga ode equiv}, respectively. Let  $u \in H^3(0,T)$ and \eqref{h bound} be satisfied. Then, there holds true the error estimate
	\begin{equation}\label{ord conv iga zank}
	|u-u_h|_{H^1(0,T)} \leq \frac{1}{\pi^2} \Bigg[ 1 + \frac{(2+\sqrt{\mu} T)^2(\pi^2+4 \mu T^2)^2}{2 \pi^4} \Bigg] h^2 |u|_{H^3(0,T)}.
	\end{equation}
\end{cor}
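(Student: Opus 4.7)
The plan is to combine the quasi-optimality estimate \eqref{quasi opt zank} with the approximation estimate \eqref{err spline} in the specific case $r=3$, $p=2$, $q=1$, which is exactly tuned to the quadratic $C^1$ isogeometric space used as trial space.

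First, since $h$ satisfies \eqref{h bound}, the previous theorem applies and yields the quasi-optimality bound
\begin{equation*}
|u-u_h|_{H^1(0,T)} \leq \Bigg[ 1 + \frac{(2+\sqrt{\mu} T)^2(\pi^2+4 \mu T^2)^2}{2 \pi^4} \Bigg] \inf_{v_h \in V^h_{0,*}} |u-v_h|_{H^1(0,T)}.
\end{equation*}
So the remaining task is to estimate the best-approximation error in $V^h_{0,*}$ by $h^2 |u|_{H^3(0,T)}/\pi^2$.

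Second, I would pick as test element $v_h = Q^1_2 u$, where $Q^1_2$ is the projection onto $S^2_\Xi(0,T)$ defined by \eqref{Qpq}. Since $u \in H^1_{0,*}(0,T)$, by definition of $Q^1_2$ we have $(Q^1_2 u)(0) = u(0) = 0$, hence $Q^1_2 u \in V^h_{0,*}$, so this is an admissible choice. Then applying the error estimate \eqref{err spline} with $r=3$, $p=2$, $q=1$ gives
\begin{equation*}
\inf_{v_h \in V^h_{0,*}} |u-v_h|_{H^1(0,T)} \leq |u-Q^1_2 u|_{H^1(0,T)} \leq \Big(\frac{h}{\pi}\Big)^2 |u|_{H^3(0,T)} = \frac{h^2}{\pi^2} |u|_{H^3(0,T)}.
\end{equation*}

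Combining the two displayed inequalities gives \eqref{ord conv iga zank} immediately. There is no real obstacle here: the only two small points to check are that the hypothesis $u \in H^3(0,T)$ indeed meets the regularity requirement $r \geq 2$ of the approximation theorem with $p = r-1 = 2$ (it does, with $r=3$), and that $Q^1_2 u$ preserves the homogeneous initial condition encoded in $H^1_{0,*}(0,T)$ so that it lies in the discrete trial space $V^h_{0,*}$ (which follows directly from the definition \eqref{Qpq} of $Q^1_2$).
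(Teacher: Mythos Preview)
Your proof is correct and follows exactly the same approach as the paper: combine the quasi-optimality estimate \eqref{quasi opt zank} with the approximation result \eqref{err spline} for $r=3$, $p=2$, $q=1$. Your additional verification that $Q^1_2 u \in V^h_{0,*}$ (because $(Q^1_2 u)(0)=u(0)=0$) makes explicit a point the paper leaves implicit.
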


\begin{proof}
	Estimate \eqref{ord conv iga zank} is a straightforward consequence of quasi-optimality \eqref{quasi opt zank} and of \eqref{err spline} with $r=3,p=2,q=1$.
\end{proof}

\begin{oss}
	Note that the bound on the mesh-size \eqref{h bound} is about $2.015$ times the bound 
	\begin{equation*}
	h \leq \frac{2 \sqrt{3}}{(2+\sqrt{\mu}T)\mu T}
	\end{equation*}
	of (\cite{Steinbach2019}). It's also about $1.81$ times the more accurate bound 
	\begin{equation*}
	h \leq \frac{\sqrt{3}\pi}{\sqrt{2}(2+\mu T)\mu T}
	\end{equation*}	
	of (\cite{Zank2020}). Therefore, in order to get stability and convergence, our mesh-size can be approximately twice the mesh-size of piece-wise linear FEM.
\end{oss}

\subsubsection{Application of Theorem \ref{theo gard} to quadratic IGA with maximal regularity}
Another way to get a bound on the mesh-size, so that, if it is respected, the well-posedness of IGA, stability and convergence (with explicit constants) are guaranteed, is the theory of \textit{Galerkin method applied to G\aa rding-type problems} discussed in Section \ref{sec prel ode}. 

Let now consider problem \eqref{var ode equiv} and its isogeometric discretization \eqref{iga ode equiv}. Actually, these problems lie within the framework of Theorem \ref{theo gard}. Indeed, as a consequence of Rellich-Kondrachov Theorem, the inclusion $H^1_{0,*}(0,T) \subset L^2(0,T)$ is compact. Furthermore, the following result holds.
\begin{lemma}
	Let $b > \frac{\mu T^2}{2}$. The bilinear form defined in \eqref{bil ode} satisfies the G\aa rding inequality:
	\begin{equation}\label{gard ineq iga}
	a(v,\overline{\mathcal{H}}_T v) \geq \Big(1- \frac{\mu T^2}{2b}\Big)|v|_{H^1(0,T)}-\frac{2+b}{2}\mu \|v\|_{L^2(0,T)} \quad \forall v \in H^1_{0,*}(0,T).
	\end{equation}
\end{lemma}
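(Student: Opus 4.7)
The plan is to compute $a(v,\overline{\mathcal{H}}_T v)$ explicitly and then absorb the cross term via Young's inequality with a carefully chosen parameter.

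First, I would use the definition of $\overline{\mathcal{H}}_T$: for $v \in H^1_{0,*}(0,T)$, $(\overline{\mathcal{H}}_T v)(t) = v(T) - v(t)$, so $\partial_t(\overline{\mathcal{H}}_T v)(t) = -\partial_t v(t)$. Plugging into the bilinear form \eqref{bil ode} and using linearity of the integral, this gives
\begin{equation*}
a(v,\overline{\mathcal{H}}_T v) = |v|_{H^1(0,T)}^2 + \mu\, v(T)\!\int_0^T v(t)\,dt - \mu\|v\|_{L^2(0,T)}^2.
\end{equation*}
The $|v|_{H^1}^2$ term is the good one; the $-\mu\|v\|_{L^2}^2$ term is already of the form appearing in the Gårding inequality; the mixed term $\mu\, v(T)\int_0^T v$ is the only real object to handle.

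Next I would bound the mixed term. By the Cauchy–Schwarz inequality in $L^2(0,T)$, $\bigl|\int_0^T v(t)\,dt\bigr|\le \sqrt{T}\,\|v\|_{L^2(0,T)}$. Since $v(0)=0$, the fundamental theorem of calculus together with Cauchy–Schwarz gives $|v(T)|=\bigl|\int_0^T \partial_t v\bigr|\le \sqrt{T}\,|v|_{H^1(0,T)}$. Combining these two estimates yields
\begin{equation*}
\Bigl|\mu\,v(T)\!\int_0^T v(t)\,dt\Bigr|\le \mu T\,|v|_{H^1(0,T)}\,\|v\|_{L^2(0,T)}.
\end{equation*}

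Finally I would apply Young's inequality $\alpha\beta \le \tfrac{1}{2\varepsilon}\alpha^2 + \tfrac{\varepsilon}{2}\beta^2$ with $\alpha=|v|_{H^1(0,T)}$, $\beta=\|v\|_{L^2(0,T)}$, and the choice $\varepsilon = b/T$ (this is the key tuning, matched to the form of the stated inequality). Then
\begin{equation*}
\mu T\,|v|_{H^1(0,T)}\,\|v\|_{L^2(0,T)} \le \frac{\mu T^2}{2b}\,|v|_{H^1(0,T)}^2 + \frac{\mu b}{2}\,\|v\|_{L^2(0,T)}^2,
\end{equation*}
and subtracting this bound from the identity for $a(v,\overline{\mathcal{H}}_T v)$ produces exactly $\bigl(1-\tfrac{\mu T^2}{2b}\bigr)|v|_{H^1(0,T)}^2 - \tfrac{2+b}{2}\mu \|v\|_{L^2(0,T)}^2$, which is \eqref{gard ineq iga} (reading the right-hand side with the understood squares). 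The hypothesis $b > \mu T^2/2$ is exactly what makes the $|v|_{H^1}^2$ coefficient positive, so that the inequality is genuinely of Gårding type and fits the framework of Theorem \ref{theo gard}. There is no real obstacle here; the only subtle point is choosing $\varepsilon=b/T$ so the coefficients land in the prescribed form.
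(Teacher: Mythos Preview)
Your proof is correct and follows essentially the same route as the paper: expand $a(v,\overline{\mathcal{H}}_T v)$, bound the cross term $\mu\,v(T)\int_0^T v$ via Cauchy--Schwarz and the fundamental theorem of calculus to get $\mu T\,|v|_{H^1}\|v\|_{L^2}$, then split with Young's inequality so that the coefficients match. Your observation that the stated inequality is missing squares on the norms is also correct; the paper's own proof carries the squares throughout.
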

\begin{proof}
	The bilinear form defined in \eqref{bil ode} satisfies the following inequalities
	\begin{equation*}
	\begin{split}
	a(v,\overline{\mathcal{H}}_T v)&= \langle \partial_t v, \partial_t v \rangle _{L^2(0,T)}- \mu \langle v,v \rangle_{L^2(0,T)}+\mu \langle v,v(T) \rangle_{L^2(0,T)} \\
	& \geq |v|^2_{H^1(0,T)}-\mu \|v\|^2_{L^2(0,T)}-\mu \int_0^T |v(t)v(T)| \ dt \\
	& \overset{(C-S)}\geq |v|^2_{H^1(0,T)}-\mu \|v\|^2_{L^2(0,T)} - \mu \|v\|_{L^2(0,T)}\sqrt{T}|v(T)| \\
	& \overset{(F.T.C),(Young)}\geq |v|^2_{H^1(0,T)}-\mu \|v\|^2_{L^2(0,T)} - \frac{\mu b}{2} \|v\|^2_{L^2(0,T)}-\frac{\mu T^2}{2b} |v|^2_{H^1(0,T)}\\
	& = \Big(1- \frac{\mu T^2}{2b}\Big)|v|^2_{H^1(0,T)}-\frac{2+b}{2}\mu \|v\|^2_{L^2(0,T)} ,
	\end{split}
	\end{equation*}
	for all $b >0$, where we used Cauchy Schwarz inequality, Young inequality and the Fundamental Theorem of Calculus. In order to obtain positive coefficients in \eqref{gard ineq iga} we add the constraint $b > \frac{\mu T^2}{2}$.
\end{proof}	
	
Also, Theorem \ref{zank} guarantees that problem \eqref{var ode equiv} is well-posed, then, the only $u_0 \in H^1_{0,*}(0,T)$ such that $a(u_0,\overline{\mathcal{H}}_T v)=0$ for all $v \in H^1_{0,*}(0,T)$ is $u_0=0$. Therefore, we conclude the following result.

\begin{theorem}\label{theo gard iga}
	Let 
	\begin{equation}\label{h bound gard}
	h \leq \frac{\pi^5}{(\pi^2+ 4\mu T^2)[\pi^2+2\mu T^2 (2+\sqrt{\mu} T)]} \sqrt{\frac{2b - \mu T^2}{2b(2+b)\mu}}
	\end{equation}
	be satisfied, with $b > \frac{\mu T^2}{2}$. 
	Then, problem \eqref{iga ode equiv} is well-posed with the stability estimate
	\begin{equation}\label{stab iga gard}
	\|u_h\|_{H^1(0,T)} \leq (2+ \sqrt{\mu} T ) \Bigg[ \frac{3b + \mu T^2\big(\frac{8b}{\pi^2}-\frac{1}{2} \big)}{2b-\mu T^2} \Bigg] \|f\|_{[H^1_{*,0}(0,T)]'},
	\end{equation}
	where $f \in [H^1_{*,0}(0,T)]' $ and $u_h$ is the unique solution of \eqref{iga ode equiv}. Moreover, a quasi-optimality estimate holds
	\begin{equation}\label{opt iga gard}
	|u-u_h|_{H^1(0,T)} \leq \frac{4b}{\pi^2}\frac{ \pi^2+ 4 \mu T^2}{2b-\mu T^2} \inf_{v_h \in V^h_{0,*}} |u-v_h|_{H^1(0,T)},
	\end{equation}
	where $u \in H^1_{0,*}(0,T)$ is the unique solution of \eqref{var ode equiv}.
\end{theorem}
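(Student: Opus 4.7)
The plan is to apply Theorem \ref{theo gard} to the variational problem \eqref{var ode equiv} with Hilbert space $H = H^1_{0,*}(0,T)$ endowed with the norm $|\cdot|_{H^1(0,T)}$, larger space $V = L^2(0,T)$ into which $H$ embeds compactly by Rellich--Kondrachov, and the bilinear form $\tilde{a}(u,v) := a(u, \overline{\mathcal{H}}_T v)$ obtained after composing with the isometry $\overline{\mathcal{H}}_T$. The continuity constant $C_a = 1 + \frac{4 T^2 \mu}{\pi^2}$ is read off from \eqref{cont of a}, the G\aa rding inequality \eqref{gard ineq iga} supplies $\alpha = 1 - \frac{\mu T^2}{2b}$ and $C_V = \frac{(2+b)\mu}{2}$ for any $b > \mu T^2/2$, and the injectivity hypothesis of Theorem \ref{theo gard} is provided by Theorem \ref{zank}. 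Thus all the abstract ingredients are in place; what remains is to verify the threshold condition on the adjoint approximability $\eta(V^h_{0,*})$ and to unwind the resulting constants.

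First I would bound the adjoint solution. Given $g \in L^2(0,T)$, the element $z_g \in H^1_{0,*}(0,T)$ satisfying $\tilde{a}(v, z_g) = (g,v)_{L^2(0,T)}$ for all $v \in H^1_{0,*}(0,T)$ inherits the continuous stability constant from Remark \ref{oss gard}(2). Combined with the estimate $\|g\|_{[H^1_{0,*}(0,T)]'} \leq (2T/\pi)\|g\|_{L^2(0,T)}$, itself a consequence of \eqref{Poinc}, this yields
\begin{equation*}
|z_g|_{H^1(0,T)} \leq \frac{T(2+\sqrt{\mu}T)}{\pi}\,\|g\|_{L^2(0,T)}.
\end{equation*}
Next, integration by parts in the adjoint identity (valid because $v(0)=0$) produces the strong form $-\partial_{tt}z_g - \mu z_g + \mu z_g(T) = g$ almost everywhere, together with the natural boundary condition $\partial_t z_g(T) = 0$. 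From here $\|\partial_{tt} z_g\|_{L^2(0,T)}$ is controlled by triangle inequality, bounding $\|z_g\|_{L^2}$ via Poincar\'e \eqref{Poinc} and the trace value $|z_g(T)| \leq \sqrt{T}\,|z_g|_{H^1(0,T)}$ via the fundamental theorem of calculus, giving an inequality that must be of the form $\|\partial_{tt}z_g\|_{L^2(0,T)} \leq \frac{\pi^2 + 2\mu T^2(2+\sqrt{\mu}T)}{\pi^2}\,\|g\|_{L^2(0,T)}$ in order to match the stated threshold.

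Applying the spline estimate \eqref{err spline} with $r=p=2$, $q=1$ to $z_g$ yields
\begin{equation*}
\inf_{v_h \in V^h_{0,*}}|z_g - v_h|_{H^1(0,T)} \leq |z_g - Q^1_2 z_g|_{H^1(0,T)} \leq \frac{h}{\pi}\,\|\partial_{tt} z_g\|_{L^2(0,T)},
\end{equation*}
so that $\eta(V^h_{0,*})$ is bounded by $h$ times an explicit function of $T$ and $\mu$. Imposing the threshold condition $\eta(V^h_{0,*}) \leq C_a^{-1}\sqrt{\alpha/(2C_V)}$ and solving for $h$ reproduces exactly \eqref{h bound gard}. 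The conclusion of Theorem \ref{theo gard} then yields well-posedness of \eqref{iga ode equiv} with stability constant $C_{stab}(1 + 2C_a/\alpha)$ and quasi-optimality constant $2C_a/\alpha$; substituting $C_{stab} = (2+\sqrt{\mu}T)/2$, $C_a = 1 + 4T^2\mu/\pi^2$ and $\alpha = 1 - \mu T^2/(2b)$ and simplifying gives \eqref{stab iga gard} and \eqref{opt iga gard}.

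The main obstacle I expect is keeping the constants sharp throughout, in particular in estimating $\|\partial_{tt} z_g\|_{L^2(0,T)}$ from the strong form of the adjoint problem: a naive triangle-inequality combining Poincar\'e for $\|z_g\|_{L^2}$ with the trace bound for $|z_g(T)|$ risks an excess factor (of order $\pi + 2$ instead of $2$) that would weaken \eqref{h bound gard}. Careful use of $z_g(0)=0$ and $\partial_t z_g(T)=0$ --- possibly together with an energy identity obtained by testing the strong equation against $z_g$ or $\partial_t z_g$ --- is the place where the analysis must be done precisely. The remaining algebra to identify $2C_a/\alpha$ with the fraction in \eqref{opt iga gard} and to collect the stability constant in the form \eqref{stab iga gard} is routine.
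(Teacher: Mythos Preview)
Your overall strategy is exactly the paper's: apply Theorem~\ref{theo gard} with $H=H^1_{0,*}(0,T)$, $V=L^2(0,T)$, continuity constant from \eqref{cont of a}, G\aa rding constants from \eqref{gard ineq iga}, and injectivity from Theorem~\ref{zank}; then bound $\eta(V^h_{0,*})$ via the spline estimate \eqref{err spline} applied to the adjoint solution $z_g$, and solve the threshold inequality \eqref{cond eta} for $h$. The algebra identifying $2C_a/\alpha$ and $C_{stab}(1+2C_a/\alpha)$ with the constants in \eqref{opt iga gard} and \eqref{stab iga gard} is indeed routine and correct.

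The one place where you are uncertain --- and where your proposed splitting would lose a factor --- has a much simpler resolution than the energy identities you suggest. Do \emph{not} separate the two $\mu$-terms in the strong form. Instead, keep them grouped:
\[
\partial_{tt} z_g \;=\; \mu\bigl(z_g(T)-z_g\bigr)-g \;=\; \mu\,\overline{\mathcal{H}}_T z_g - g .
\]
The function $\overline{\mathcal{H}}_T z_g$ lies in $H^1_{*,0}(0,T)$, so the Poincar\'e inequality \eqref{Poinc} applies to it directly and gives
\[
\|\overline{\mathcal{H}}_T z_g\|_{L^2(0,T)} \leq \frac{2T}{\pi}\,|\overline{\mathcal{H}}_T z_g|_{H^1(0,T)} = \frac{2T}{\pi}\,|z_g|_{H^1(0,T)} .
\]
A single triangle inequality then yields
\[
\|\partial_{tt} z_g\|_{L^2(0,T)} \leq \|g\|_{L^2(0,T)} + \frac{2\mu T}{\pi}\,|z_g|_{H^1(0,T)} \leq \Bigl[1+\frac{2\mu T^2}{\pi^2}(2+\sqrt{\mu}T)\Bigr]\|g\|_{L^2(0,T)},
\]
with the factor $2$ (not $2+\pi$) that you were worried about. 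The trace bound on $|z_g(T)|$ and the natural condition $\partial_t z_g(T)=0$ are not needed at all. With this step in place your proof is complete and coincides with the paper's.
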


\begin{proof}
	Let $b > \frac{\mu T^2}{2}$ and let $\eta(V^h_{0,*})$ be the parameter defined in \eqref{eta gard} and related to the sequence of isogeometric spaces in \eqref{iga space}. By using the projection operator \eqref{Qpq} with $p=2$, $q=1$, the error estimate \eqref{err spline} with $r=2$, the Poincaré inequality \eqref{Poinc} and the a priori estimate of the abstract problem \eqref{stab ode}, we obtain
	\begin{equation}\label{eta iga}
	\eta(V^h_{0,*}) \leq \frac{h}{\pi} \Bigg[ 1 + \frac{2\mu T^2}{\pi^2}(2+\sqrt{\mu} T) \Bigg].
	\end{equation}
    Let see why \eqref{eta iga} holds. Let $z_g \in H^1_{0,*}(0,T)$ be the solution of the adjoint problem \eqref{adj} with respect to our context, i.e., given $g \in L^2(0,T)$, $z_g$ is the unique element of $H^1_{0,*}(0,T)$ that satisfies
	\begin{equation*}
	a(v,\overline{\mathcal{H}}_T z_g)=(g,v)_{L^2(0,T)} \quad \forall v \in H^1_{0,*}(0,T).
	\end{equation*}
	Therefore, $z:=\overline{\mathcal{H}}_Tz_g \in H^1_{*,0}(0,T)$ is the unique solution of
	\begin{equation*}
	a(v,z)=(g,v)_{L^2(0,T)} \quad \forall v \in H^1_{0,*}(0,T),
	\end{equation*}
	i.e.,
	\begin{equation*}
	-\langle \partial_t v,\partial_t z \rangle_{L^2(0,T)} + \mu \langle v,z \rangle_{L^2(0,T)} = (g,v)_{L^2(0,T)} \quad \forall v \in H^1_{0,*}(0,T).
	\end{equation*}
	As a consequence, the distributional derivative $\partial_{tt} z$ is represented by $g-\mu z \in L^2(0,T)$. Hence, $z \in H^2(0,T)$ and $z_g = \overline{\mathcal{H}}_T^{-1} z = z(0)-z \in H^2(0,T)$ with 
	\begin{equation*}
	\partial_{tt} z_g = \mu \overline{\mathcal{H}}_T z_g - g.
	\end{equation*}
	Note that the adjoint problem has the same stability estimate (w.r.t. the dual norm $\|g\|_{[H^1_{0,*}(0,T)]'}$) of the primal problem, as noted in the second point of Remark \ref{oss gard}. Then, the following relations hold
	\begin{equation*}
	\begin{split}
\inf_{v_h \in V^h_{0,*}} |z_g-v_h|_{H^1(0,T)} &\leq |z_g - Q^1_2 z_g|_{H^1(0,T)} \overset{\eqref{err spline}}\leq \frac{h}{\pi} \|\partial_{tt} z_g\|_{L^2(0,T)}\\
&=\frac{h}{\pi}\|\mu \overline{\mathcal{H}}_T z_g-g\|_{L^2(0,T)}\\
& \leq \frac{h}{\pi} \Big(\|g\|_{L^2(0,T)} + \mu \|\overline{\mathcal{H}}_T z_g\|_{L^2(0,T)} \Big)\\
&\overset{\eqref{Poinc}}\leq \frac{h}{\pi} \Big(\|g\|_{L^2(0,T)} + \frac{2T}{\pi}\mu |z_g|_{H^1(0,T)} \Big)\\
&\overset{\eqref{stab ode}}\leq \frac{h}{\pi} \Big(\|g\|_{L^2(0,T)} + \frac{4T^2}{\pi^2}\mu \frac{2 + \sqrt{\mu} T}{2} \|g\|_{L^2(0,T)} \Big)\\
&=\frac{h}{\pi} \Big[ 1 + \frac{2\mu T^2}{\pi^2}(2+\sqrt{\mu}T) \Big] \|g\|_{L^2(0,T)},
	\end{split}
	\end{equation*}
	which implies estimate \eqref{eta iga}. Therefore, from condition \eqref{cond eta} where \eqref{gard ineq iga} is the G\aa rding inequality of our problem \eqref{iga ode equiv}, we deduce that, if 
	\begin{equation*}
	\frac{h}{\pi} \Bigg[ 1 + \frac{2\mu T^2}{\pi^2}(2+\sqrt{\mu}T) \Bigg] \leq \frac{\pi^2}{\pi^2+4\mu T^2} \sqrt{\frac{2b - \mu T^2}{2b(2+b)\mu}},
	\end{equation*}
i.e.,
\begin{equation*}
h \leq \frac{\pi^5}{(\pi^2+4 \mu T^2)[\pi^2+2\mu T^2 (2+\sqrt{\mu} T)]} \sqrt{\frac{2b - \mu T^2}{2b(2+b)\mu}},
\end{equation*}	
then problem \eqref{iga ode equiv} is well-posed, and conditions \eqref{stab iga gard}, \eqref{opt iga gard} are satisfied. 
\end{proof}

\begin{oss}
	The choice
	\begin{equation}\label{b}
	b=\frac{\mu T^2 + \sqrt{\mu^2 T^4+ 4 \mu T^2}}{2}
	\end{equation}
	maximises $\sqrt{\frac{2b - \mu T^2}{2b(2+b)\mu}}$ in the upper bound of \eqref{h bound gard}.
\end{oss}

As a consequence of Theorem \ref{theo gard iga}, we can state a convergence result for the isogeometric solution $u_h$ of the variational formulation \eqref{var ode equiv}.

\begin{cor}\label{cor gard}
Let $u \in H^1_{0,*}(0,T)$ and $u_h \in V^h_{0,*}$ be the unique solutions of the variational formulations \eqref{var ode equiv} and \eqref{iga ode equiv}, respectively. Let $u \in H^3(0,T)$ and \eqref{h bound gard} be satisfied. Then, there holds true the error estimate
\begin{equation}\label{ord conv iga gard}
|u-u_h|_{H^1(0,T)} \leq \frac{4b}{\pi^4}\frac{ \pi^2+ 4 \mu T^2}{2b-\mu T^2} h^2 |u|_{H^3(0,T)}.
\end{equation}
\end{cor}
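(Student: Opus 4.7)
The plan is to combine two ingredients that are already in hand: the quasi-optimality bound \eqref{opt iga gard} from Theorem \ref{theo gard iga}, and the spline approximation estimate \eqref{err spline}. Since the hypothesis \eqref{h bound gard} on $h$ is precisely what makes Theorem \ref{theo gard iga} applicable, the quasi-optimality bound is already available, and all that remains is to bound the best-approximation error on its right-hand side.

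First I would instantiate \eqref{err spline} with the parameters suited to the present situation, namely $r=3$, $p=2$, $q=1$. Since $u \in H^3(0,T) \cap H^1_{0,*}(0,T)$, the projection $Q^1_2 u$ is well-defined and, as shown earlier, it preserves the value at $t=0$ (by construction in \eqref{Qpq}), hence $Q^1_2 u \in V^h_{0,*}$. The estimate \eqref{err spline} then yields
\begin{equation*}
\inf_{v_h \in V^h_{0,*}} |u - v_h|_{H^1(0,T)} \leq |u - Q^1_2 u|_{H^1(0,T)} \leq \frac{h^2}{\pi^2} |u|_{H^3(0,T)}.
\end{equation*}

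Next I would plug this into the quasi-optimality estimate \eqref{opt iga gard}, obtaining
\begin{equation*}
|u-u_h|_{H^1(0,T)} \leq \frac{4b}{\pi^2}\frac{\pi^2 + 4\mu T^2}{2b - \mu T^2} \cdot \frac{h^2}{\pi^2}\, |u|_{H^3(0,T)},
\end{equation*}
which, after collecting the $\pi$ factors, is exactly the bound \eqref{ord conv iga gard}. There is no real obstacle here: both ingredients have been established in the preceding subsections, and the corollary is obtained by pure juxtaposition, in full analogy with the corollary following Theorem \ref{teo stab IGA zank}. The only item worth a moment of care is verifying that $Q^1_2 u$ is an admissible test element in $V^h_{0,*}$, i.e.\ that the interpolation-type operator respects the homogeneous initial condition at $t=0$, which is immediate from the definition \eqref{Qpq}.
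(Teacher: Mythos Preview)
Your proposal is correct and follows exactly the paper's approach: the paper's proof simply states that \eqref{ord conv iga gard} is a straightforward consequence of the quasi-optimality estimate \eqref{opt iga gard} and of \eqref{err spline} with $r=3$, $p=2$, $q=1$. Your added remark that $Q^1_2 u \in V^h_{0,*}$ (because the operator \eqref{Qpq} preserves the value at $t=0$) makes explicit the only point the paper leaves implicit.
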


\begin{proof}
	Estimate \eqref{ord conv iga gard} is a straightforward consequence of quasi-optimality \eqref{opt iga gard} and of \eqref{err spline} with $r=3,p=2,q=1$.
\end{proof}

\begin{oss}\label{oss grado più alto}
The analysis proposed in this Section, which is based on techniques that exploit the G\aa rding inequality and are alternative to those proposed by O. Steinbach and M. Zank (extended to the IGA case in the previous Section) is useful to understand how the IGA behaves in the case of generic polynomial degree and maximal regularity. Indeed, from the proof of Theorem \ref{theo gard iga} it emerges that the threshold on $h$ \eqref{h bound gard} does not change when the polynomial degree and the regularity of the spline test and trial functions are raised. Only the order of convergence of Corollary \ref{h bound gard} changes: for a generic polynomial degree $p \geq 1$ and an exact solution $u \in H^{p+1}(0,T)$, the order of convergence is $p$. 
On the other hand, how the upper bound \eqref{h bound} behaves in $h$ is not immediately clear from the proof of Theorem \ref{teo stab IGA zank}. This is a consequence of the various auxiliary problems considered, in which the regularity of the corresponding solutions is a key point. 
\end{oss}	

\begin{oss}\label{oss asympt}
	Asymptotically for $\mu \rightarrow \infty$ the optimal value \eqref{b} satisfies $b\simeq\mu T^2$. We then obtain
	\begin{equation*}
	h \leq \overline{h}_2 :=\frac{\pi^5}{(\pi^2+ 4\mu T^2)[\pi^2+2\mu T^2 (2+\sqrt{\mu} T)]} \sqrt{\frac{1}{2\mu(2+\mu T^2)}},
	\end{equation*}
	in place of \eqref{h bound gard} with a general value $b > \frac{\mu T^2}{2}$. 
	
	Let us now recall estimate \eqref{h bound} obtained by extending Theorem $4.7$ of (\cite{Coercive}) to quadratic IGA with maximal regularity, i.e.,
	\begin{equation*}
	h \leq \overline{h}_1 :=\frac{\pi^2}{\sqrt{2}(2+\sqrt{\mu}T)\mu T}.
	\end{equation*}
	Therefore, the techniques of O. Steinbach and M. Zank and those ones using the G\aa rding inequality \eqref{gard} give constraints on $h$ of order
	\begin{gather*}
	\overline{h}_1 \simeq \mathcal{O}(\mu^{-3/2}),\\
	\overline{h}_2 \simeq \mathcal{O}(\mu^{-7/2}),
	\end{gather*}
respectively. Thus, we conclude that, asymptotically, threshold \eqref{h bound gard} is a stronger constraint than \eqref{h bound}. However, asymptotically, stability estimate \eqref{stab iga gard} behaves as $\mathcal{O}(\mu^{3/2})$, whereas, stability estimate \eqref{stab zank} behaves as $\mathcal{O}(\mu^{2})$. Therefore, we conclude that stability estimate \eqref{stab iga gard} has a slower growth than \eqref{stab zank} for $\mu \rightarrow \infty$. 
\end{oss}
	
% Chapter 4

\chapter{Numerical methods for $\partial_{tt}u+ \mu u=f$} % Main chapter title

\label{Chapter4} % For referencing the chapter elsewhere, use \ref{Chapter3} 

In this Chapter we numerically study our model problem \eqref{eq ode}. 

All the simulations are performed in MATLAB on a Intel(R) Core(TM) i3-4005U CPU @ 1.70GHz 1.70 GHz laptop, with 4,00 GB RAM.

All the isogeometric discretizations are performed using GeoPDEs, which is an open source and free package for the research and teaching of Isogeometric Analysis, written in Octave and fully compatible with MATLAB. See (\cite{GeoPDEs}) for a complete explanation of its design and its main features.

%----------------------------------------------------------------------------------------

% Define some commands to keep the formatting separated from the content 

%----------------------------------------------------------------------------------------

\section{Conditioned stability}

\subsection{Errors committed by piecewise continuous linear finite element method}\label{sec errors fem}

In (\cite{Steinbach2019, Zank2020, Coercive}) the authors study the conditioned stability of the piecewise continuous linear finite element discretization of \eqref{eq ode} and introduce a stabilized method (\cite{Steinbach2019, Zank2020}), which they then extend to the wave equation \eqref{eqonde} (\cite{Steinbach2019, Zank2020}). Indeed, as noticed in (\cite{Steinbach2019, Zank2020}), the stability of a conforming (w.r.t. classic anisotropic Sobolev spaces) tensor-product space-time discretization with pie\-ce\-wise linear, continuous solution and test functions of \eqref{eqonde}, requires a Cou\-rant – Friedrichs – Lewy (CFL) condition, i.e., 
\begin{equation}\label{CFL 2}
h_t \leq C h_x,
\end{equation}
with a constant $C > 0$, depending on the constant of a spatial inverse inequality, where $h_t$ and $h_x$ are the uniform mesh-sizes in time and space. In particular, constraint \eqref{CFL 2} follows from the conditioned stability of the piecewise continuous linear FEM applied to \eqref{eq ode} with uniform mesh-size. 

In this Section we briefly recall O. Steinbach and M. Zank's main results for the conditioned stability of FEM discretization of the ODE \eqref{eq ode} and we show some numerical results that we obtain by testing their theoretical considerations. 

Let us define the discrete spaces
\begin{gather}
S^1_{h;0,*}:=\{v_h \in S^1_h(0,T)| \ v_h(0)=0\}=S^1_h(0,T) \cap H^1_{0,*}(0,T), \label{fem trial}\\
S^1_{h;*,0}:=\{v_h \in S^1_h(0,T)| \ v_h(T)=0\}=S^1_h(0,T) \cap H^1_{*,0}(0,T), \label{fem test}
\end{gather}
where $S^1_h(0,T)$ is the classic space of piecewise continuous linear functions on $[0,T]$ with maximal mesh-size $h$.

In Theorem 4.7 of (\cite{Coercive}), which we extend to the isogeometric case with Theorem \ref{teo stab IGA zank}, the authors prove that if the mesh-size $h$ satisfies 
\begin{equation}\label{zank 1 stima}
h \leq \frac{2\sqrt{3}}{(2+\sqrt{\mu}T)\mu T},
\end{equation}
 then the discrete Galerkin-Bubnov formulation of \eqref{eq ode} is well-posed with a uniform (w.r.t $h$) lower bound on the discrete inf-sup, i.e.,
 \begin{equation}\label{infsup fem}
 \frac{8}{(2+\sqrt{\mu}T)^2(4+\mu T^2)} |u_h|_{H^1(0,T)} \leq \sup_{0 \neq v_h \in S^1_{h;0,*}} \frac{a(u_h,\overline{\mathcal{H}}_T v_h)}{|v_h|_{H^1(0,T)}},
 \end{equation}
 for all $u_h \in S^1_{h;0,*}$, where $a(\cdot,\cdot)$ is the bilinear form defined in \eqref{bil ode} and $\overline{\mathcal{H}}_T(\cdot)$ is the isometric isomorphism defined in \eqref{HT}, and where we corrected a missing second power of $\frac{T}{2}$.
 
 \begin{oss}
 	As a consequence of the sharp Poincaré's inequalities \eqref{Poinc}, the estimate \eqref{zank 1 stima} can be slightly improved with the more accurate bound 
 	\begin{equation}\label{zank più sharp}
 	h \leq \frac{\sqrt{3}\pi}{\sqrt{2}(2+\sqrt{\mu} T)\mu T},
 	\end{equation}	
 	as observed in (\cite{Zank2020}). Therefore, we consider this bound in our numerical experiments.
 \end{oss}

\begin{oss}
	The well-posedness of the Galerkin-Bubnov FEM discretization of \eqref{eq ode} is equivalent to the well-posedness (with the same stability constants) of its Galerkin-Petrov FEM discretization. Indeed, as in the isogeometric setting, the restriction of operator  $\overline{\mathcal{H}}_T(\cdot)$ to the discrete trial space $S^1_{h;0,*}$ is actually an isometric isomorphism between $S^1_{h;0,*}$ and the test space $S^1_{h;*,0}$.
\end{oss}

With classic arguments, in (\cite{Coercive}) the authors prove that the discrete solution of the piecewise continuous linear FEM discretization of \eqref{eq ode} converges linearly in $|\cdot|_{H^1(0,T)}$ to the solution $u$ of \eqref{var ode} if $u \in H^2(0,T)$ (Theorem 4.8, \cite{Coercive}). Moreover, using Aubin-Nitsche's trick, it is also possible to prove quadratic convergence in $\|\cdot\|_{L^2(0,T)}$ norm, if the exact solution satisfies $u \in H^2(0,T)$. \\ \bigskip

Under the assumption of a uniform mesh-size, the linear system obtained from the Galerkin-Petrov FEM discretization of \eqref{eq ode} can be seen as a finite difference scheme; see Remark 4.2.8 of (\cite{Zank2020}). In these condition, the stability of the corresponding finite difference scheme holds if and only if
\begin{equation}\label{stab Linfty}
h < \sqrt{\frac{12}{\mu}},
\end{equation}
as a consequence of Chapter III.3 of (\cite{Hairer}). Let us note that the stability considered in \eqref{stab Linfty} is a uniform (w.r.t. $h$) boundness condition for the discrete solution in $\|\cdot\|_{L^\infty(0,T)}$ norm, which is necessary for stability of the discrete solution in classic Sobolev seminorms, due to the fact that the norm $\|\cdot\|_{L^\infty(0,T)}$ is controlled in $H^1(0,T)$, as a consequence of Morrey's Theorem.

\begin{oss}
Constraint \eqref{stab Linfty} is related to the CFL condition for the tensor-product space-time discretization with piecewise continuous linear solution and test functions of the wave propagatin problem \eqref{eqonde}. The details are explained in (\cite{Steinbach2019, Zank2020}). 
\end{oss} 

\vspace{0.8cm}
 As in (\cite{Steinbach2019, Zank2020}), as a numerical example for the Galerkin-Petrov finite element methods  we consider a uniform discretization of the time interval $(0,T)$ with $T = 10$ and a mesh-size $h = T/N$. For $\mu = 1000$ we consider the strong solution $u(t) = \sin^2\Big(\frac{5}{4}\pi t\Big)$
and we compute the integrals appearing at the right-hand side using high-order integration rules.

\begin{figure}[h!]	
	\centering
	\includegraphics[scale=0.3]{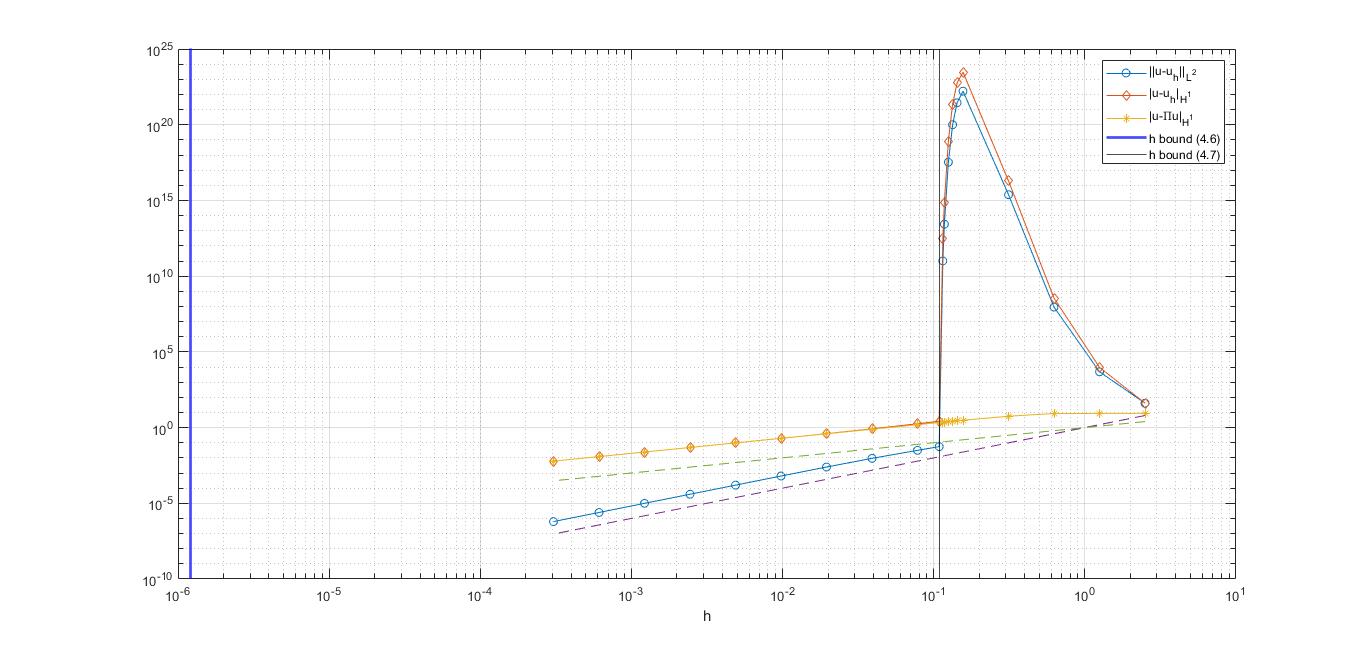}
	\caption{A $\log$-$\log$ plot of errors committed by piecewise continuous linear FEM in $|\cdot|_{H^1(0,T)}$ seminorm and in $\|\cdot\|_{L^2(0,T)}$ norm, with respect to a uniform mesh-size $h$. Also, the best approximation error in $|\cdot|_{H^1(0,T)}$ seminorm and bounds \eqref{zank più sharp}, \eqref{stab Linfty} are represented. The square of the wave number is $\mu=1000$ and the final time is $T=10$.}
	\label{fig:err fem}
\end{figure}

\begin{figure}[h!]
	\centering
	\includegraphics[scale=0.3]{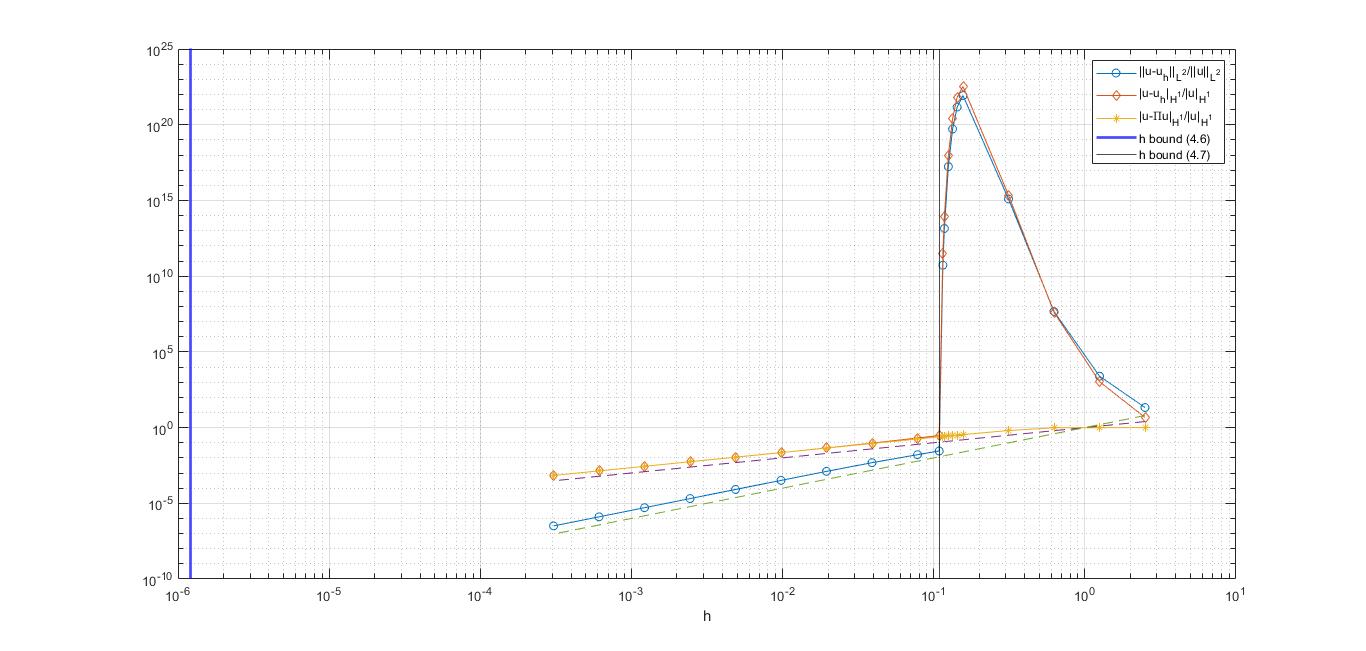}
	\caption{A $\log$-$\log$ plot of relative errors committed by piecewise continuous linear FEM in $|\cdot|_{H^1(0,T)}$ seminorm and in $\|\cdot\|_{L^2(0,T)}$ norm, with respect to a uniform mesh-size $h$. Also, the best approximation error in $|\cdot|_{H^1(0,T)}$ seminorm and bounds \eqref{zank più sharp}, \eqref{stab Linfty} are represented. The square of the wave number is $\mu=1000$ and the final time is $T=10$.}
\end{figure}

\begin{figure}
	\hspace{-1cm}
	\begin{minipage}[h!]{8.5cm}
		\centering
		\includegraphics[width=6.5cm]{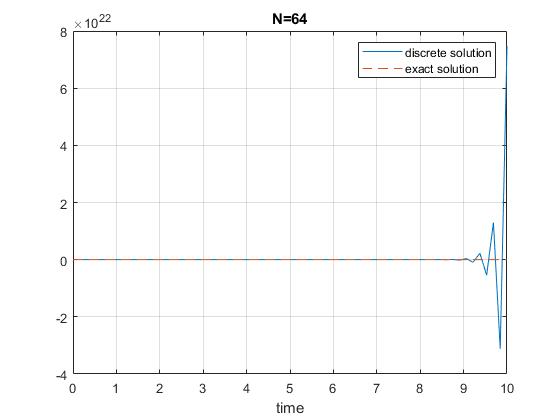}
		\caption{Exact and discrete solutions by piecewise linear FEM for $\mu = 1000$ and $N = 64$ elements (i.e., $h = 0.1563$).}
	\end{minipage} 
	\hspace{-1cm}
	\begin{minipage}[h!]{8.5cm}
		\centering
		\includegraphics[width=6.5cm]{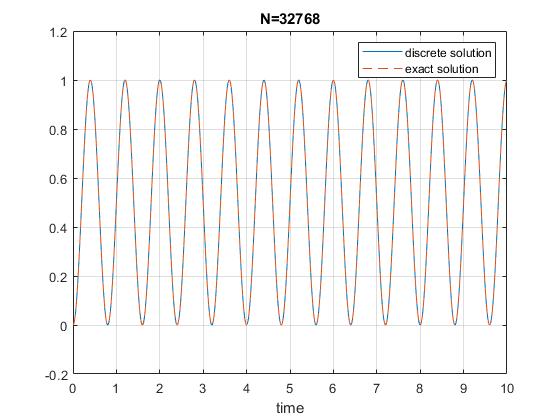}
		\caption{Exact and discrete solutions by piecewise linear FEM for $\mu = 1000$ and $N=32768$ elements (i.e., $h=0.0003$).}
	\end{minipage}
\end{figure}

We consider approximation errors since they are a reflection of instability. 

The minimum number of elements chosen is $N = 4$, the maximum number is $N=32768$, as in (\cite{Steinbach2019}). 

From Figure \ref{fig:err fem}, we can see that the maximal error occurs at $N = 64$: it is of order of $10^{23}$ in seminorm $|\cdot|_{H^1(0,T)}$ and $10^{22}$ in norm $\|\cdot\|_{L^2(0,T)}$, as we expect from (\cite{Steinbach2019}). 

As we can note in Figure \ref{fig:err fem}, there is convergence only for sufficiently small mesh-size $h$.  In particular, convergence is linear in $|\cdot|_{H^1(0,T)}$ seminorm and is quadratic in $\|\cdot\|_{L^2(0,T)}$ norm, as we expect. Clearly, the bound \eqref{zank più sharp} is suboptimal, since convergence starts for $h$ much larger than this threshold. Instead, the bound \eqref{stab Linfty} seems to be sharp with respect to the error committed by the finite elements: this suggests that there is a uniform (w.r.t. $h$) inf-sup value already for $h < \sqrt{\frac{12}{\mu}}$. Thus, it remains open to improve assumption \eqref{zank più sharp} to ensure a uniform inf-sup condition of \eqref{infsup fem} type. 

\begin{oss}\label{pollut effect}
Let us define $k:=\sqrt{\mu}$ to follow the notation used in the literature we refer to in this comment. It is well known that the numerical solution of the Helmholtz equation 
\begin{equation*}
-\Delta_x u(x)-k^2 u(x) = f(x)
\end{equation*}
with boundary conditions, obtained by classic Galerkin FEM, differs significantly from the best approximation with increasing wave number $k$. This phenomena is the so-called \textit{pollution effect}. The Galerkin FEM leads to quasi-optimal error estimates in which the constant factor by which the accuracy of the Galerkin solution differs from the best approximation error increases with increasing wave number (\cite{HARARI199159, IHLENBURG19959, BABUSKA1995325, Babuska2000}). On the other hand, it was shown in (\cite{Aziz1988ATP}) that the condition “$k^2 h$ is small” would be sufficient to guarantee that the error of the Galerkin solution is of the same magnitude as the error of the best approximation. However, this condition involves considerable computational complexity in three dimensions (\cite{Babuska2000}). Therefore, many attempts have been made in the mathematical and engineering literature to overcome this lack of robustness of the classic Galerkin FEM with respect to $k$ (\cite{HARARI199159,BABUSKA1995325}).

Our model problem \eqref{eq ode} is a wave propagation problem of Helmholtz type with initial conditions, instead of boundary conditions. Therefore, morally, we could expect ``a certain pollution effect''. Hence, we decide to plot the best approximation error in order to estimate some kind of pollution error, which could be related to the conditioned stability of the classic Galerkin FEM. 

Note that, unlike Helmholtz, in our model problem \eqref{eq ode} we have two types of error-source frequencies. The first one is the frequency of the equation operator, i.e., $\sqrt{\mu}=\sqrt{1000}$ in our example, which dictates the number of oscillations in the unit time of typical solutions (e.g. homogeneous) of the problem. The second one is the specific frequency of the solution we consider, i.e., $f := \frac{\omega}{2\pi} = \frac{10 \pi}{4}\frac{1}{2 \pi}=\frac{5}{4}$ in our example, which dictates the number of oscillations in the unit time of this specific solution. Only the former is the source of a certain pollution error for this problem, whereas the latter is related to the best approximation error. However, both are related to the choice of the mesh-size in order to get a discrete space whose Galerkin error is ``small''.
\end{oss}

\subsection{Inf-sup tests for piecewise continuous linear finite element method}\label{sec infsup fem}

As noticed in Section \ref{sec errors fem}, the bound \eqref{stab Linfty} seems to be sharp with respect to the error committed by the finite elements: this suggests that there is a uniform (w.r.t. $h$) inf-sup value already for $h < \sqrt{\frac{12}{\mu}}$. Therefore, we make inf-sup tests for the discrete bilinear form of the piecewise continuous linear FEM discretization. The idea is to numerically estimate the discrete inf-sup and visualize its behaviour with respect to $(\mu,h)$: we expect that in the region satisfying $h \geq \sqrt{\frac{12}{\mu}}$ there are infinitesimal inf-sup values, whereas we expect a uniformly (w.r.t. $h$) limited behaviour in the complementary region. \\ \bigskip

\textbf{Numerical estimate of the discrete inf-sup.} Let us define:
\begin{equation}\label{beta dis}
\beta(\mu,h,T):=\inf_{u_h \in S^1_{h;0,*}} \sup_{v_h \in S^1_{h;*,0}} \frac{a(u_h,v_h)}{|u_h|_{H^1(0,T)} |v_h|_{H^1(0,T)}},
\end{equation}
where $a(\cdot,\cdot)$ is the bilinear form defined in \eqref{bil ode}. Let us also define the matrices $\mathbf{A},\mathbf{H}_1,\mathbf{H}_2$ such that 
\begin{gather}
\begin{split}
[\mathbf{A}]_{i,j}:=-\langle \partial_t b_{1,j},\partial_t b_{1,i} \rangle_{L^2(0,T)} &+ \mu \langle  b_{1,j}, b_{1,i} \rangle_{L^2(0,T)}\\ \quad \text{for} \ &i=1,\ldots,M-1, \ j=2,\ldots,M 
\end{split}
\\{[\mathbf{H}_1]}_{i,j}:=\langle \partial_t b_{1,i},\partial_t b_{1,j} \rangle_{L^2(0,T)} \quad \text{for} \ i,j=2,\ldots,M, \\
{[\mathbf{H}_2]}_{i,j}:=\langle \partial_t b_{1,i},\partial_t b_{1,j} \rangle_{L^2(0,T)} \quad \text{for} \ i,j=1,\ldots,M-1,
\end{gather}
where $b_{1,k}$ for $k=1,\ldots,M$ are the classic \textit{hat functions} such that $S^1_h(0,T)=\text{span}\{b_{1,k}\}_{k=1}^M$. Through this matrices we can estimate the discrete inf-sup value \eqref{beta dis}.

\begin{prop}\label{estimate infsup}
	The discrete inf-sup value \eqref{beta dis} satisfies
	\begin{equation*}
	\beta(\mu,h,T)=\sqrt{\lambda_{min}},
	\end{equation*}
	where $\lambda_{min} \geq 0$ is the minimum eigenvalue of the generalised eigenvalue problem:
	\begin{equation*}
	\mathbf{A}^T \mathbf{H}_2^{-1}\mathbf{A}\overset{\rightarrow}{x}=\lambda\mathbf{H}_1 \overset{\rightarrow}{x}, \quad \text{for some} \ \overset{\rightarrow}{x} \in \mathbb{R}^{M-1}.
	\end{equation*} 
\end{prop}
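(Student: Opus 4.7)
The plan is to reduce the inf-sup to a finite-dimensional Rayleigh quotient and invoke the standard variational characterization of the smallest eigenvalue of a generalized eigenvalue problem.

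First I would introduce coordinates. Since $S^1_{h;0,*}=\mathrm{span}\{b_{1,k}\}_{k=2}^M$ and $S^1_{h;*,0}=\mathrm{span}\{b_{1,k}\}_{k=1}^{M-1}$, I would write $u_h=\sum_{j=2}^M x_j b_{1,j}$ and $v_h=\sum_{i=1}^{M-1} y_i b_{1,i}$ and identify them with vectors $\vec{x},\vec{y}\in\mathbb{R}^{M-1}$. By bilinearity of $a(\cdot,\cdot)$ and by the definitions of $\mathbf{A},\mathbf{H}_1,\mathbf{H}_2$, this yields $a(u_h,v_h)=\vec{y}^{\,T}\mathbf{A}\vec{x}$, $|u_h|^2_{H^1(0,T)}=\vec{x}^{\,T}\mathbf{H}_1\vec{x}$ and $|v_h|^2_{H^1(0,T)}=\vec{y}^{\,T}\mathbf{H}_2\vec{y}$. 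I would also note that $\mathbf{H}_1$ and $\mathbf{H}_2$ are symmetric positive definite, since the linear independence of $\{b_{1,k}\}_{k=2}^M$ in $H^1_{0,*}(0,T)$ (respectively of $\{b_{1,k}\}_{k=1}^{M-1}$ in $H^1_{*,0}(0,T)$) together with Poincaré's inequality \eqref{Poinc} makes $|\cdot|_{H^1(0,T)}$ a genuine norm on the corresponding discrete spaces.

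Next I would perform the inner supremum analytically. For fixed $\vec{x}$, the map $\vec{y}\mapsto \vec{y}^{\,T}\mathbf{A}\vec{x}$ is linear, and the standard duality argument (equivalently, the substitution $\vec{z}=\mathbf{H}_2^{1/2}\vec{y}$ followed by Cauchy–Schwarz, with equality attained at $\vec{y}=\mathbf{H}_2^{-1}\mathbf{A}\vec{x}$) gives
\begin{equation*}
\sup_{0\neq \vec{y}\in\mathbb{R}^{M-1}} \frac{\vec{y}^{\,T}\mathbf{A}\vec{x}}{\sqrt{\vec{y}^{\,T}\mathbf{H}_2\vec{y}}}
=\sqrt{\vec{x}^{\,T}\mathbf{A}^{T}\mathbf{H}_2^{-1}\mathbf{A}\vec{x}}.
\end{equation*}
Consequently,
\begin{equation*}
\beta(\mu,h,T)^{2}
=\inf_{0\neq\vec{x}\in\mathbb{R}^{M-1}}
\frac{\vec{x}^{\,T}\mathbf{A}^{T}\mathbf{H}_2^{-1}\mathbf{A}\vec{x}}{\vec{x}^{\,T}\mathbf{H}_1\vec{x}}.
\end{equation*}

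Finally I would recognize the right-hand side as a generalized Rayleigh quotient. The matrix $\mathbf{A}^T\mathbf{H}_2^{-1}\mathbf{A}$ is symmetric positive semidefinite (since $\mathbf{H}_2^{-1}$ is symmetric positive definite), and $\mathbf{H}_1$ is symmetric positive definite. Hence the Courant–Fischer variational characterization applied to the generalized eigenvalue problem $\mathbf{A}^{T}\mathbf{H}_2^{-1}\mathbf{A}\vec{x}=\lambda\mathbf{H}_1\vec{x}$ gives $\inf_{\vec{x}\neq 0}(\vec{x}^{\,T}\mathbf{A}^{T}\mathbf{H}_2^{-1}\mathbf{A}\vec{x})/(\vec{x}^{\,T}\mathbf{H}_1\vec{x})=\lambda_{\min}\geq 0$. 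Taking square roots yields $\beta(\mu,h,T)=\sqrt{\lambda_{\min}}$.

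The calculation is essentially routine; the only nontrivial step is the inner supremum formula, which one has to carry out either via the substitution trick above or by maximizing the linear functional $\vec{y}\mapsto\vec{y}^T\mathbf{A}\vec{x}$ over the ellipsoid $\{\vec{y}:\vec{y}^T\mathbf{H}_2\vec{y}\leq 1\}$ using Lagrange multipliers. A minor bookkeeping point to watch is that $\mathbf{A}$ is \emph{not} square in general (here it is, because $\dim S^1_{h;0,*}=\dim S^1_{h;*,0}=M-1$), but the argument is written so that only $\mathbf{A}^T\mathbf{H}_2^{-1}\mathbf{A}$ and $\mathbf{H}_1$ intervene and they have the same size, so the generalized eigenvalue problem is well posed.
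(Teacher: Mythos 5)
Your proposal is correct and follows essentially the same route as the paper: pass to coordinates, compute the inner supremum via the substitution $\vec z=\mathbf{H}_2^{1/2}\vec y$ to get $\sqrt{\vec x^{\,T}\mathbf{A}^T\mathbf{H}_2^{-1}\mathbf{A}\vec x}$, and identify the remaining infimum as the smallest eigenvalue of the generalized eigenproblem. The only (immaterial) difference is that you invoke Courant--Fischer for the generalized problem directly, whereas the paper performs the additional congruence $\tilde u=\mathbf{H}_1^{1/2}\vec u$ to reduce to a standard symmetric eigenvalue problem first.
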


\begin{proof}
	Let $u_h \in S^1_{h;0,*}$ and $v_h \in S^1_{h;*,0}$. They can be represented as
	\begin{gather*}
	u_h=\sum_{j=2}^M u_j b_{1,j},\\
	v_h=\sum_{i=1}^{M-1} v_i b_{1,i}.
	\end{gather*}
	Let us define $\overset{\rightarrow}{u}:=(u_2,\ldots,u_M)^T \in \mathbb{R}^{M-1}$, $\overset{\rightarrow}{v}:=(v_1,\ldots,v_{M-1})^T \in \mathbb{R}^{M-1}$. Therefore,
    \begin{equation}\label{vett}
    \begin{cases}
    a(u_h,v_h)=\overset{\rightarrow}{u}^T\mathbf{A}^T \overset{\rightarrow}{v},\\
    |u_h|^2_{H^1(0,T)}=\overset{\rightarrow}{u}^T\mathbf{H}_1 \overset{\rightarrow}{u},\\
    |v_h|^2_{H^1(0,T)}=\overset{\rightarrow}{v}^T\mathbf{H}_2 \overset{\rightarrow}{v}.
    \end{cases}
    \end{equation}
    In order to lighten the notation we will denote $\overset{\rightarrow}{u}$ with $u$ and $\overset{\rightarrow}{v}$ with $v$. By \eqref{vett} the following equality holds
    \begin{equation*}
    \inf_{u_h \in S^1_{h;0,*}} \sup_{v_h \in S^1_{h;*,0}} \frac{a(u_h,v_h)}{|u_h|_{H^1(0,T)} |v_h|_{H^1(0,T)}}= \inf_{u \in \mathbb{R}^{M-1}} \sup_{v \in \mathbb{R}^{M-1}}\frac{{u}^T\mathbf{A}^T {v}}{\sqrt{u^T\mathbf{H}_1 {u}} \sqrt{{v}^T\mathbf{H}_2 {v}}}.
    \end{equation*}
    $\mathbf{H}_2$ is a symmetric real positive-definite matrix, thus $\mathbf{H}_2=\mathbf{H}_2^\frac{1}{2}\mathbf{H}_2^\frac{1}{2}$. With the change of variable $\tilde{v}:=\mathbf{H}_2^\frac{1}{2}v$, the following equalities hold:
    \begin{equation*}
    \sup_{v \in \mathbb{R}^{M-1}}\frac{{u}^T\mathbf{A}^T {v}}{ \sqrt{{v}^T\mathbf{H}_2 {v}}}=\sup_{\tilde{v} \in \mathbb{R}^{M-1}}\frac{{u}^T\mathbf{A}^T \mathbf{H}_2^{-\frac{1}{2}} \tilde{v}}{ \sqrt{\tilde{v}^T \tilde{v}}} = \Big\|\mathbf{H}_2^{-\frac{1}{2}} \mathbf{A} u\Big\|_2,
    \end{equation*}
    where $\|\cdot\|_2$ denote the euclidean norm of a vector. 
    
    $\mathbf{H}_1$ is a symmetric real positive-definite matrix, thus $\mathbf{H}_1=\mathbf{H}_1^\frac{1}{2}\mathbf{H}_1^\frac{1}{2}$. With the change of variable $\tilde{u}:=\mathbf{H}_1^\frac{1}{2}u$, the following equalities hold:
    \begin{equation*}
    \begin{split}
    \Big\|\mathbf{H}_2^{-\frac{1}{2}} \mathbf{A} u\Big\|_2^2&=u^T\mathbf{A}^T\mathbf{H}_2^{-\frac{1}{2}}\mathbf{H}_2^{-\frac{1}{2}} \mathbf{A} u=u^T\mathbf{A}^T\mathbf{H}_2^{-1} \mathbf{A} u\\
    &=\tilde{u}^T\mathbf{H}_1^{-\frac{1}{2}}\mathbf{A}^T\mathbf{H}_2^{-1} \mathbf{A} \mathbf{H}_1^{-\frac{1}{2}} \tilde{u}.
    \end{split}
    \end{equation*}
    Therefore,
    \begin{equation*}
    \begin{split}
    \beta(\mu,h,T)&=\inf_{\tilde{u} \in \mathbb{R}^{M-1}}\sqrt{\frac{\tilde{u}^T\mathbf{H}_1^{-\frac{1}{2}}\mathbf{A}^T\mathbf{H}_2^{-1} \mathbf{A} \mathbf{H}_1^{-\frac{1}{2}} \tilde{u}}{\tilde{u}^T\tilde{u}}}\\
    &=\sqrt{\inf_{\tilde{u} \in \mathbb{R}^{M-1}} \frac{\tilde{u}^T\mathbf{H}_1^{-\frac{1}{2}}\mathbf{A}^T\mathbf{H}_2^{-1} \mathbf{A} \mathbf{H}_1^{-\frac{1}{2}} \tilde{u}}{\tilde{u}^T\tilde{u}}}\\
    &=\sqrt{\lambda_{min}},
    \end{split}
    \end{equation*}
    where $\lambda_{min}$ is the minimum eigenvalue of the symmetric real positive-semide\-fi\-nite matrix $\mathbf{H}_1^{-\frac{1}{2}}\mathbf{A}^T\mathbf{H}_2^{-1} \mathbf{A} \mathbf{H}_1^{-\frac{1}{2}}$. This means that $\lambda_{min}$ is the smallest number that satisfies
    \begin{equation*}
    \mathbf{H}_1^{-\frac{1}{2}}\mathbf{A}^T\mathbf{H}_2^{-1} \mathbf{A} \mathbf{H}_1^{-\frac{1}{2}} \overset{\rightarrow}{x} = \lambda \overset{\rightarrow}{x}, \quad \text{for some} \ \overset{\rightarrow}{x} \in \mathbb{R}^{M-1},
    \end{equation*}
    i.e.,
    \begin{equation*}
    \mathbf{A}^T\mathbf{H}_2^{-1} \mathbf{A} \mathbf{H}_1^{-\frac{1}{2}} \overset{\rightarrow}{x} = \lambda \mathbf{H}_1^{\frac{1}{2}} \overset{\rightarrow}{x}, \quad \text{for some} \ \overset{\rightarrow}{x} \in \mathbb{R}^{M-1}.
    \end{equation*}
    In order to lighten the notation we will denote $\overset{\rightarrow}{x}$ with $x$. With the change of variable $\tilde{x}:=\mathbf{H}_1^{-\frac{1}{2}} x$, $\lambda_{min}$ is the minimum of 
    \begin{equation*}
    \mathbf{A}^T\mathbf{H}_2^{-1} \mathbf{A} \tilde{x}= \lambda \mathbf{H}_1 \tilde{x}, \quad \text{for some} \ \tilde{x} \in \mathbb{R}^{M-1}.
    \end{equation*}
     The thesis is therefore proven.
\end{proof}

\vspace{0.8cm}

We fix the final time $T=10$ and a uniform mesh. We numerically study the behaviour of $\beta(\mu,h)$ of \eqref{beta dis} with respect to $(\mu,h)$ by means of a p-colour plot of $\log(\beta)$ depending on $(\log(\mu),\log(h))$, so as to visualize the development of $\beta(\mu,h)$ more effectively. 

\begin{figure}[h!]
	\centering
	\includegraphics[scale=0.5]{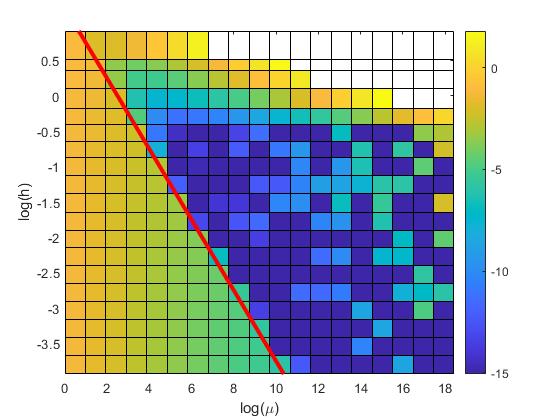}
	\caption{A p-color plot of $\log(\beta)$ of piecewise continuous linear FEM with respect to $(\log(\mu),log(h))$. The red line is the natural logarithm of the upper bound in \eqref{stab Linfty}.}
	\label{fig:beta_fem}
\end{figure}

Firstly, let us note that the MATLAB function \textit{p-color} sets by default the MATLAB values \textit{-Inf} (i.e., numbers whose absolute value is too large to be represented as conventional floating-point values) to dark blue: we have checked this numerically and it is also clarified by Figure \ref{fig:3D fem}, where the empty regions correspond to $log(\beta)=-Inf$.

\begin{figure}[h!]
	\centering
	\includegraphics[scale=0.3]{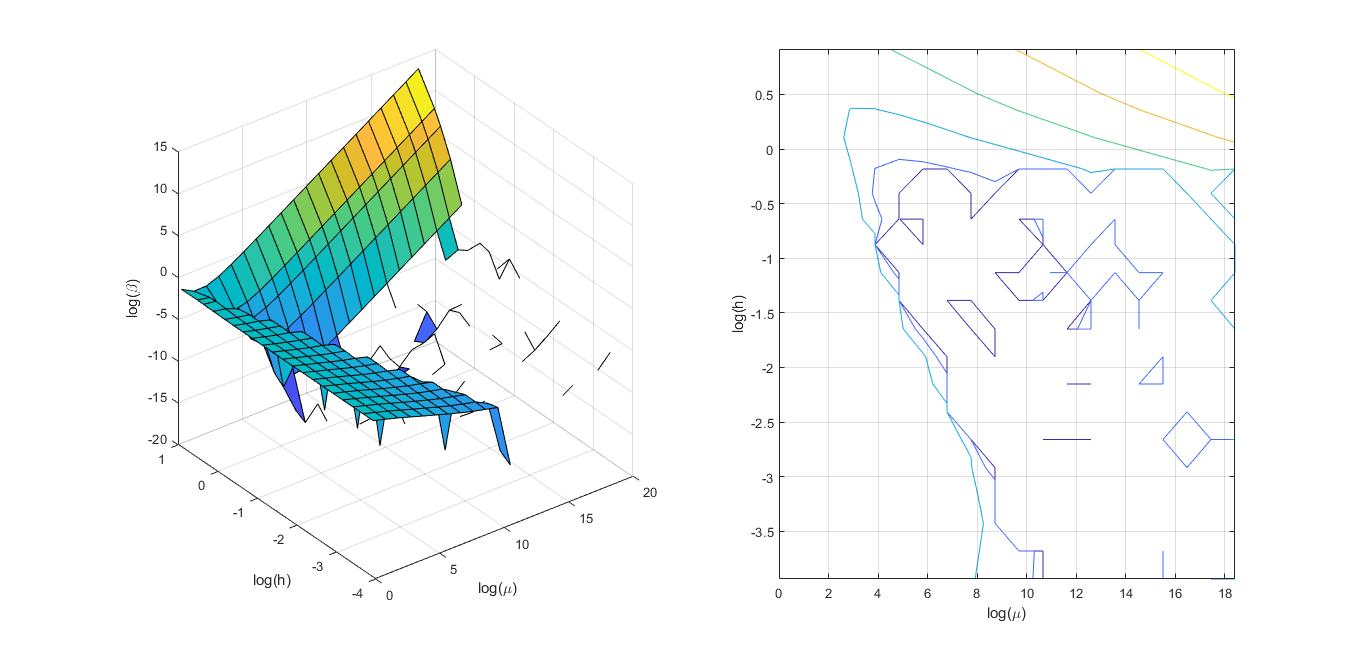}
	\caption{On the left, a three-dimensional plot of $\log(\beta)$ of piecewise continuous linear FEM with respect to $(\log(\mu),\log(h))$. On the right, the contour lines of $\log(\beta)$ with respect to $(\log(\mu),\log(h))$.}
	\label{fig:3D fem}
\end{figure}

The red line of Figure \ref{fig:beta_fem} is the natural logarithm of the upper bound in \eqref{stab Linfty}. We can clearly see in Figure \ref{fig:beta_fem} that it is the separation margin of two regions in which we observe a significantly different behaviour of $\beta(\mu,h)$. In the region below the red line, i.e., for $h < \sqrt{\frac{12}{\mu}}$, we observe a uniformly (w.r.t. $h$) bounded $\beta(\mu,h)$, whereas in the region above the red line, i.e., for $h > \sqrt{ \frac{12}{\mu}}$, we observe a predominantly infinitesimal $\beta(\mu,h)$. This behaviour of the discrete inf-sup is indeed what we expect from Figure \ref{fig:err fem}, in which the bound \eqref{stab Linfty} on $h$ seemed to be sharp, suggesting to us a uniformly (w.r.t. $h$) bounded discrete inf-sup already for $h < \sqrt{\frac{12}{\mu}}$. In the stability region, we can see a dependency of the discrete inf-sup on $\mu$ of order $\mu^{-\frac{1}{2}}$, as noticed in (\cite{Zank2020}).

Note that in Figure \ref{fig:beta_fem} we do not consider the values of discrete inf-sup in the upper-right white region. In this region, the values of $\log(\beta)$ are greater than $10^2$. Actually, as a consequence of $h$ ``coarse'' and ``high'' wave-number, such large values for the discrete inf-sup are natural results, since value \eqref{beta dis} is directly proportional to $\mu$ for wave-number and mesh-size that are ``very large''. This is due to inverse inequalities which allow the term $L^2$ to dominate the derivatives. We are not interested in working under these conditions, since, for those wave numbers, $h$ is too coarse compared to the resolution we expect to need in order to obtain satisfactory numerical results. Therefore, we do not visualize the corresponding inf-sup values.

\subsection{Errors committed by quadratic isogeometric discretization with maximal regularity}\label{sec errs iga}

In this Section we show some numerical results that we obtain by testing our theoretical considerations of Section \ref{sec iga}. 

As in Section \ref{sec errors fem} and in (\cite{Steinbach2019, Zank2020}), as a numerical example for the Galerkin-Petrov quadratic isogeometric discretization with maximal regularity, we consider a uniform discretization of the time interval $(0,T)$ with $T = 10$ and a mesh-size $h = T/N$. For $\mu = 1000$ we consider the strong solution $u(t) = \sin^2\Big(\frac{5}{4}\pi t\Big)$
and we compute the integrals appearing at the right-hand side using high-order integration rules. 

As in Section \ref{sec errors fem} we consider approximation errors since they are a reflection of instability. 

The minimum number of elements chosen is $N = 4$, the maximum number is $N=4096$ (it is not $N = 32768$ due to the memory limits of the laptop used).

\begin{figure}[h!]	
	\centering
	\includegraphics[scale=0.3]{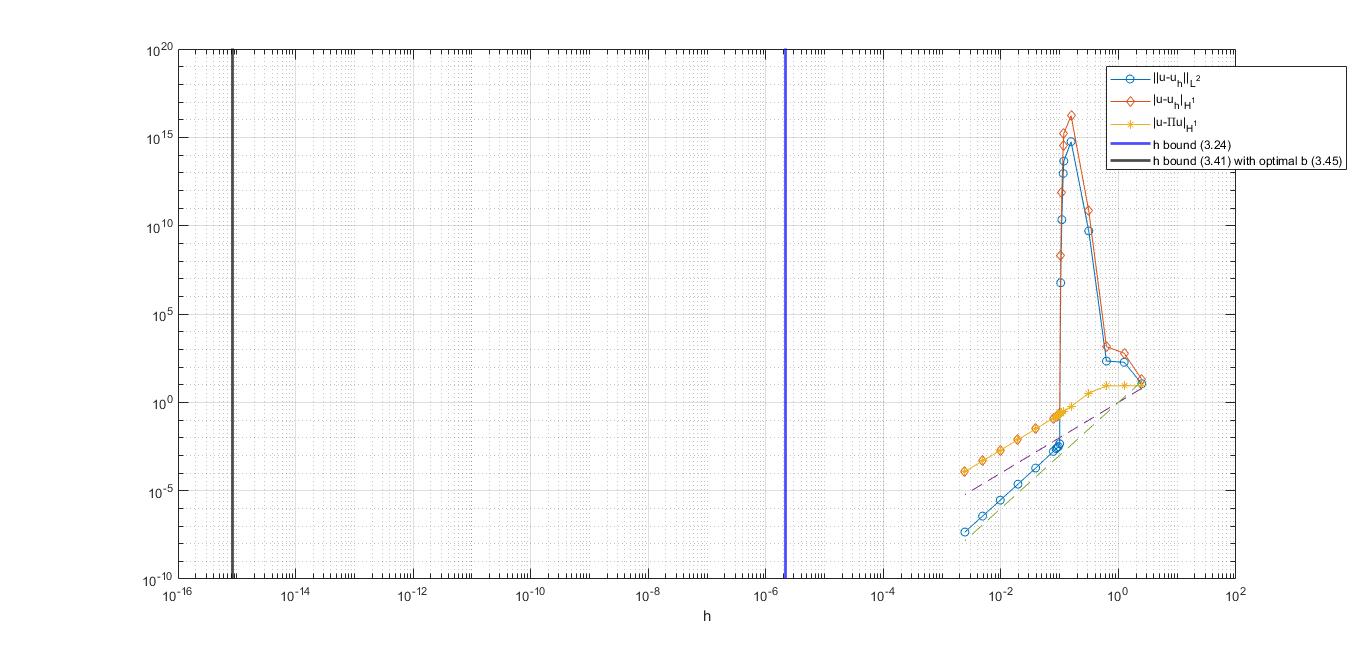}
	\caption{A $\log$-$\log$ plot of errors committed by quadratic IGA, with maximal regularity, in $|\cdot|_{H^1(0,T)}$ seminorm and in $\|\cdot\|_{L^2(0,T)}$ norm, with respect to a uniform mesh-size $h$. Also, the best approximation error in $|\cdot|_{H^1(0,T)}$ seminorm and bounds \eqref{h bound}, \eqref{h bound gard} (with the optimal choice \eqref{b}) are represented. The square of the wave number is $\mu=1000$ and the final time is $T=10$.}
	\label{fig:err iga}
\end{figure}

\begin{figure}[h!]	
	\centering
	\includegraphics[scale=0.3]{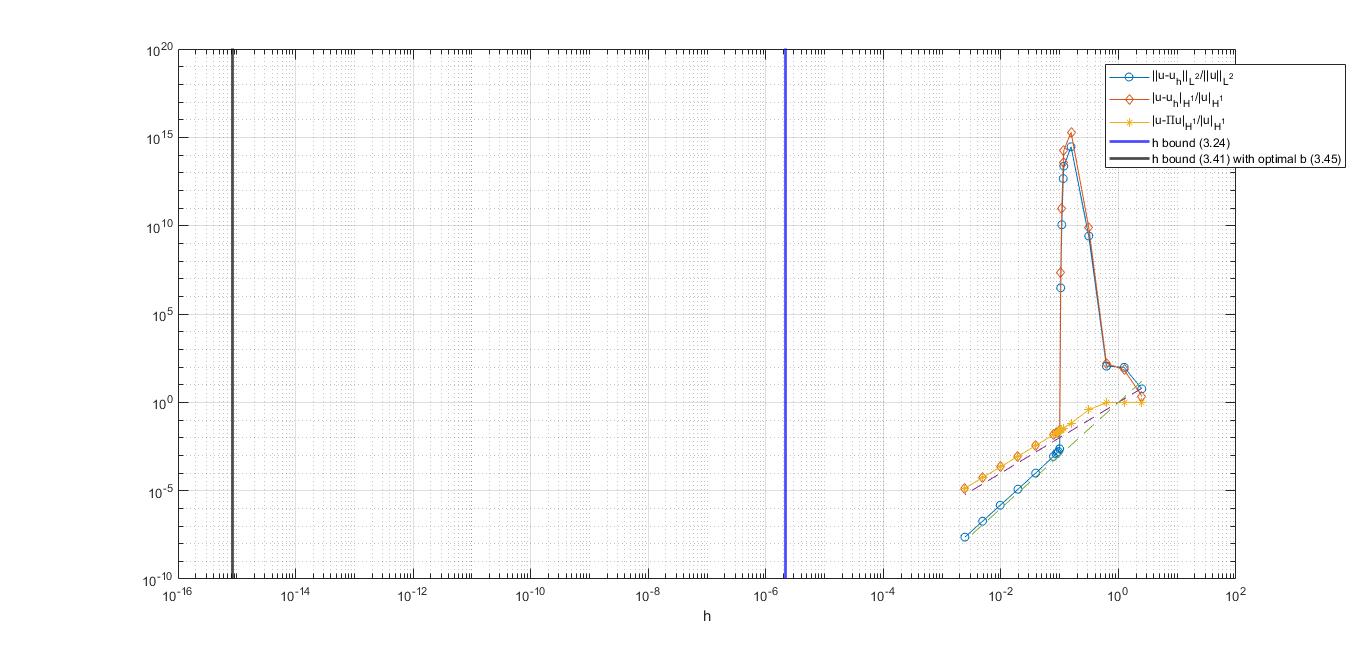}
	\caption{A $\log$-$\log$ plot of relative errors committed by quadratic IGA, with maximal regularity, in $|\cdot|_{H^1(0,T)}$ seminorm and in $\|\cdot\|_{L^2(0,T)}$ norm, with respect to a uniform mesh-size $h$. Also, the best approximation error in $|\cdot|_{H^1(0,T)}$ seminorm and bounds \eqref{h bound}, \eqref{h bound gard} (with the optimal choice \eqref{b}) are represented. The square of the wave number is $\mu=1000$ and the final time is $T=10$.}
	\label{}
\end{figure}

From Figure \ref{fig:err iga}, you can see that the maximal error occurs at $N = 64$: it is of order of $10^{16}$ in seminorm $|\cdot|_{H^1(0,T)}$ and $10^{15}$ in norm $\|\cdot\|_{L^2(0,T)}$. Note that the maximal error, and more generally for all values of $h$, is strictly smaller than the error committed by piecewise continuous linear FEM discretization, see Section \ref{sec errors fem}. This result can be interpreted as a consequence of the good approximation properties of B-spline technology (\cite{HUGHES20084104, n-width}).

\begin{figure}
	\hspace{-1cm}
	\begin{minipage}[h!]{8.5cm}
		\centering
		\includegraphics[width=6.5cm]{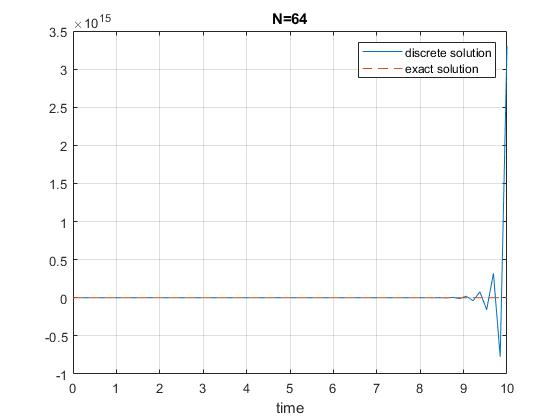}
		\caption{Exact and discrete solutions by quadratic IGA for $\mu = 1000$ and $N = 64$ elements (i.e., $h = 0.1563$).}
	\end{minipage} 
	\hspace{-1cm}
	\begin{minipage}[h!]{9cm}
		\centering
		\includegraphics[width=6.5cm]{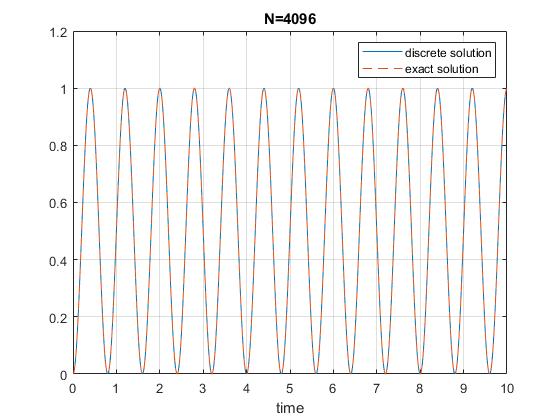}
		\caption{Exact and discrete solutions by quadratic IGA for $\mu = 1000$ and $N=4096$ elements (i.e., $h=0.0024$).}
	\end{minipage}
\end{figure}

As you can see in Figure \ref{fig:err iga}, there is convergence only for sufficiently small mesh-size $h$.  In particular, convergence is quadratic in $|\cdot|_{H^1(0,T)}$ seminorm, as we expect from Section \ref{sec iga}, and it is cubic in $\|\cdot\|_{L^2(0,T)}$ norm. Note that in our theoretical discussion of Section \ref{sec iga} we did not estimate the order of convergence in $\|\cdot\|_{L^2(0,T)}$ norm, since it was beyond our interest. However, using Aubin-Nitsche's trick, one can verify that the $L^2$-error converges cubically if the exact solution $u$ satisfies $u \in H^3(0,T)$. 

Clearly, both bounds \eqref{h bound}, \eqref{h bound gard} (with the optimal choice \eqref{b}) are suboptimal, since a beginning of convergence is observed for $h$ much larger than these thresholds, see Figure \ref{fig:err iga}. Moreover, the constraint \eqref{h bound gard} is more stringent than \eqref{h bound}, as we expect to happen asymptotically (i.e., $\mu \rightarrow \infty$) from Remark \ref{oss asympt}. Thus, it remains open to improve assumptions \eqref{h bound}, \eqref{h bound gard} to ensure stability conditions of \eqref{stab zank}, \eqref{stab iga gard} type, respectively. That is, we are interested in an upper bound on the mesh-size that provides us with stability and is sharp. 

\subsection{Inf-sup tests for quadratic isogeometric discretization with maximal regularity}

In this Section we numerically estimate the discrete inf-sup value and visualize its behaviour with respect to $(\mu,h)$, with a uniform mesh-size $h$ and final instant $T=10$. Since the development of the error committed by the quadratic IGA is similar to that of the linear FEM, we expect the behaviour of the discrete inf-sup to be similar as well. In particular, if we numerically study $\beta(\mu,h)$ of \eqref{beta dis} with respect to $(\mu,h)$ by means of a p-colour plot of $\log(\beta)$ depending on $(\log(\mu),\log(h))$, we expect to visualize a line that is the separation margin of stability and instability regions. This line corresponds to the sharp bound on $h$ that, if satisfied, ensures the desired stability.

\begin{figure}[h!]
	\centering
	\includegraphics[scale=0.5]{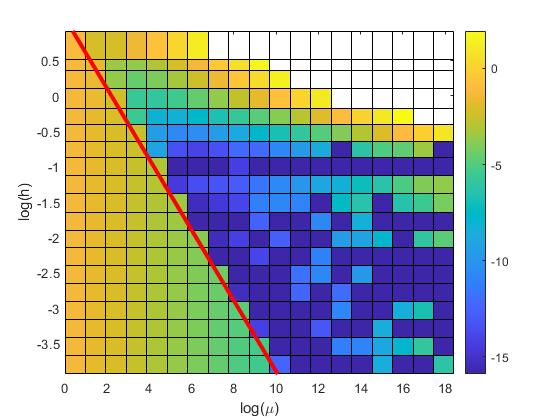}
	\caption{A p-color plot of $\log(\beta)$ of quadratic IGA with maximal regularity with respect to $(\log(\mu),log(h))$. The red line represents $\log(h)=\frac{1}{2}\log(9)-\frac{1}{2}\log(\mu)$.}
	\label{fig:beta_iga}
\end{figure}

\begin{figure}[h!]
	\centering
	\includegraphics[scale=0.3]{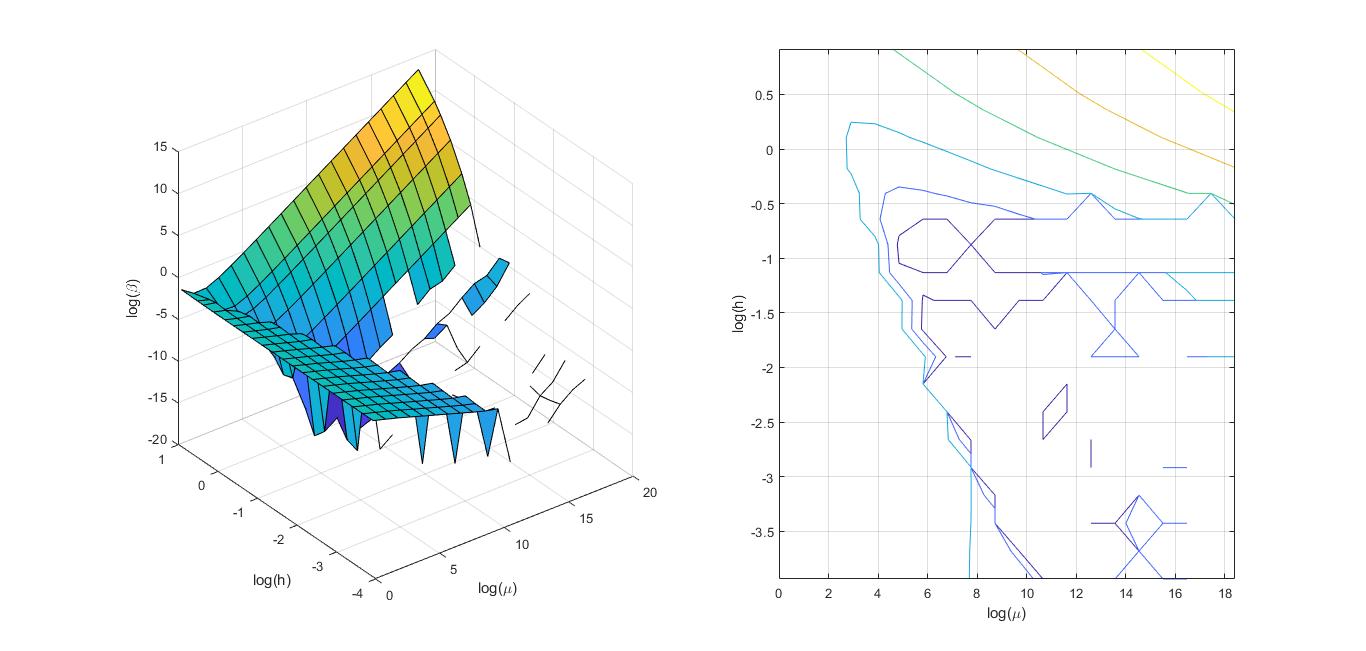}
	\caption{On the left, a three-dimensional plot of $\log(\beta)$ of quadratic IGA with maximal regularity with respect to $(\log(\mu),\log(h))$. On the right, the contour lines of $\log(\beta)$ with respect to $(\log(\mu),\log(h))$.}
	\label{fig:3D iga}
\end{figure}

\bigskip
We estimate the isogeometric discrete inf-sup using Proposition \ref{estimate infsup} with the discrete solution and test spaces defined in \eqref{iga space}, \eqref{iga test space}, respectively. 

The numerical results in Figure \ref{fig:beta_iga} and Figure \ref{fig:3D iga} confirm what we expect. The red line of Figure \ref{fig:beta_iga} corresponds to the constraint
\begin{equation}\label{emp constraint}
h < \sqrt{\frac{9}{\mu}}.
\end{equation}
Indeed, in the region below the red line, i.e., for $h < \sqrt{\frac{9}{\mu}}$, we observe a uniformly (w.r.t. $h$) bounded $\beta(\mu,h)$, whereas in the region above the red line, i.e., for $h > \sqrt{\frac{9}{\mu}}$, we observe a predominantly infinitesimal $\beta(\mu,h)$. In the stability region, we can see a dependency of the discrete inf-sup on $\mu$ of order $\mu^{-1/2}$, as in the continuous linear FEM case of Section \ref{sec infsup fem}. %which guarantees a slower decrease of the discrete quadratic IGA inf-sup (w.r.t. $\mu \rightarrow \infty$) than that of the FEM case observed in Section \ref{sec infsup fem}.

In Figure \ref{fig:beta_iga} we do not consider the values of discrete inf-sup that are greater than $10^2$. Indeed these values are not a physical phenomenon, but simply the result of an unsuitable discretization of the problem, as observed in Section \ref{sec infsup fem}. 

\begin{figure}[h!]	
	\centering
	\includegraphics[scale=0.3]{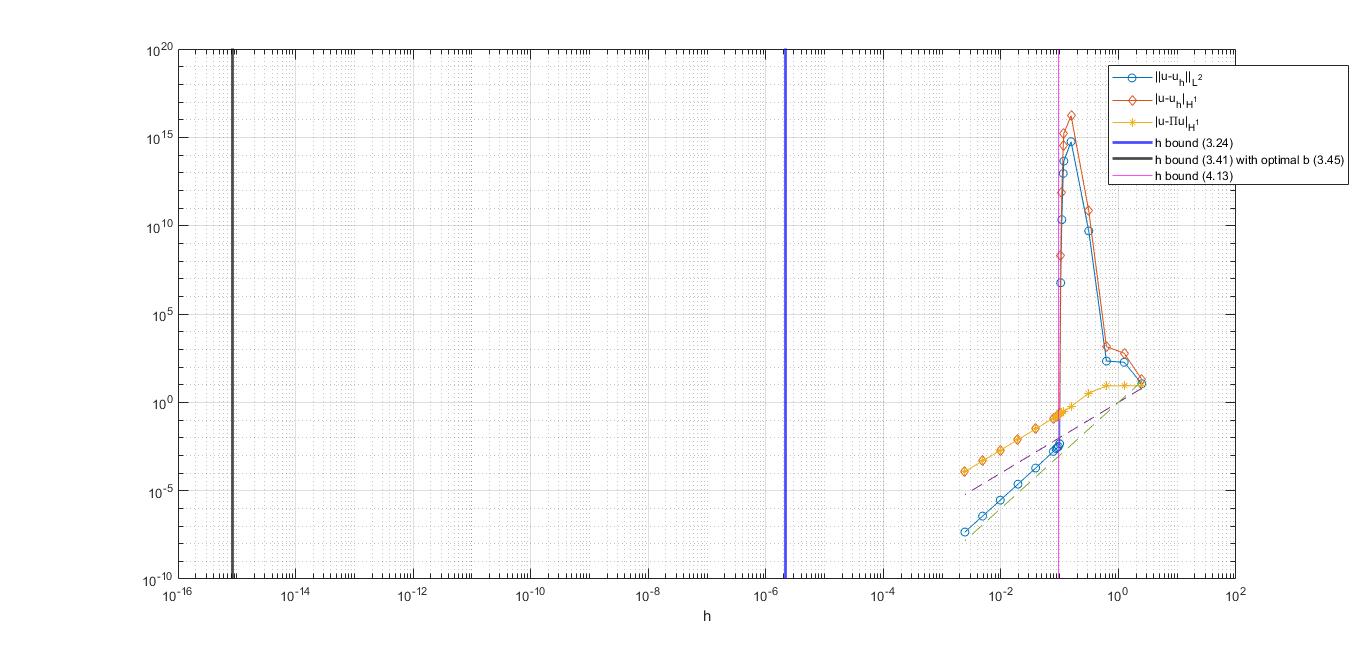}
	\caption{A $\log$-$\log$ plot of errors committed by quadratic IGA, with maximal regularity, in $|\cdot|_{H^1(0,T)}$ seminorm and in $\|\cdot\|_{L^2(0,T)}$ norm, with respect to a uniform mesh-size $h$. Also, the best approximation error in $|\cdot|_{H^1(0,T)}$ seminorm and bounds \eqref{h bound}, \eqref{h bound gard} (with the optimal choice \eqref{b}), \eqref{emp constraint} are represented. The square of the wave number is $\mu=1000$ and the final time is $T=10$.}
	\label{fig:err iga con stima empirica}
\end{figure}
 
In Figure \ref{fig:err iga con stima empirica} one can note the sharpness of the constraint \eqref{emp constraint} with respect to the error committed by the quadratic IGA with maximal regularity. This result is what we actually expect, as a consequence of the inf-sup stability for $h < \sqrt{\frac{9}{\mu}}$.\\ \bigskip

Another interesting numerical test displays, in the stability region given by \eqref{emp constraint}, the numerically estimated isogeometric inf-sup with respect to the mesh-size $h$ at a fixed $\mu$, see Figure \ref{fig: sez infsup depend on h}. This is useful to further see that the discrete inf-sup is actually uniformly limited in the region given by the constraint \eqref{emp constraint}.

\begin{figure}[h!]	
	\centering
	\includegraphics[scale=0.4]{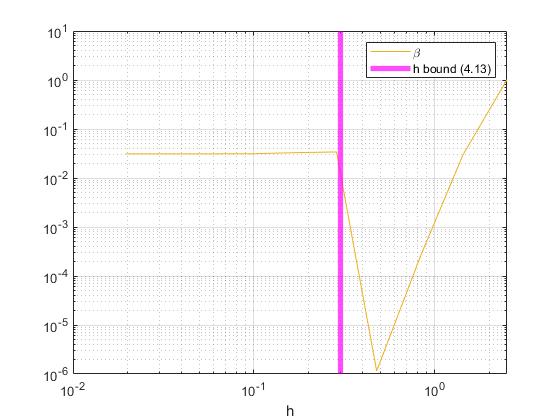}
	\caption{A $\log$-$\log$ plot of the isogeometric inf-sup value with respect to a uniform mesh-size $h$, for a fixed $\mu = 100$.}
	\label{fig: sez infsup depend on h}
\end{figure}

Comparing the theoretical inf-sup estimates \eqref{infsup zank}, \eqref{stab iga gard} of Section \ref{sec iga} with the numerical inf-sup could be useful in order to understand how accurate the theoretical bounds are. Therefore, in Figure \ref{fig: sez infsup depend on mu} we visualize the three stability bounds \eqref{emp constraint}, \eqref{h bound}, \eqref{h bound gard} and the relative inf-sup estimates at fixed $h$. It is sufficient to fix $h$ and compare them as functions of the single variable $\mu$, since in the stability region their mesh-size dependence disappears.
In order to represent these three constraints on the mesh-size with respect to $\mu$, we approximate \eqref{h bound}, \eqref{h bound gard} as in Remark \ref{oss asympt}, i.e., 
\begin{gather*}
h < \mu^{-3/2},\\
h < \mu^{-7/2},
\end{gather*}
respectively, which result in
\begin{gather*}
\mu < h^{-2/3},\\
\mu < h^{-2/7}.
\end{gather*}

Since the approximation of \eqref{h bound gard} with $h < \mu^{-7/2}$ corresponds to the slightly suboptimal value $b = \mu T^2$, plotting the inf-sup estimate of Theorem \ref{theo gard iga} with this choice is natural. %However, we could have plotted the inf-sup of Theorem \eqref{h bound gard} also for the optimal value \eqref{b}, since its related stability is guaranteed for every $h$ satisfying the constraint \eqref{h bound gard} with the choice \eqref{b}, and thus for every $h$ satisfying the more stringent constraint. 

As one can note in Figure \ref{fig: sez infsup depend on mu}, the inf-sup estimate of Theorem \ref{theo gard iga} with the choice $b = \mu T^2$ decreases more slowly for $\mu \rightarrow \infty$ than the inf-sup \eqref{infsup zank}, as we expect from Remark \ref{oss asympt}. Moreover, the former is more accurate than the latter, being larger at the same $h$ and $\mu$ for which they are comparable.

\begin{figure}[h!]	
	\centering
	\includegraphics[scale=0.5]{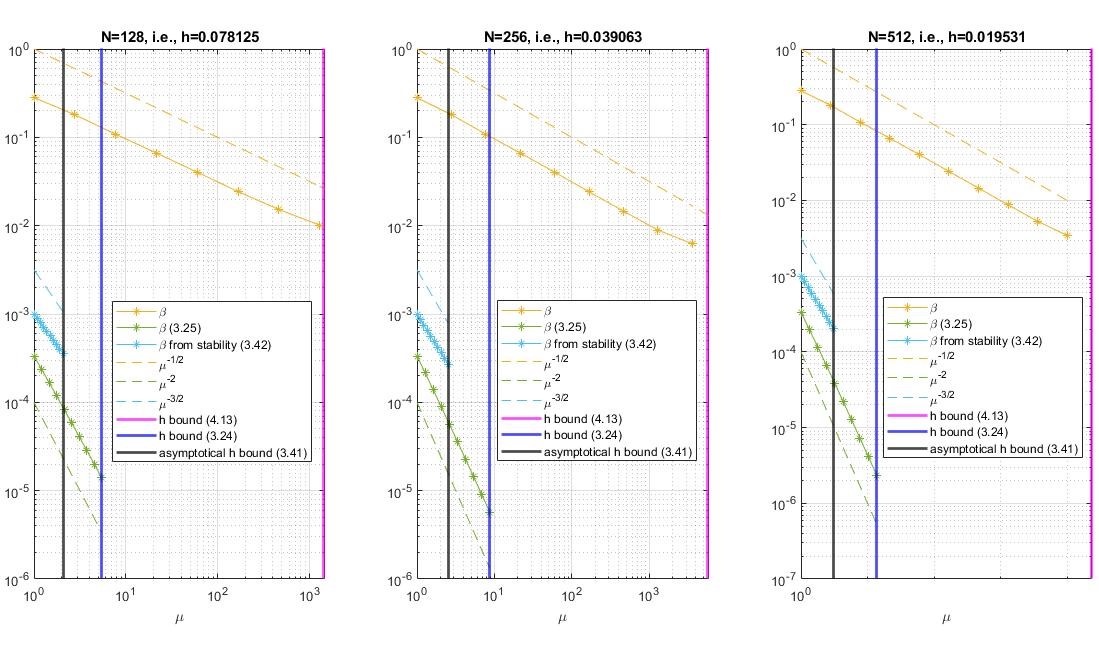}
	\caption{Three $\log$-$\log$ plots of the isogeometric inf-sup value and of its lower-bounds with respect to $\mu$ for a fixed uniform mesh-sizes $h$.}
	\label{fig: sez infsup depend on mu}
\end{figure}

\section{Unconditional stability}
\subsection{Stabilization of piecewise continuous linear finite element method}\label{stab fem}
In (\cite{Steinbach2019, Zank2020}) the authors introduce a stabilized piecewise continuous linear finite element discretization of \eqref{eq ode} in order to overcome the mesh constraints \eqref{zank più sharp}, \eqref{stab Linfty}. In this Section we briefly recall their main results about this stabilization and we show some numerical experiments that we make to test their theoretical considerations. 

O. Steinbach and M. Zank define a new discrete bilinear form \\ $a_h: S^1_{h;0,*} \times S^1_{h;*,0} \rightarrow \mathbb{R}$ such that
\begin{equation}\label{ah fem}
a_h(u_h,v_h):=-\langle \partial_t u_h, \partial_t v_h \rangle _{L^2(0,T)} + \mu \langle u_h, Q_h^0 v_h \rangle_{L^2(0,T)},
\end{equation}
for all $u_h\in S^1_{h;0,*}$, $v_h \in S^1_{h;*,0}$, where $Q^0_h: L^2(0,T) \rightarrow S^0_h(0, T)$ denotes the $L^2$ orthogonal projection on the piecewise constant finite element space $S^0_h(0, T )$. So basically, they do an under-integration in the $L^2(0,T)$ norm. That is, instead of integrating exactly the $L^2(0,T)$ scalar product $\langle u_h, v_h \rangle_{L^2(0,T)}$, they approximate it by projecting $v_h$ on the piecewise constant finite element space $S^0_h(0, T )$.

As a consequence of the properties of the projection operator $Q^0_h$ and of the piecewise linear nodal interpolation operator $I_h: C[0,T] \rightarrow S^1_h(0,T)$ (Lemma $17.1$, Lemma $17.2$, Lemma $17.3$, Lemma $17.4$, Lemma $17.5$ of \cite{Steinbach2019}), the new bilinear form \eqref{ah fem} satisfies the uniform (w.r.t. $h$) inf-sup condition
\begin{equation}\label{infsup perturbata}
\frac{1}{1+\sqrt{2}\mu T^2} |u_h|_{H^1(0,T)} \leq \sup_{0 \neq v_h \in S^1_{h;*,0}} \frac{a_h(u_h, v_h)}{|v_h|_{H^1(0,T)}},
\end{equation}
see Lemma $17.6$ of (\cite{Steinbach2019}). 

Therefore, O. Steinbach and M. Zank consider the following perturbed problem
\begin{equation}\label{perturbed problem}
\begin{cases}
\text{Find} \ u_h \in S^1_{h;0,*} \quad \text{such that}\\
a_h(u_h,v_h)=\langle f,v_h \rangle_{(0,T)} \quad  \forall v_h \in  S^1_{h;*,0},
\end{cases}
\end{equation}
where $T>0$ and $f \in [H^1_{*,0}(0,T)]' $ are given, and the notation $\langle \cdot,\cdot \rangle_{(0,T)}$ denotes the duality pairing as extension of the inner product in $L^2(0,T)$. Thanks to \eqref{infsup perturbata} estimate, problem \eqref{perturbed problem} is well-posed and the following stability condition holds
\begin{equation*}
|\tilde{u}_h|_{H^1(0,T)} \leq \big(1+\sqrt{2}\mu T^2\big) \|f\|_{[H^1_{*,0}(0,T)]'},
\end{equation*}
 where $\tilde{u}_h \in S^1_{h;0,*} $ is the unique solution related to the fixed right-hand side $f \in [H^1_{*,0}(0,T)]'$.

Using an alternative representation (Corollary 17.1 of \cite{Steinbach2019}) of the bilinear form $a(\cdot,\cdot)$ defined in \eqref{bil ode} and some standard techniques, such as Galerkin orthogonality and interpolation error estimates, also linear convergence in $|\cdot|_{H^1(0,T)}$ of the discrete solution $\tilde{u}_h$ of \eqref{perturbed problem} to the solution $u \in H^1_{0,*}(0,T)$ of \eqref{var ode} are proven, if $u \in H^2(0,T)$; see Theorem 17.1 of (\cite{Steinbach2019}) for more details. Furthermore, quadratic convergence in $\|\cdot\|_{L^2(0,T)}$ is shown, if the exact solution satisfies $u \in H^2(0,T)$; see Theorem 4.2.21 of (\cite{Zank2020}). \\ \bigskip

We fix the final time $T=10$, and we test their stabilization by numerically estimating the discrete inf-sup of the bilinear form \eqref{infsup perturbata} by means of Proposition \ref{estimate infsup} with uniform mesh.

\begin{figure}[h!]
	\centering
	\includegraphics[scale=0.45]{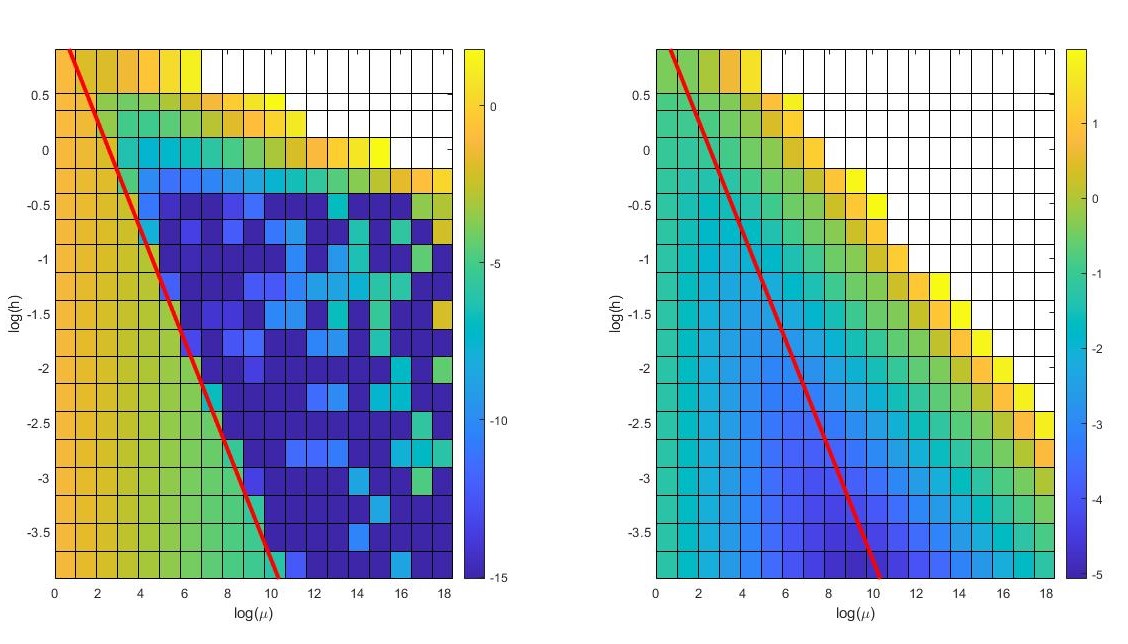}
	\caption{On the left a p-color plot of $\log(\beta)$ with respect to $(\log(\mu),log(h))$, where $\beta$ is the discrete inf-sup value of the bilinear form \eqref{bil ode}. On the right a p-color plot of $\log(\beta)$ with respect to $(\log(\mu),log(h))$, where $\beta$ is the inf-sup value of the bilinear form \eqref{ah fem}. In both Figures the red line is the natural logarithm of the upper bound \eqref{stab Linfty}.}
	\label{fig: beta stab fem}
\end{figure}

\begin{figure}[h!]
	\centering
	\includegraphics[scale=0.45]{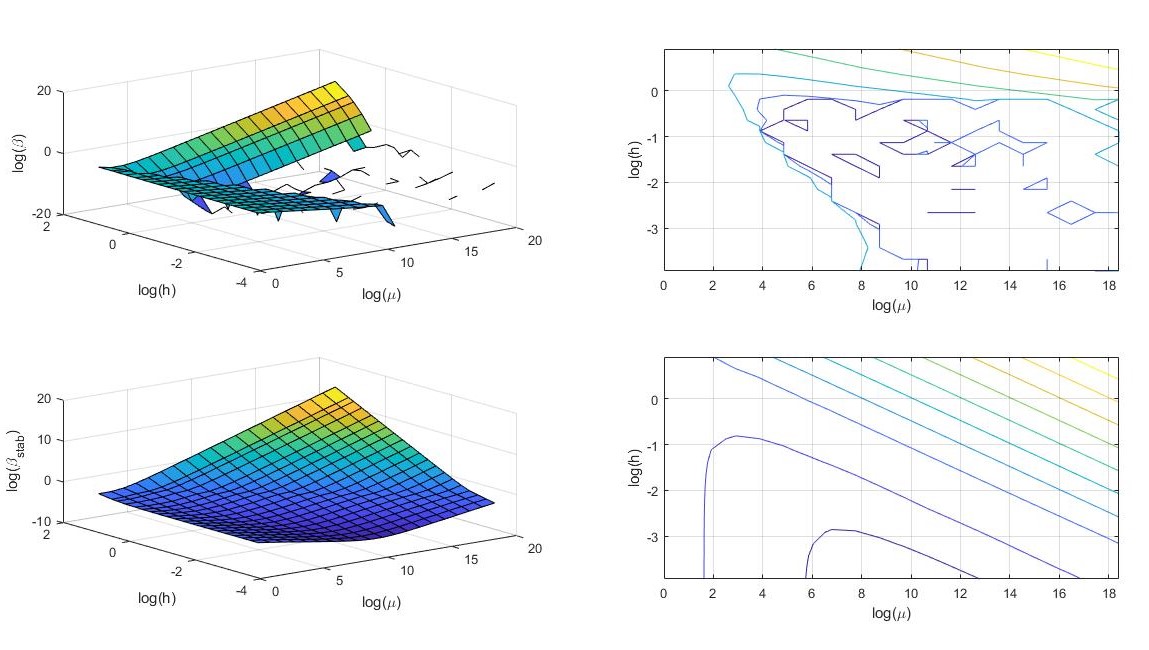}
	\caption{On the left, two three-dimensional plots of $\log(\beta)$ with respect to $(\log(\mu),\log(h))$. On the right, the contour lines of $\log(\beta)$ with respect to $(\log(\mu),\log(h))$. The top two figures correspond to the bilinear form defined in \eqref{bil ode} restricted to the FEM discrete spaces \eqref{fem trial}, \eqref{fem test}, and the bottom two to the stabilised bilinear form \eqref{ah fem}.}
	\label{fig: 3D plot beta stab fem}
\end{figure}

As one can see from Figures \ref{fig: beta stab fem}, \ref{fig: 3D plot beta stab fem}, the behaviour of the discrete inf-sup related to \eqref{ah fem} is uniformly (w.r.t. $h$) bounded. In particular, the red line of Figure \ref{fig: beta stab fem} corresponds to the upper-bound \eqref{stab Linfty} and now, in the stabilized case, it no longer separates two different discrete inf-sup regimes.

As we expect from Remark 4.2.20 of (\cite{Zank2020}), in Figure \ref{fig: beta stab fem} one can note that the optimal discrete inf-sup constant shows a dependency of order $\mu^{-\frac{1}{2}}$, like the discrete inf-sup constant of \eqref{bil ode} in the stability region given by \eqref{stab Linfty}, i.e., defined by
\begin{equation*}
h < \sqrt{\frac{12}{\mu}}.
\end{equation*}

As before, in Figure \ref{fig: beta stab fem} we do not consider the values of the discrete inf-sup that are greater than $10^2$. Indeed these values are not a physical phenomenon, but simply the result of an unsuitable discretization of the problem, as observed in Section \ref{sec infsup fem}. \\ \bigskip

We test O. Steinbach and M. Zank's stabilization also by studying the error committed in the approximation of an exact solution of \eqref{var ode}. As in previous Sections and in (\cite{Steinbach2019, Zank2020}), as a numerical example for the perturbed Galerkin piecewise continuous linear FEM \eqref{perturbed problem}, we consider a uniform discretization of the time interval $(0,T)$ with $T = 10$ and a mesh-size $h = T/N$. For $\mu = 1000$ we consider the strong solution $u(t) = \sin^2\Big(\frac{5}{4}\pi t\Big)$
and we compute the integrals appearing at the right-hand side using high-order integration rules.  

The minimum number of elements chosen is $N = 4$, the maximum number is $N = 32768$, as in Section \ref{sec errors fem}.

\begin{figure}[h!]	
	\centering
	\includegraphics[scale=0.5]{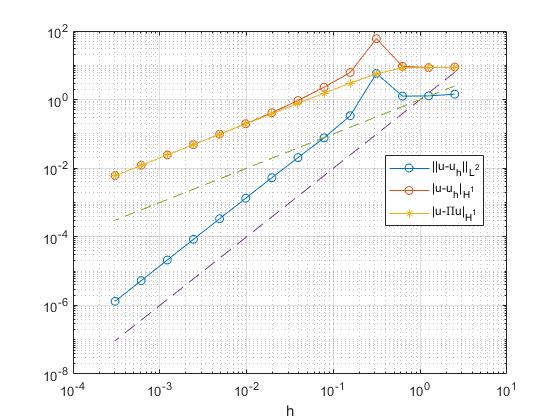}
	\caption{A $\log$-$\log$ plot of errors committed by perturbed piecewise continuous linear FEM \eqref{perturbed problem} in $|\cdot|_{H^1(0,T)}$ seminorm and in $\|\cdot\|_{L^2(0,T)}$ norm, with respect to a uniform mesh-size $h$. Also, the best approximation error in $|\cdot|_{H^1(0,T)}$ seminorm is represented. The square of the wave number is $\mu=1000$.}
	\label{fig: err fem stab}
\end{figure}

\begin{figure}[h!]	
	\centering
	\includegraphics[scale=0.5]{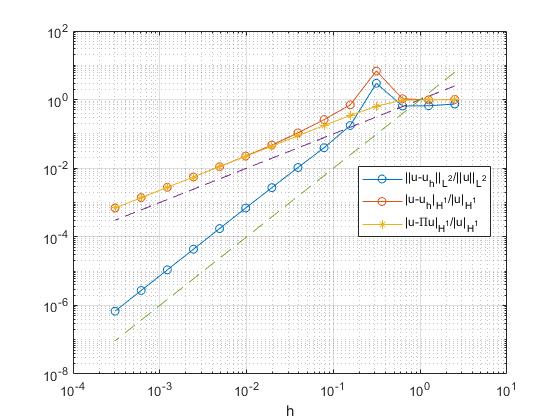}
	\caption{A $\log$-$\log$ plot of relative errors committed by perturbed piecewise continuous linear FEM \eqref{perturbed problem} in $|\cdot|_{H^1(0,T)}$ seminorm and in $\|\cdot\|_{L^2(0,T)}$ norm, with respect to a uniform mesh-size $h$. Also, the best approximation error in $|\cdot|_{H^1(0,T)}$ seminorm is represented. The square of the wave number is $\mu=1000$.}
	\label{fig: rel err fem stab}
\end{figure}

As one can see in Figure \ref{fig: err fem stab}, linear and quadratic convergence, respectively, in $|\cdot|_{H^1(0,T)}$ seminorm and in $\|\cdot\|_{L^2(0,T)}$ norm are confirmed.

\begin{figure}
	\hspace{-1cm}
	\begin{minipage}[h!]{8.5cm}
		\centering
		\includegraphics[width=6.5cm]{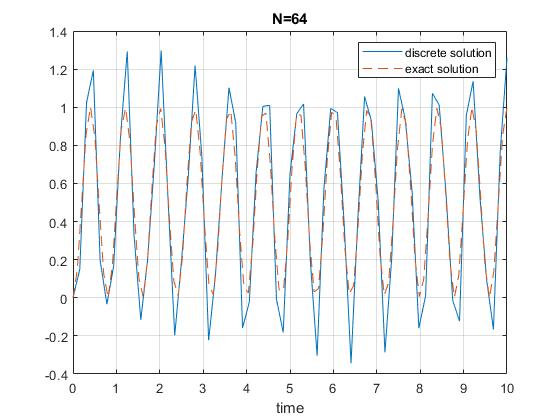}
		\caption{Exact and discrete solutions of the perturbed problem \eqref{perturbed problem} for $\mu = 1000$ and $N = 64$ elements (i.e., $h = 0.1563$).}
	\end{minipage} 
	\hspace{-1cm}
	\begin{minipage}[h!]{9cm}
		\centering
		\includegraphics[width=6.5cm]{Figures/uex_udis_ode_fem_N64}
		\caption{Exact and discrete solutions by unperturbed piecewise linear FEM  for $\mu = 1000$ and $N = 64$ elements (i.e., $h = 0.1563$).}
	\end{minipage}
\end{figure}

\subsubsection{Empirical reason for the unconditional stability of the perturbed FEM discretization}
The following representation holds (Lemma 17.2 of \cite{Steinbach2019})
\begin{equation}\label{rappre ah}
a_h(u_h,v_h)=-\langle \partial_t u_h, \partial_t v_h \rangle _{L^2(0,T)} - \frac{\mu}{12}\sum_{l=1}^{N} h_l^2 \langle \partial_t u_h, \partial_t v_h\rangle_{L^2(\tau_l)} + \mu \langle u_h, v_h \rangle_{L^2(0,T)},
\end{equation}
for all $u_h\in S^1_{h;0,*}$, $v_h \in S^1_{h;*,0}$, where $\tau_l$, for $l=1, \ldots,N$, are the subintervals of $(0,T)$ given by the Galerkin discretization. Note that \eqref{rappre ah} results in
\begin{equation}\label{rappre ah mesh unif}
a_h(u_h,v_h)=-\Bigg(1+\frac{\mu h^2}{12}\Bigg)\langle \partial_t u_h, \partial_t v_h \rangle _{L^2(0,T)} + \mu \langle u_h, v_h \rangle_{L^2(0,T)},
\end{equation}
for all $u_h\in S^1_{h;0,*}$, $v_h \in S^1_{h;*,0}$, in the case of a uniform mesh-size. 

The representation \eqref{rappre ah mesh unif} suggests another justification to the inf-sup stability of the perturbed bilinear form $a_h(\cdot,\cdot)$ in the case of a uniform mesh-size. Indeed, the bilinear form \eqref{rappre ah mesh unif} corresponds to the perturbed problem
\begin{equation*}
\partial_{tt}u(t)+ \mu_h u(t)=f(t), \quad \text{for} \ t \in (0,T), \quad u(0)=\partial_{t}u(t)_{|t=0}=0,
\end{equation*}
where 
\begin{equation*}
\mu_h := \frac{\mu}{1+\frac{\mu h^2}{12}}.
\end{equation*}
Since the mesh-size $h$ always satisfies
\begin{equation*}
h < \sqrt{\frac{12}{\mu_h}} < \sqrt{\frac{12}{\mu}+h^2},
\end{equation*}
the perturbed bilinear form \eqref{ah fem} with a uniform mesh-size is inf-sup stable with an inf-sup value that has a dependency on $\mu_h$ of order $\mu_h^{-\frac{1}{2}}$, as a consequence of the numerical results of Section \ref{sec infsup fem}. In particular, 
\begin{equation*}
 {\mu}^{-\frac{1}{2}} \leq {\mu_h}^{-\frac{1}{2}} \simeq \inf_{0 \neq u_h \in S^1_{h;0,*}} \sup_{0 \neq v_h \in S^1_{h;*,0}} \frac{a_h(u_h, v_h)}{|v_h|_{H^1(0,T)}},
\end{equation*}
for all $u_h\in S^1_{h;0,*}$, $v_h \in S^1_{h;*,0}$. Note that these arguments are not intended to replace the analysis of (\cite{Steinbach2019, Zank2020}), which is more complete. They are meant to give an empirical motivation that explains why the stabilization proposed by O. Steinbach and M. Zank actually works.

\subsection{Stabilization for quadratic isogeometric discretization with maximal regularity}
In this Section we consider three different stabilization techniques. The first two give poor results, whereas the performances of the last one are satisfactory.

Inspired by \eqref{ah fem}, we firstly try to perturb the quadratic isogeometric discretization with maximal regularity in the same way, i.e., defining
\begin{equation}\label{tent 1 iga}
a_h(u_h,v_h):=-\langle \partial_t u_h, \partial_t v_h \rangle _{L^2(0,T)} + \mu \langle u_h, Q_h^0 v_h \rangle_{L^2(0,T)},
\end{equation}
for all $u_h \in V^h_{0,*}$ of \eqref{iga space} and $v_h \in V^h_{*,0}$ of \eqref{iga test space}, and considering the modified discrete problem
\begin{equation}\label{perturbed iga}
\begin{cases}
\text{Find} \ u_h \in V^h_{0,*} \quad \text{such that}\\
a_h(u_h,v_h)=\langle f,v_h \rangle_{(0,T)} \quad  \forall v_h \in  V^h_{*,0}.
\end{cases}
\end{equation}
\begin{oss}
	Another possible perturbation consists of defining
	\begin{equation*}
	a_h(u_h,v_h):=-\langle \partial_t u_h, \partial_t v_h \rangle _{L^2(0,T)} + \mu \langle u_h, Q_h^1 v_h \rangle_{L^2(0,T)},
	\end{equation*}
	for all $u_h \in V^h_{0,*}$ and $v_h \in V^h_{*,0}$, where $Q^1_h: L^2(0,T) \rightarrow S^1_h(0,T)$ denotes the $L^2(0,T)$ orthogonal projection on the piecewise continuous linear finite element space $S^1_h(0,T)$.
\end{oss}

As in previous Sections, we test perturbation \eqref{tent 1 iga} by studying the error committed in the approximation of an exact solution of \eqref{var ode}. In particular, as before, as a numerical example we consider a uniform discretization of the time interval $(0,T)$ with $T = 10$ and a mesh-size $h = T/N$. For $\mu = 1000$ we choose the strong solution $u(t) = \sin^2\Big(\frac{5}{4}\pi t\Big)$
and we compute the integrals appearing at the right-hand side using high-order integration rules. The minimum number of elements chosen is $N = 4$, the maximum number is $N=4096$, as in Section \ref{sec errs iga}.

The results that we obtain are in Figure \ref{fig: err iga pert Qh0}, which are far from promising, since the errors are even larger than those ones made by the conditionally stable method \eqref{iga ode}. A possible explanation is that we are exceeding in the under-integration if compared to the regularity of the test and trial functions we consider.

\begin{figure}[h!]	
	\centering
	\includegraphics[scale=0.5]{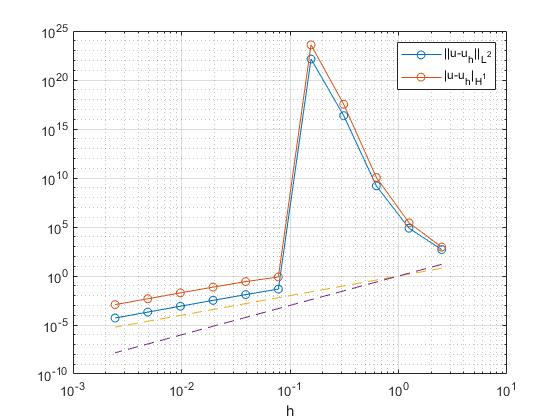}
	\caption{A $\log$-$\log$ plot of errors committed by perturbed quadratic IGA with maximal regularity \eqref{tent 1 iga} in $|\cdot|_{H^1(0,T)}$ seminorm and in $\|\cdot\|_{L^2(0,T)}$ norm, with respect to a uniform mesh-size $h$. The square of the wave number is $\mu=1000$.}
	\label{fig: err iga pert Qh0}
\end{figure}

We then decide to change perspective by considering the perturbation \eqref{rappre ah mesh unif}, inspired by the observations at the end of Section \ref{stab fem}. Since we numerically obtain the stability constraint \eqref{emp constraint}, we define the following perturbation
\begin{equation}\label{tent 2 iga}
a_h(u_h,v_h)=-\Bigg(1+\frac{\mu h^2}{9}\Bigg)\langle \partial_t u_h, \partial_t v_h \rangle _{L^2(0,T)} + \mu \langle u_h, v_h \rangle_{L^2(0,T)},
\end{equation}
for all $u_h \in V^h_{0,*}$ and $v_h \in V^h_{*,0}$, if the mesh-size is uniform. We then test this perturbation by studying the error committed by the discrete solution of \eqref{perturbed iga} considering the perturbed bilinear form \eqref{tent 2 iga}. We choose the same exact solution and assumptions of Figure \ref{fig: err iga pert Qh0}. The results that we achieve are in Figure \ref{fig: err iga pert tent 2} and are quite promising, as we could expect from the empirical analysis at the end of Section \ref{stab fem}. Indeed, bounded errors are a consequence of stability. However, we do not obtain the orders of convergence that we expect for quadratic IGA of maximal regularity, since we only get quadratic convergence in $\|\cdot\|_{L^2(0,T)}$ norm.

\begin{figure}[h!]	
	\centering
	\includegraphics[scale=0.5]{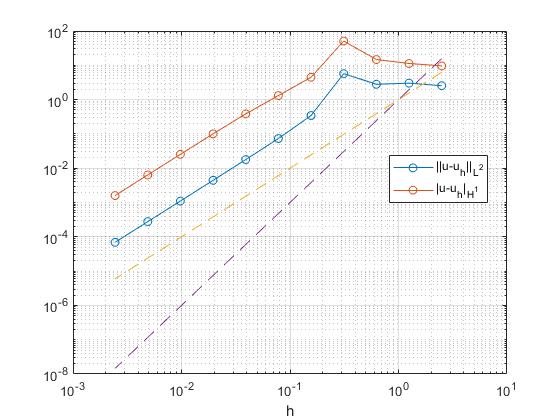}
	\caption{A $\log$-$\log$ plot of errors committed by perturbed quadratic IGA with maximal regularity \eqref{tent 2 iga} in $|\cdot|_{H^1(0,T)}$ seminorm and in $\|\cdot\|_{L^2(0,T)}$ norm, with respect to a uniform mesh-size $h$. The square of the wave number is $\mu=1000$.}
	\label{fig: err iga pert tent 2}
\end{figure}

\bigskip
In order to gain a method that is unconditionally stable whose orders of convergence in $|\cdot|_{H^1(0,T)}$ and $\|\cdot\|_{L^2(0,T)}$ are what we expect for quadratic splines with $C^1(0,T)$ global regularity, we decide to define the following perturbed bilinear form in the case of a uniform mesh-size 
\begin{equation}\label{tent 3 iga}
\begin{split}
a_h(u_h,v_h)=-\langle \partial_t u_h, \partial_t v_h \rangle _{L^2(0,T)} + \mu \langle u_h, &v_h \rangle_{L^2(0,T)} \\
&- \delta \mu h^4\langle \partial_t^2 u_h, \partial_t^2 v_h \rangle _{L^2(0,T)},
\end{split}
\end{equation}
for all $u_h \in V^h_{0,*}$ and $v_h \in V^h_{*,0}$, where $\delta >0$ is a fixed real value. 

Let us note that both the bilinear forms \eqref{tent 2 iga}, \eqref{tent 3 iga} are defined by a non consistent perturbation, since, in general, if $\tilde{u}_h \in V^h_{0,*}$ is a solution of their related perturbed problem \eqref{perturbed iga}, $\tilde{u}_h $ is not a solution of the non perturbed problem \eqref{iga ode}.

The behaviour of \eqref{tent 3 iga} depends on the choice of the coefficient $\delta$. Therefore, we test this perturbation by studying the error committed by the discrete solution of \eqref{perturbed iga} considering the perturbed bilinear form \eqref{tent 3 iga} with different choice of $\delta$. In particular, we consider $\delta \in \{\frac{1}{100},\frac{1}{10},1,10,100\}$ in order to see how the errors behave for different orders of magnitude of the perturbation coefficient. We choose the same exact solution and assumptions of Figures \ref{fig: err iga pert Qh0}, \ref{fig: err iga pert tent 2}. 

Figure \ref{fig: err and rel err iga delta} represents $\log$-$\log$ plots of relative errors committed by perturbed quadratic IGA with maximal regularity \eqref{tent 3 iga} in $|\cdot|_{H^1(0,T)}$ seminorm and in $\|\cdot\|_{L^2(0,T)}$ norm, with respect to a uniform mesh-size $h$. The smallest global errors are obtained for $\delta = \frac{1}{100}$. In particular, they are satisfactory since the errors in $|\cdot|_{H^1(0,T)}$ and $\|\cdot\|_{L^2(0,T)}$ have a maximum that is smaller than $10^2$, but also because we achieve quadratic convergence in $|\cdot|_{H^1(0,T)}$ and cubic convergence in $\|\cdot\|_{L^2(0,T)}$, as we expect from quadratic IGA discretization.

\begin{figure}[h!]	
	\centering
	\includegraphics[scale=0.4]{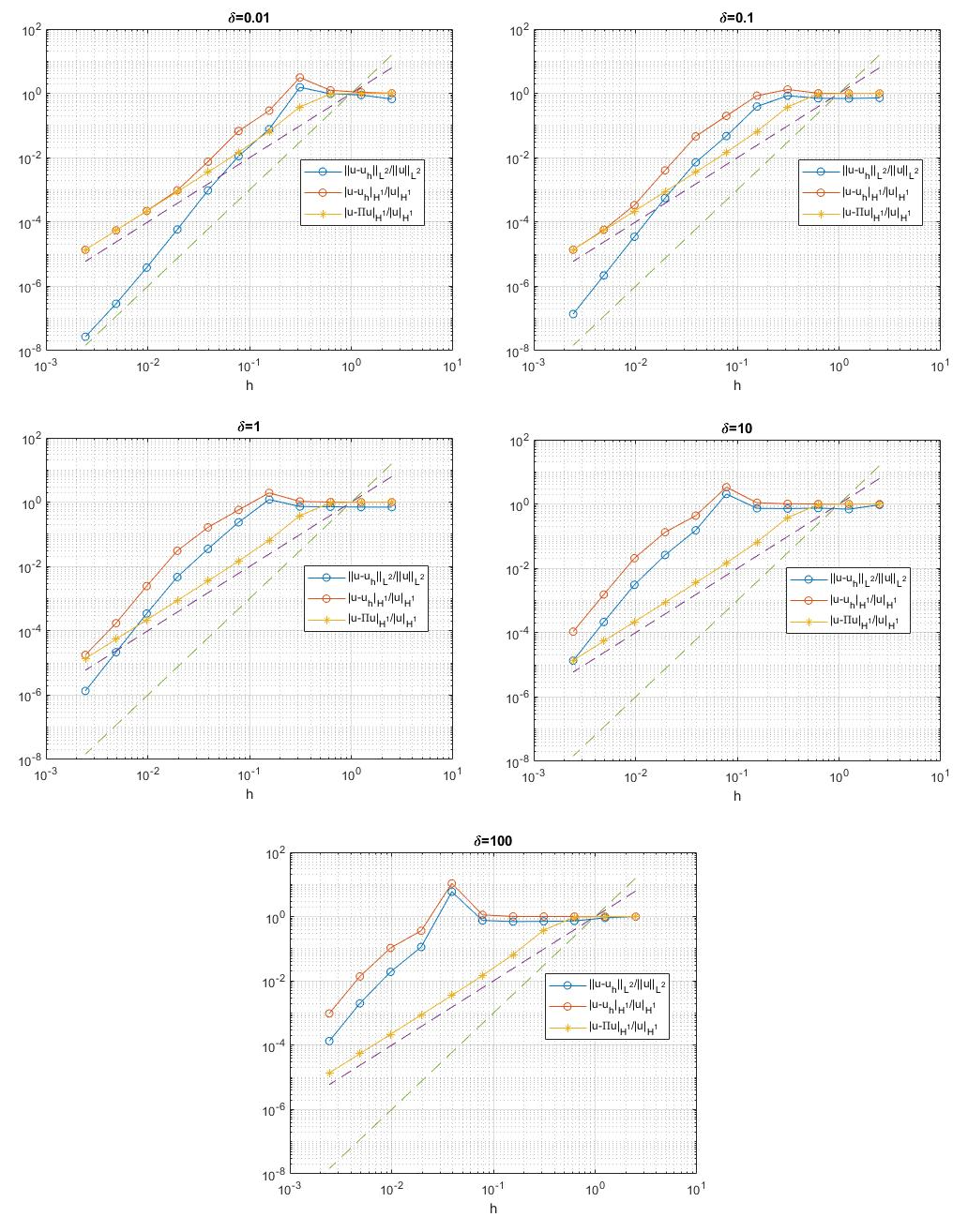}
	\caption{$\log$-$\log$ plots of relative errors committed by perturbed quadratic IGA with maximal regularity \eqref{tent 3 iga} in $|\cdot|_{H^1(0,T)}$ seminorm and in $\|\cdot\|_{L^2(0,T)}$ norm, with respect to a uniform mesh-size $h$. Also, the best approximation error in $|\cdot|_{H^1(0,T)}$ seminorm is represented. The square of the wave number is $\mu=1000$.}
	\label{fig: err and rel err iga delta}
\end{figure}

\begin{figure}
	\hspace{-1.5cm}
	\begin{minipage}[h!]{9cm}
		\centering
		\includegraphics[width=6.5cm]{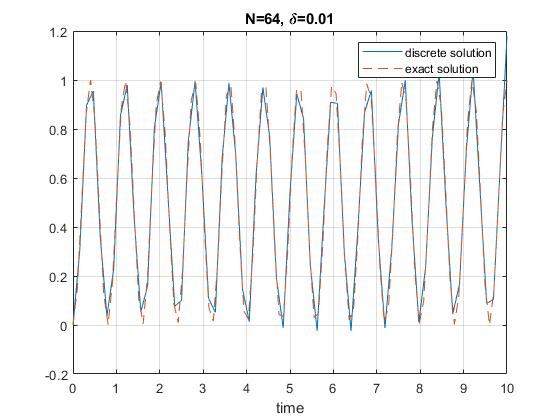}
		\caption{Exact and discrete solutions of the perturbed problem \eqref{perturbed iga} with \eqref{tent 3 iga} and $\delta=\frac{1}{100}$, for $\mu = 1000$ and $N = 64$ elements (i.e., h = 0.1563).}
	\end{minipage} 
	\hspace{-1.5cm}
	\begin{minipage}[h!]{8.5cm}
		\centering
		\includegraphics[width=6.5cm]{Figures/uex_udis_ode_iga_N64}
		\caption{Exact and discrete solutions by unperturbed quadratic IGA for $\mu = 1000$ and $N = 64$ elements (i.e., h = 0.1563).}
	\end{minipage}
\end{figure}

Since the new bilinear form \eqref{tent 3 iga} with $\delta = \frac{1}{100}$ returns significantly small errors that converge with ``the right orders'' of convergence, we are now interested in numerically studying how its inf-sup value behaves with respect to $(\mu,h)$. As before, we fix the final time T=10 and a uniform mesh and we numerically estimate the discrete inf-sup of the bilinear form \eqref{tent 3 iga} by means of Proposition \ref{estimate infsup}.

\begin{figure}[h!]
	\centering
	\includegraphics[scale=0.45]{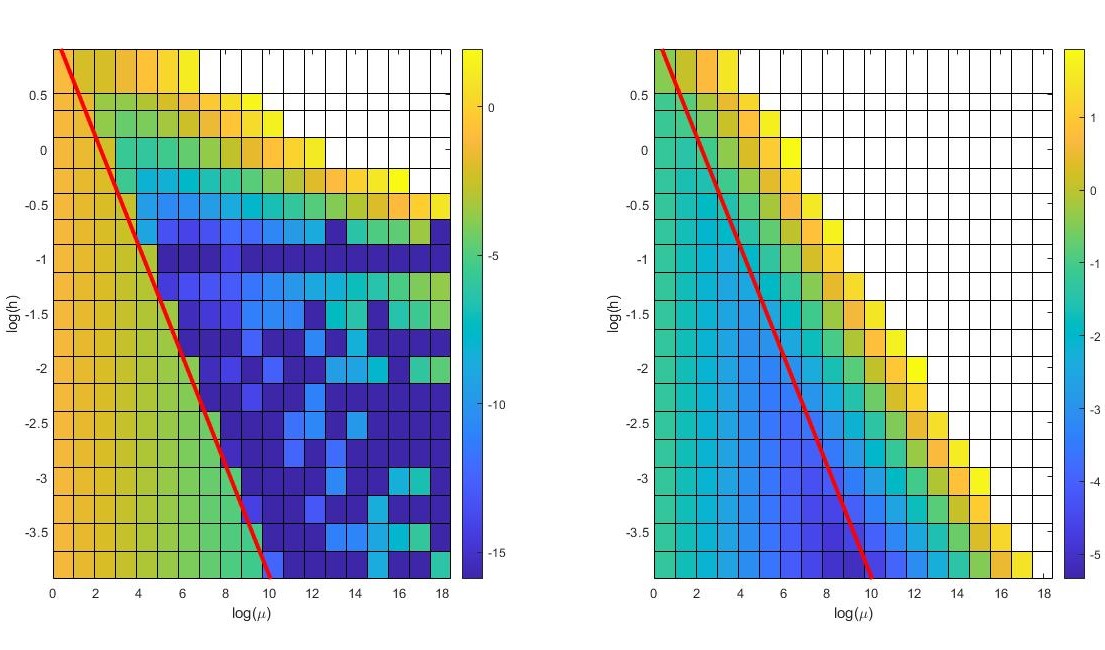}
	\caption{On the left a p-color plot of $\log(\beta)$ with respect to $(\log(\mu),log(h))$, where $\beta$ is the discrete inf-sup value of the bilinear form \eqref{bil ode}. On the right a p-color plot of $\log(\beta)$ with respect to $(\log(\mu),log(h))$, where $\beta$ is the inf-sup value of the bilinear form \eqref{tent 3 iga} with $\delta=\frac{1}{100}$. In both Figures the red line is the natural logarithm of the upper bound \eqref{emp constraint} and the square of the wave number is $\mu=1000$ .}
	\label{fig: beta stab iga}
\end{figure}

\begin{figure}[h!]
	\centering
	\includegraphics[scale=0.45]{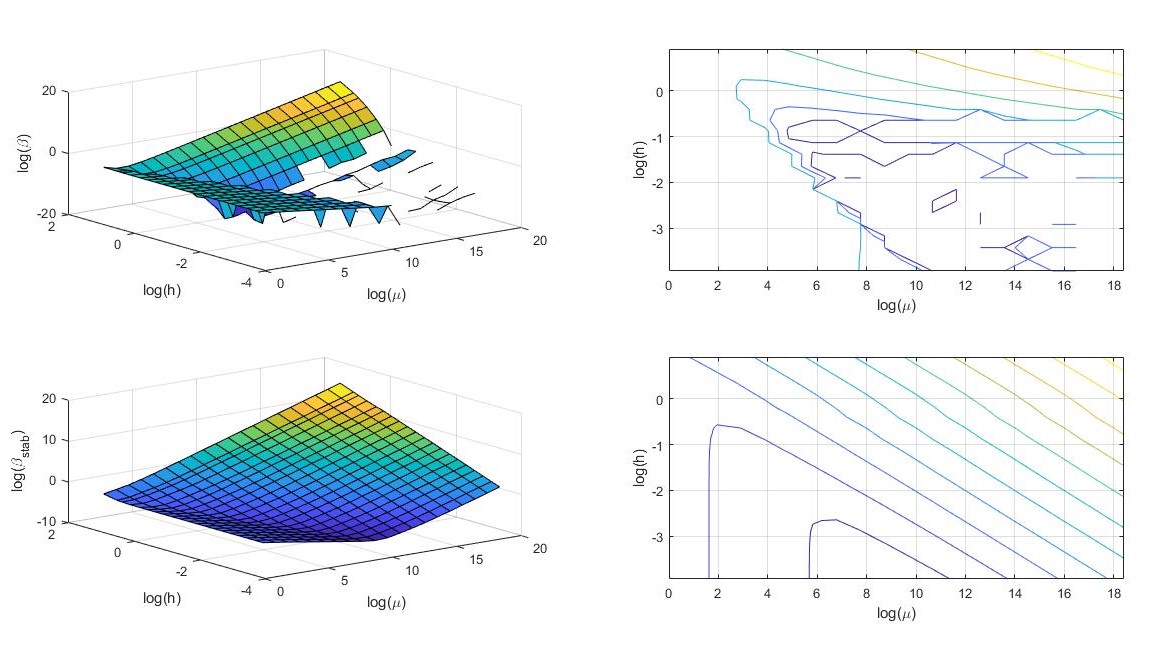}
	\caption{On the left, two three-dimensional plots of $\log(\beta)$ with respect to $(\log(\mu),\log(h))$. On the right, the contour lines of $\log(\beta)$ with respect to $(\log(\mu),\log(h))$. The top two figures correspond to the bilinear form defined in \eqref{bil ode} restricted to the discrete IGA spaces \eqref{iga space}, \eqref{iga test space}, and the bottom two to the stabilised bilinear form \eqref{tent 3 iga} with $\delta=\frac{1}{100}$. The square of the wave number is $\mu=1000$.}
	\label{fig: 3D plot beta stab iga}
\end{figure}

As one can see from Figures \ref{fig: beta stab iga}, \ref{fig: 3D plot beta stab iga}, the behaviour of the discrete inf-sup of \eqref{tent 3 iga} is uniformly (w.r.t. $h$) bounded. In particular, the red line of Figure \ref{fig: beta stab iga} corresponds to the upper-bound \eqref{emp constraint} and now, in the stabilized case, it no longer separates two different discrete inf-sup regimes. 

\begin{figure}[h!]	
	\centering
	\includegraphics[scale=0.4]{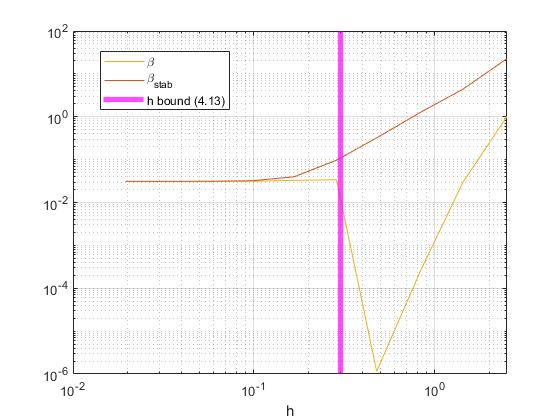}
	\caption{A $\log$-$\log$ plot of the isogeometric inf-sup values of \eqref{bil ode}, \eqref{tent 3 iga} with respect to a uniform mesh-size $h$, for $\delta = \frac{1}{100}$ and a fixed $\mu = 100$.}
	\label{fig: sez infsup iga depend on h}
\end{figure}

Figure \ref{fig: sez infsup iga depend on h} clearly shows the uniformly bounded behaviour (w.r.t. $h$) of the numerically estimated inf-sup of the perturbed bilinear form \eqref{tent 3 iga}.

In Figure \ref{fig: beta stab iga} one can note that the optimal discrete inf-sup constant shows a dependency on $\mu$ of order $\mu^{-\frac{1}{2}}$, as the discrete inf-sup constant of \eqref{bil ode} in the stability region given by \eqref{emp constraint}, i.e., defined by
\begin{equation*}
h < \sqrt{\frac{9}{\mu}}.
\end{equation*}

As before, in Figure \ref{fig: beta stab iga} we do not consider the values of discrete inf-sup that are greater than $10^2$, since they are not a physical phenomenon, but simply the result of an unsuitable discretization of the problem, as observed in Section \ref{sec infsup fem}. 

It might be interesting to further improve the stabilization by choosing the optimal $\delta$ in relation to the empirical stability constraint \eqref{emp constraint}, but the perturbation of order four that we are operating does not allow us to apply the reasoning explained at the end of Section \ref{stab fem}. However, an improvement of $\delta$ could be obtained by writing it as an appropriate function of $\mu$.
 
% Chapter 5

\chapter{Conclusions} % Main chapter title

\label{Chapter5} % For referencing the chapter elsewhere, use \ref{Chapter3} 

%----------------------------------------------------------------------------------------

% Define some commands to keep the formatting separated from the content 

%----------------------------------------------------------------------------------------

The goal of this thesis is to investigate the first steps towards an unconditionally stable space-time isogeometric method with maximal regularity, using a tensor-product approach, for the wave problem \eqref{eqonde}. 

Inspired by the work (\cite{Steinbach2019}), our starting point is studying the conditioned stability of the conforming quadratic IGA discretization with global $C^1(0,T)$ regularity of the related ordinary differential problem \eqref{eq ode}. In Chapter \ref{Chapter3} we hence obtain two explicit upper bounds on the mesh-size, which, if respected, guarantee stability. The first one \eqref{h bound}, i.e., 
\begin{equation*}
h \leq \frac{\pi^2}{\sqrt{2}(2+\sqrt{\mu}T)\mu T},
\end{equation*}
is an extension to quadratic IGA with maximal regularity of Theorem 4.7 of (\cite{Coercive}), which is a result for the piecewise continuous linear FEM discretization. In particular, our upper bound is about twice as large as the FEM one, and the discrete stability constant of \eqref{infsup zank}, i.e.,
\begin{equation*}
\beta_1(\mu,T):=\frac{2 \pi^2}{(2+\sqrt{\mu}T)^2(\pi^2+4\mu T^2)},
\end{equation*}
depends on the coefficient $\mu >0$ and on the final time $T>0$ of the ODE \eqref{eq ode} with the same order as the FEM one. The second upper bound \eqref{h bound gard}, i.e.,
\begin{equation*}
h \leq \frac{\pi^5}{(\pi^2+ 4\mu T^2)[\pi^2+2\mu T^2 (2+\sqrt{\mu} T)]} \sqrt{\frac{2b - \mu T^2}{2b(2+b)\mu}}
\end{equation*}
where $b > \frac{\mu T^2}{2}$ is an arbitrary fixed real value, is obtained by the theory of \textit{Galerkin method applied to G\aa rding-type problems}. 

The asymptotic case, i.e., $\mu$ that is significantly large, seems to us the most interesting situation for the problem of instability (Remark \ref{pollut effect}) and for the wave equation. Thus, in Remark \ref{oss asympt} we compare the obtained bounds and the corresponding stability constants for $\mu \rightarrow \infty$. It follows that, for ``a very large'' $\mu$, the first bound \eqref{h bound} is a weaker constraint than the second bound \eqref{h bound gard}. Moreover, some numerical results of Chapter \ref{Chapter4} show that the latter, with the optimal choice for $b$ \eqref{b}, can be a stronger constraint than the former even if $\mu$ is not extremely large.

Let us define 
\begin{equation*}
C_1(\mu,T):=\frac{1}{\beta_1(\mu,T)},
\end{equation*}
where $\beta_1(\mu,T)$ is the discrete inf-sup constant of \eqref{infsup zank} that we recall above. In Remark \ref{oss asympt} we note that the stability constant of \eqref{stab iga gard}, i.e.,
\begin{equation*}
C_2(\mu,T):=\Bigg[ \frac{3b + \mu T^2\big(\frac{8b}{\pi^2}-\frac{1}{2} \big)}{2b-\mu T^2} \Bigg] \quad \Bigg( \text{for a fixed} \ b > \frac{\mu T^2}{2} \Bigg),
\end{equation*}
that arises from the second upper bound \eqref{h bound gard} has a slower growth than $C_1(\mu,T)$ for $\mu \rightarrow \infty$. Thus, the theory of \textit{Galerkin method applied to G\aa rding-type problems} is useful for our problem leading to a lower bound of the discrete inf-sup that, asymptotically, is sharper than that obtained by extending the analysis of O. Steinbach and M. Zank (\cite{Coercive}). In Chapter \ref{Chapter4} there are also some numerical results showing that the inf-sup $\beta_2(\mu,T):=\frac{1}{C_2(\mu,T)}$ can be sharper than $\beta_1(\mu,T)$ even if $\mu$ is not significantly large.

In Chapter \ref{Chapter4} we observe that the two upper bounds \eqref{h bound}, \eqref{h bound gard} are not optimal. However, if the mesh-size is uniform, we manage to numerically find a stability constraint \eqref{emp constraint}, i.e., 
\begin{equation*}
h <  \sqrt{\frac{9}{\mu}},
\end{equation*}
which, from the numerical results that we obtain, seems to be sharp. The upper bound \eqref{emp constraint} is of the same order (w.r.t $\mu$) of the sharp upper bound \eqref{stab Linfty} for the stability of FEM discretization, i.e., 
\begin{equation*}
h < \sqrt{\frac{12}{\mu}}.
\end{equation*}
 
Quadratic IGA discretization of maximal regularity appears to be advantageous over piecewise continuous linear FEM. Indeed, the stability upper bounds on the mesh size of the former are very similar to the FEM ones, and the orders of convergence of the IGA method are of one order more than the FEM ones. Moreover, from the numerical tests we see that the error committed by the IGA is, for each mesh-size $h$, strictly smaller than the FEM one.\\ \vspace{0.8cm}

As observed in Remark \ref{oss grado più alto}, if we raise the degree of the splines to $p>2$ while keeping maximal regularity, the constraint on the mesh-size \eqref{h bound gard} and the resulting stability constant do not change. On the other hand, how the upper bound \eqref{h bound} behaves in $h$ is not immediately clear from the proof of Theorem \ref{teo stab IGA zank}. However, we are sure that the orders of convergence of the discrete solution to the exact solution will be $p$ in $|\cdot|_{H^1(0,T)}$ and $p+1$ in $\|\cdot\|_{L^2(0,T)}$, if the exact solution is sufficiently regular. Also, we could expect that the maximal error, and more generally for all values of $h$, is strictly smaller than the error committed by continuous, linear FEM and quadratic $C^1(0,T)$ IGA discretizations. This behaviour of the error would be a consequence of high degree of approximation of B-spline technology (\cite{HUGHES20084104, n-width}) and of the ``good behaviour'' of high-order methods with respect to wave propagation problems (\cite{Babuska2000}). These error considerations are indeed confirmed by our numerical tests in Figures \ref{fig: err grad 3}, \ref{fig: rel err grad 3}, \ref{fig: err grad 4}, \ref{fig: rel err grad 4}.

As in Chapter \ref{Chapter4}, as a numerical example for the Galerkin-Petrov finite element methods  we consider a uniform discretization of the time interval $(0,T)$ with $T = 10$ and a mesh-size $h = T/N$. For $\mu = 1000$ we consider the strong solution $u(t) = \sin^2\Big(\frac{5}{4}\pi t\Big)$
and we compute the integrals appearing at the right-hand side using high-order integration rules.

\begin{figure}
	\hspace{-1.3cm}
	\begin{minipage}[h!]{9cm}
		\centering
		\includegraphics[width=6.5cm]{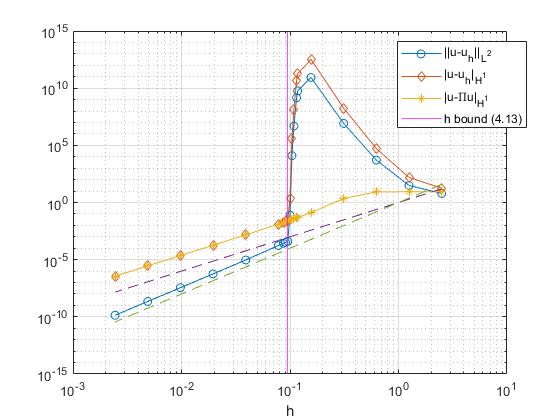}
		\caption{A $\log$-$\log$ plot of errors committed by cubic IGA, with maximal regularity, in \\$|\cdot|_{H^1(0,T)}$ seminorm and in \\$\|\cdot\|_{L^2(0,T)}$ norm, with respect to a uniform mesh-size $h$. Also, the best approximation error in\\ $|\cdot|_{H^1(0,T)}$ seminorm and the bound \eqref{emp constraint} are represented. The square of the wave number is $\mu=1000$}
		\label{fig: err grad 3}
	\end{minipage} 
	\hspace{-1.3cm}
	\begin{minipage}[h!]{9cm}
		\centering
		\includegraphics[width=6.5cm]{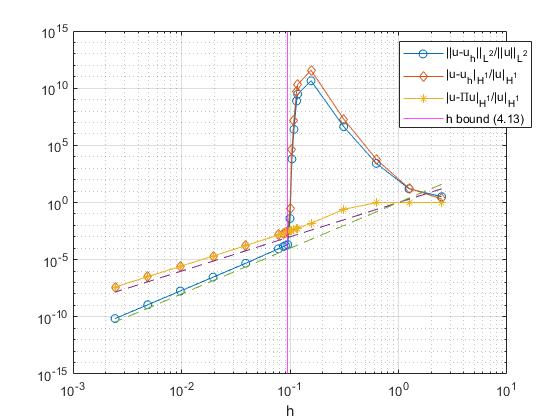}
		\caption{A $\log$-$\log$ plot of relative errors committed by cubic IGA, with maximal regularity, in $|\cdot|_{H^1(0,T)}$ seminorm and in \\$\|\cdot\|_{L^2(0,T)}$ norm, with respect to a uniform mesh-size $h$. Also, the best approximation error in\\ $|\cdot|_{H^1(0,T)}$ seminorm and the bound \eqref{emp constraint} are represented. The square of the wave number is $\mu=1000$}
		\label{fig: rel err grad 3}
	\end{minipage}
\end{figure}

\begin{figure}
	\hspace{-1.3cm}
	\begin{minipage}[h!]{9cm}
		\centering
		\includegraphics[width=6.5cm]{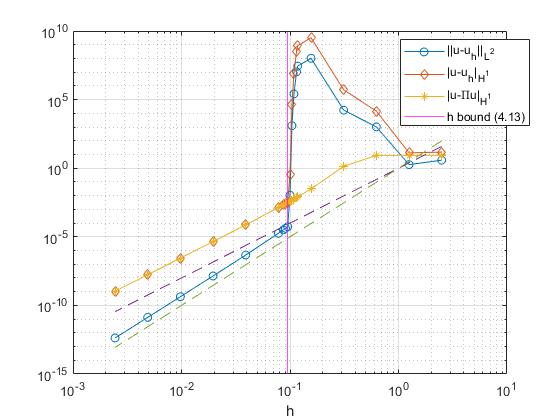}
		\caption{A $\log$-$\log$ plot of errors committed by IGA of fourth degree, with maximal regularity, in $|\cdot|_{H^1(0,T)}$ seminorm and in $\|\cdot\|_{L^2(0,T)}$ norm, with respect to a uniform mesh-size $h$. Also, the best approximation error in $|\cdot|_{H^1(0,T)}$ seminorm and the bound \eqref{emp constraint} are represented. The square of the wave number is $\mu=1000$}
		\label{fig: err grad 4}
	\end{minipage} 
	\hspace{-1.3cm}
	\begin{minipage}[h!]{9cm}
		\centering
		\includegraphics[width=6.5cm]{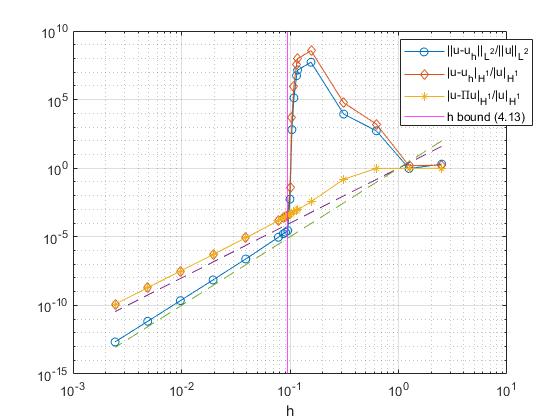}
		\caption{A $\log$-$\log$ plot of relative errors committed by IGA of fourth degree, with maximal regularity, in $|\cdot|_{H^1(0,T)}$ seminorm and in $\|\cdot\|_{L^2(0,T)}$ norm, with respect to a uniform mesh-size $h$. Also, the best approximation error in $|\cdot|_{H^1(0,T)}$ seminorm and the bound \eqref{emp constraint} are represented. The square of the wave number is $\mu=1000$}
		\label{fig: rel err grad 4}
	\end{minipage}
\end{figure}

As one can note in Figures \ref{fig: err grad 3}, \ref{fig: rel err grad 3}, \ref{fig: err grad 4}, \ref{fig: rel err grad 4} the constraint \eqref{emp constraint} seems to remain optimal with respect to the error by raising the degree and regularity of the splines.

Although the errors diminish by raising the degree and regularity of the splines, it is of significant importance to find a method that is stable for every degree and regularity, so that we are not forced to work with dense matrices, which cause a high computational cost. 

Finally, let us note that we expect the IGA discretization of degree $p \geq 2$ and regularity $C^{p-1}(0,T)$ to perform better than the piecewise continuous FEM of degree $p$. Indeed, although the IGA matrices have more non-zero entries, the FEM discretization uses more basis functions. Furthermore, we expect that, while approximating solutions of wave propagation problems, the error committed by the IGA method is smaller than the FEM one (\cite{Babuska2000}).\\ \bigskip

Our proposals of stabilizations are all based on non-consistent perturbations.

In order to stabilize the quadratic IGA with maximal regularity, if the mesh-size is uniform, we propose to consider the perturbed bilinear form \eqref{tent 3 iga}, i.e.,
\begin{equation*}
\begin{split}
a_h(u_h,v_h)=-\langle \partial_t u_h, \partial_t v_h \rangle _{L^2(0,T)} + \mu \langle u_h, &v_h \rangle_{L^2(0,T)} \\
&- \delta \mu h^4\langle \partial_t^2 u_h, \partial_t^2 v_h \rangle _{L^2(0,T)},
\end{split}
\end{equation*}
for all $u_h \in V^h_{0,*}$ and $v_h \in V^h_{*,0}$, where $\delta >0$ is a fixed real value. Our numerical results are very promising in the case of $\delta=\frac{1}{100}$. It would therefore be interesting to study the theory that could explain why this stabilization works and then propose an optimal $\delta$. 

For IGA method with generic polynomial degree $p$ and maximal regularity, we propose to consider the following perturbed bilinear form
\begin{equation}\label{ah grado magg iga}
\begin{split}
a_h(u_h,v_h)=-\langle \partial_t u_h, \partial_t v_h \rangle _{L^2(0,T)} + \mu \langle u_h, &v_h \rangle_{L^2(0,T)} \\
&- \delta \mu h^{2p}\langle \partial_t^p u_h, \partial_t^p v_h \rangle _{L^2(0,T)},
\end{split}
\end{equation}
for all $u_h$ and $v_h$, respectively, in the discrete trial and test spaces, where $\delta >0$ is a fixed real value.

If the mesh-size is non-uniform, we suggest considering
\begin{equation}\label{ah iga mesh generica}
\begin{split}
a_h(u_h,v_h)=-\langle \partial_t u_h, \partial_t v_h \rangle _{L^2(0,T)} + \mu \langle u_h, &v_h \rangle_{L^2(0,T)} \\
&- \delta\mu\sum_{l=1}^{N} h_l^{2p} \langle \partial_t^p u_h, \partial_t^p v_h\rangle_{L^2(\tau_l)},
\end{split}
\end{equation}
for all $u_h$ and $v_h$, respectively, in the discrete trial and test spaces, where $\tau_l$, for $l=1, \ldots,N$, are the subintervals of $(0,T)$ given by the Galerkin discretization and $\delta >0$ is a fixed real value. Actually, \eqref{ah grado magg iga} is a subcase of \eqref{ah iga mesh generica}. 

Finally, let us briefly consider the homogeneous Dirichlet problem for the second-order wave equation \eqref{eqonde}, i.e., 
\begin{equation}
\begin{cases}
\partial_{tt}u(x,t)-\Delta_xu(x,t)=g(x,t) \quad (x,t) \in Q:=\Omega \times (0,T)\\
u(x,t)=0 \quad (x,t) \in \partial{\Omega} \times [0,T]\\
u(x,0)=\partial_tu(x,t)_{|t=0}=0 \quad x \in \Omega,
\end{cases}
\end{equation}
where $\Omega \subset \mathbb{R}^d$, with $d=1,2,3$, is an open bounded Lipschitz domain and, for a real value $T>0$, $(0,T)$ is a time interval. In (\cite{Coercive}) the authors introduce a space-time variational formulation of \eqref{eqonde}, where integration by parts is also applied with respect to the time variable, and the classic anisotropic Sobolev spaces with homogeneous initial and boundary conditions are employed. Inspired by (\cite{Steinbach2019, Zank2020}), a possible unconditionally stable space-time IGA method with maximal regularity based on a tensor-product approach could be obtained by considering the perturbed bilinear form
\begin{equation}\label{onde}
\begin{split}
a_h(u_h,v_h)=-\langle \partial_t u_h, \partial_t v_h \rangle _{L^2(Q)} + &\langle \nabla_x u_h, \nabla_x v_h \rangle_{L^2(Q)} \\
&- \delta\sum_{m=1}^{d} \sum_{l=1}^{N_t} h_l^{2p} \langle \partial_t^p \partial_{x_m} u_h, \partial_t^p \partial_{x_m} v_h\rangle_{L^2(\Omega \times \tau_l)},
\end{split}
\end{equation}
for all $u_h$ and $v_h$, respectively, in the discrete trial and test spaces, where $\tau_l$, for $l=1, \ldots,Nt$, are the subintervals of $(0,T)$ given by the Galerkin discretization and $\delta >0$ is a fixed real value. We expect this stabilization to perform well, given the appreciable numerical results of the perturbation \eqref{tent 3 iga}. However, we have not tested \eqref{onde} yet, since we prefer to give priority to a full analysis of the IGA discretization of our model problem \eqref{var ode}, which, given its link to the wave equation, we expect to be significantly useful.
%\include{Chapters/Chapter6} 

%----------------------------------------------------------------------------------------
%	THESIS CONTENT - APPENDICES
%----------------------------------------------------------------------------------------

\appendix % Cue to tell LaTeX that the following "chapters" are Appendices

% Include the appendices of the thesis as separate files from the Appendices folder
% Uncomment the lines as you write the Appendices

%\include{Appendices/AppendixA}
%\include{Appendices/AppendixB}
%\include{Appendices/AppendixC}

%----------------------------------------------------------------------------------------
%	BIBLIOGRAPHY
%----------------------------------------------------------------------------------------

\printbibliography[heading=bibintoc]

\end{document}